\documentclass[a4paper, 12pt, reqno]{amsart}
\usepackage{graphicx}
\usepackage{amsmath}
\usepackage{amsfonts}
\usepackage{amssymb}
\usepackage{xcolor}
\usepackage{pdfpages}
\usepackage{comment}
\usepackage{enumitem}
\usepackage{quiver}
\usepackage{amsthm}
\usepackage{extarrows}
\usepackage{graphicx}
\usepackage{hyperref}
\usepackage{cleveref}
\usepackage{bm}
\crefname{subsection}{subsection}{subsections}
\Crefname{subsection}{Subsection}{Subsections}
\newtheorem{definition}{Definition}[section]
\newtheorem{thm}{Theorem}[section]
\newtheorem{lem}[thm]{Lemma}%
\newtheorem{cor}[thm]{Corollary}
\newtheorem {remark}[thm]{Remark}
\newtheorem*{claim}{Claim}
\theoremstyle{definition}%
\newcommand{\mt}[2]{\begin{pmatrix} #1 \\ #2 \end{pmatrix}}
\newcommand{\matr}[4]{\begin{pmatrix} #1 & #2 \\ #3 & #4 \end{pmatrix}}
\newcommand{\R}{\mathbb{R}}
\newcommand{\Z}{\mathbb{Z}}
\newcommand{\T}{\mathbb{T}}
\newcommand{\C}{\mathbb{C}}
\newcommand{\N}{\mathbb{N}}
\newcommand{\D}{\mathcal{D}}
\newcommand{\Ll}{\mathcal{L}}
\newcommand{\Cc}{\mathcal{C}}
\newcommand{\lemref}[1]{Lemma~\ref{#1}}
\newcommand{\thmref}[1]{Theorem~\ref{#1}}
\newcommand{\corref}[1]{Corollary~\ref{#1}}

\title[Extreme values for Toral translations]{Extreme values and logarithm laws for toral translations}
\author{Sourav Das}
\address{\textbf{Sourav Das} \\
School of Mathematics,
Tata Institute of Fundamental Research, Mumbai, India 400005}
\email{iamsouravdas1@gmail.com, sourav@math.tifr.res.in}

\author{Anish Ghosh} 
\address{\textbf{Anish Ghosh} \\
School of Mathematics,
Tata Institute of Fundamental Research, Mumbai, India 400005}
\email{ghosh@math.tifr.res.in}

\author{Sundara Narasimhan} 
\address{\textbf{Sundara Narasimhan} \\
School of Mathematics,
Tata Institute of Fundamental Research, Mumbai, India 400005}
\email{ssundara@math.tifr.res.in}

\thanks{All three authors acknowledge support from the Department of Atomic Energy, Government of India (project 12-R\&D-TFR-5.01-0500). A.\ G.\ gratefully acknowledges support from a grant from the Infosys foundation, and a J. C. Bose grant of the ANRF}

\begin{document}
\maketitle

\begin{abstract}
We study extreme values of normalized $r$-th nearest neighbour distance functions in inhomogeneous Diophantine approximation, and prove Weibull and Fr\'echet limit laws for these functions with the shift satisfying a natural Diophantine condition. Using these extreme value laws we prove corresponding logarithm laws for these functions.

Our approach is based on homogeneous dynamics. The key input is to adapt the Poisson approximation strategy of Bj$\ddot{\text{o}}$rklund and Gorodnik (\cite{bjorklund2023poisson}) combined with an effective multiple equidistribution result, which we deduce using results of  Str$\ddot{\text{o}}$mbergsson (\cite{strom}) and of Bj\"{o}rklund and Gorodnik (\cite{BMGA}). We also need the volume estimates of Str\"{o}mbergsson and Venkatesh \cite{stromsmall} as crucial input.
\end{abstract}
\tableofcontents

\section{Introduction}
Let $\T$ denote the torus $\R/\Z$ and let $d(\cdot,\cdot)$ be the distance on $\T$ defined as:$$d(x,y):=\min_{p\in\Z} \{|x-y+p|\}.$$For $T \geq 1$ and $\alpha \in \T$, let $k(T;\alpha)$ denote the function given by $$k(T;\alpha):=T\min\{d(q\alpha, 0): \; 1\leq q\leq T, \; q \in \N\}.$$ Dirichlet's classical theorem in Diophantine approximation gives us that  $k(T;\alpha)\leq 1$ for all $T\geq 1$ and for all $\alpha \in \T$. A result of Davenport and Schmidt \cite{davenport1970dirichlet} strengthens Dirichlet by showing that for almost all $\alpha \in \T$ one has \begin{equation}\label{Davenport-Schmidt}
    \limsup_{T \to \infty} k(T;\alpha)=1.
\end{equation}

 At the same time, a simple consequence of Khintchine's theorem tells us that for almost all $\alpha \in \T$ the inequality  \begin{equation}\label{Khincthine}
    k(T;\alpha)\leq\frac{1}{\log(T)}
\end{equation} holds for infinitely many $T$. Thus, from \cref{Davenport-Schmidt} and \cref{Khincthine} one concludes that the function $k(T;\alpha)$ has big fluctuations.

One is therefore naturally led to the study of the distribution of extreme values of the function $k(T;\alpha)$. This was carried out by Bj\"{o}rklund and Gorodnik in the context of simultaneous Diophantine approximation, i.e. in dimension $\geq 2$ (see \cite[Section 1.3]{bjorklund2023poisson} for more details). They also derived ``logarithm laws" from their extreme value results (see also \cite[Chapter 3]{ouaggag2026contributions}).

We now turn to the more general problem of inhomogeneous Diophantine approximation, i.e., approximating a given $\xi \in \T$ with elements of the form $\{q\alpha\}_{q=1}^\infty$. Given $\alpha, \xi 
\in \T$ with $\xi \neq 0$ and $T \geq 1,$ we define the function $$k_{\xi}(T;\alpha):=T\min\{d(q\alpha, \xi): 0\leq q\leq T;\; q\in \N\}.$$ 
In contrast to the homogeneous setting (i.e. $\xi=0$), an analogue of Dirichlet's theorem fails to hold for inhomogeneous Diophantine approximation (see \cite[Chapter 3, Theorem 3]{cassels1965introduction}). Also, the inhomogeneous analogue of Davenport-Schmidt \cref{Davenport-Schmidt} is strikingly different. By a consequence of \cite[Theorem 1.6]{KWa}, we have for almost every $(\alpha, \xi) \in \T \times\T$ \begin{equation}
    \limsup_{T \to \infty} k_{\xi}(T;\alpha)=\infty.
\end{equation}

However, at the same time, a similar application of the inhomogeneous version of Khintchine's theorem (\cite[Theorem 1]{Sch}) gives us that for a fixed $\xi \in \T$, one has for almost every $\alpha \in \T$
\begin{equation}
    k_{\xi}(T;\alpha)\leq \frac{1}{\log(T)}
\end{equation}
for infinitely many $T$. Indicating that, like $k(T;\alpha)$, the function $k_\xi(T;\alpha)$ also has large fluctuations. So it naturally leads one to investigate the extreme values of $k_\xi(T;\alpha)$ and corresponding logarithm laws, where $\xi$ is fixed. This is the content of our paper.  To state our results we make the following definitions.
\begin{definition}
Given $\alpha, \xi \in \mathbb{T}$ with $\xi \neq 0$, $r \in \N$, and $T \in \mathbb{R}_{\geq r}$, we set  
\begin{equation}
    d^{(r)}_\xi(T;\alpha):= r\text{-th minimum among }  \{d(q\alpha, \xi) : \; 0 \leq q \leq T \}
\end{equation}
and
\begin{equation}k^{(r)}_{\xi}(T;\alpha):= T \cdot d^{(r)}_\xi(T;\alpha).\end{equation}
Clearly $$k_\xi(T;\alpha)=k^{(1)}_{\xi}(T;\alpha)\leq k^{(2)}_{\xi}(T;\alpha)\leq k^{(3)}_{\xi}(T;\alpha)\ldots .$$

For $\xi=0$, take the minima over the set $1\leq q \leq T$ instead of $0\leq 
q \leq T$ for the definition of $d^{(r)}_0(T;\alpha)$ and $k_{0}^{(r)}(T, \alpha)$.
\end{definition}

These functions have been earlier studied by Dolgopyat, Fayad, and Liu  (see \cite[Section 4]{Dmitry}) on the $d$-dimensional torus $\T^d$. In \cite[Theorem 4.7]{Dmitry}, they proved logarithm laws for these functions $k_{\xi}^{(r)}(T;\alpha)$ for almost every pair $(\alpha,\xi)$. In this paper we study the extreme values of these functions and prove corresponding ``logarithm laws", analogous to \cite{bjorklund2023poisson}. We prove that for sufficiently lacunary subsets $\Delta_n$ of $\R_{\geq 1}$ the (appropriately scaled) minima-maxima of the functions $k^{(r)}_\xi(T;\alpha)$ have a limiting distribution. To state the main results of this article we first need the following definitions:
\begin{definition}
    We say that $\xi \in \mathbb T$ is of Diophantine type if there exists $M \geq 2$ and $c>0$ such that
    $$\left|\xi-\frac{p}{q}\right|\geq \frac{c}{q^M} \quad \text{for all } p\in \mathbb Z \text{ and } q \in \mathbb N.$$
\end{definition}

Note that $\xi$ being of Diophantine type is a full measure condition, and the complement is a set of Hausdorff dimension zero. Also, the set of numbers of Diophantine type includes all algebraic irrationals $\pmod{1}$ by Liouville's theorem.

We recall the definitions of Weibull and Fr\'echet distributions.
\begin{definition}
The Weibull distribution with scale parameter $(\lambda>0)$ and shape parameter $(a>0)$, denoted by $\mathrm{Wei}(\lambda,a)$, is the probability distribution whose cumulative distribution function is given by
\[
F(t)=\begin{cases}
     1-e^{-\lambda t^a} & \text{if} \quad t>0 \\
     0 & \text{if} \quad t\leq 0.
     \end{cases} 
\]
\end{definition}

\begin{definition}
    The Fr\'echet distribution with scale parameter ($\lambda>0$) and shape parameter $(a >0 )$, denoted by $\mathrm{Fre}(\lambda, a)$, is the probability distribution whose cumulative distribution function is given by 

    \[
F(t)=\begin{cases}
     e^{-\frac{\lambda}{t^a}} & \text{if} \quad t>0 \\
     0 & \text{if} \quad t\leq0.
     \end{cases} 
\]

\end{definition}
The following are the main results of our article.

\begin{thm}\label{thm:wei}
    Fix $\xi \in \mathbb{T}$ of Diophantine type. Let $\Delta_n$ be a sequence of finite subsets of $[1, \infty)$ such that 
\begin{equation*}
    |\Delta_n| \to \infty, \frac{{\min}_{T \neq T^{'} \in \Delta_n}\{|\log T -\log T'|\}}{\log|\Delta_n|} \to \infty, \frac{{\min}_{T \in \Delta_n}\{\log T\}}{\log|\Delta_n|} \to \infty \,\, \text{ as } n \to \infty.
\end{equation*}
Then the minima functions $$\mathfrak{m}^{(r)}_{\xi}(\Delta_n;\alpha):=\min_{T 
\in \Delta_n}k^{(r)}_\xi(T;\alpha), $$
satisfy 
\begin{equation*}
   |\Delta_n|\mathfrak{m}^{(1)}_\xi(\Delta_n; \cdot)\xLongrightarrow[\textnormal{Leb}]{} \textnormal{Wei}(2, 1)
\end{equation*}
and
\begin{equation*}
    |\Delta_n|^{\frac{1}{2}}\mathfrak{m}^{(r)}_{\xi}(\Delta_n; \cdot) \xLongrightarrow[\textnormal{Leb}]{} \textnormal{Wei}\bigg(\frac{12}{\pi^2}\bigg(\frac{1}{(r-1)^2}-\frac{1}{r^2}\bigg), 2\bigg) \,\, \text{for} \; r\geq2.
\end{equation*}
Also, the maxima function
$$\mathfrak{M}^{(1)}_{\xi}(\Delta_n;\alpha):=\max_{T 
\in \Delta_n}k^{(1)}_\xi(T;\alpha), $$
satisfies
\begin{equation*}
   |\Delta_n|^{-1}\mathfrak{M}^{(1)}_\xi(\Delta_n; \cdot)\xLongrightarrow[\textnormal{Leb}]{} \textnormal{Fre}\left(\frac{1}{\pi^2}, 1\right).
\end{equation*}

\end{thm}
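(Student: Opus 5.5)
Here is the plan. I would pass to the space of affine unimodular lattices $Y=\mathrm{ASL}_2(\R)/\mathrm{ASL}_2(\Z)$ via the Dani correspondence, and then treat the minima and maxima over $\Delta_n$ as counting problems for rare events along a lacunary orbit. For $T\ge 1$ set $a_T=\mathrm{diag}(T^{-1},T)$ and $u_\alpha=\begin{pmatrix}1&\alpha\\0&1\end{pmatrix}$. Consider the affine lattice
\[
\Lambda_{\alpha,\xi}(T)=a_T u_\alpha(\Z^2+(-\xi,0)^{t}),
\]
whose points are $(T(q\alpha-\xi+p),\,q/T)$. Then $k^{(r)}_\xi(T;\alpha)\le s$ says that the lattice has at least $r$ points in the box $\{|x|\le s,\ 0\le y\le 1\}$. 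Write $\chi_{s}$ for the indicator that the number of such points is at least $r$, and $\mathfrak m^{(r)}_\xi(\Delta_n;\alpha)\le s_n$ becomes $\sum_{T\in\Delta_n}\chi_{s_n}(\Lambda_{\alpha,\xi}(T))\ge1$.

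\textbf{Step 1: volume asymptotics.} I would compute the Haar measure of the relevant small-probability sets using the Str\"ombergsson--Venkatesh estimates \cite{stromsmall}.
\begin{itemize}[label=$\circ$]
\item For $r=1$, a random affine lattice meets a box of area $2s$ with probability about $2s$. This gives the scaling $s=t/|\Delta_n|$ and the limit $1-e^{-2t}$.
\item For $r\ge2$, a generic affine lattice has at most one point in a thin box. The event of having $r$ points therefore forces $r$ points on a common line of the lattice, i.e.\ a primitive vector $v$ of length about $1/r$ in the strip. The measure of this event is of order $s^2$, and summing over primitive directions produces $\zeta(2)^{-1}=6/\pi^2$ times the difference $(r-1)^{-2}-r^{-2}$. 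This explains both the exponent $2$ and the normalization $|\Delta_n|^{1/2}$.
\item For the maxima, $k^{(1)}>s$ means that a box of area $2s$ with large $s$ is empty. Its measure decays like $\frac{1}{\pi^2 s}$, again by \cite{stromsmall} for large empty convex sets, which gives $\mathrm{Fre}(1/\pi^2,1)$.
\end{itemize}
I would work with smoothed versions of these indicators, approximating from above and below with Sobolev norms polynomial in $|\Delta_n|$.

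\textbf{Step 2: equidistribution.} I would establish effective equidistribution of the curve $\{a_T u_\alpha(\cdot-\xi):\alpha\in\T\}$, and of its multiple correlations at times $T_1<\dots<T_k$ in $\Delta_n$. Single equidistribution with a power-saving rate should follow from Str\"ombergsson's result \cite{strom}, and this is where the Diophantine type of $\xi$ enters: the rate depends on how well $\xi$ is approximated by rationals. The multiple version I would deduce by the Bj\"orklund--Gorodnik method \cite{BMGA}. Since $\log T_{i+1}-\log T_i$ is large compared to $\log|\Delta_n|$, exponential mixing for $a_T$ makes the joint averages split into products up to errors $|\Delta_n|^{-N}$ for any $N$.

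\textbf{Step 3: Poisson approximation.} I would follow \cite{bjorklund2023poisson} and show that the cumulants of order $k\ge2$ of the counting variable $\sum_{T\in\Delta_n}\chi(\Lambda_{\alpha,\xi}(T))$ tend to $0$. The estimate would use Step 2 together with the separation conditions; the condition $\min\log T/\log|\Delta_n|\to\infty$ absorbs the initial non-equidistributed time. The mean then tends to $2t$, $\lambda t^2$, and $\frac{1}{\pi^2 t}$ respectively. Hence the count converges to a Poisson variable, and $P(\text{count}=0)$ converges to the stated distribution functions.

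\textbf{Main obstacle.} I expect the hardest step to be the uniform effective multiple equidistribution for the inhomogeneous curve, with test functions whose Sobolev norms blow up like a power of $|\Delta_n|$. The error terms must still beat $|\Delta_n|^{k}$ for each cumulant of order $k$. I would handle this by combining Str\"ombergsson's Diophantine-dependent rate with the lacunarity conditions, choosing the smoothing parameter as a small power of $|\Delta_n|$.
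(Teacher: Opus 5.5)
Your overall architecture is the paper's: the Dani correspondence, the Str\"ombergsson--Venkatesh volume formulas, Str\"ombergsson's Diophantine-dependent rate combined with \cite{BMGA} for multiple equidistribution, and a Poisson approximation along $\Delta_n$. However, the Poisson criterion in Step 3 is wrong as stated. For $\mathrm{Poi}(m)$ every cumulant equals $m$. So showing that the cumulants of order $k\ge 2$ of $N_n=\sum_{T\in\Delta_n}\chi(\Lambda_{\alpha,\xi}(T))$ tend to $0$ would give $\mathrm{Var}(N_n)\to 0$, a degenerate limit; that statement is false here and cannot be proved. The correct target, which your Step 2 does deliver, is the factorial-moment criterion of \lemref{Poisson}: $|\Delta_n|^k\,\mathrm{Leb}\bigl(\bigcap_{T\in F'}\{\alpha:\Lambda_{\alpha,\xi}(T)\in A_n\}\bigr)\to m^k$ uniformly over $k$-element sets $F'\subset\Delta_n$ of distinct times. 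Equivalently, the \emph{factorial} cumulants of order $\ge 2$ vanish. The uniformity comes from $w(F')\ge w(F_n)$, exactly as in \thmref{Poi law}.

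Your account of the smoothing also misplaces the difficulty. Under the lacunarity hypotheses the equidistribution error is $e^{-\delta_k w(F_n)}=o(|\Delta_n|^{-N})$ for every $N$. Hence Sobolev norms of size $\delta^{-O(k)}$ are harmless for any polynomial choice of $\delta$. The binding constraint is the other side: the $L^1$ error of the smoothing must be $o(|\Delta_n|^{-1})$. This needs a boundary estimate $\mu(\partial_\varepsilon A_n)\le K_n\varepsilon$ with $K_n$ polynomial in $|\Delta_n|$, which your proposal never states. The paper proves it in \lemref{smooth} via Siegel's formula: a lattice in $\partial_\varepsilon A_n$ has a point in $\partial_{C\varepsilon}E_t$. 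This gives $K_n$ uniform for the minima targets, but $K_n\asymp t^2|\Delta_n|^2$ for the Fr\'echet targets $C^{(0)}_{t|\Delta_n|}$. One must then take $\delta_n=|\Delta_n|^{-(s+2)}$ with $s=2$ in the Fr\'echet case (\thmref{Poi law refined}), a \emph{large} negative power rather than a small one. These Fr\'echet targets are also not relatively compact (they contain cusp neighbourhoods), so you need Str\"ombergsson's theorem for bounded $C^8_b$ test functions, which it does provide.

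Two minor slips. With $a_T=\mathrm{diag}(T^{-1},T)$ the points are $(T^{-1}(q\alpha-\xi+p),\,Tq)$, so you want $\mathrm{diag}(T,T^{-1})$. Your $r\ge 2$ heuristic actually yields $\frac{12}{\pi^2}$ rather than $\frac{6}{\pi^2}$, because $\iint\mathbf{1}\{w,\,w+(r-1)v\in E_s,\ v_2>0\}\,dw\,dv=2s^2/(r-1)^2$.
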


An analogous result can also be derived in the case $\xi=0$. 
\begin{thm} \label{homogeneous}
    Let $\Delta_n$ be a sequence of finite subsets of $[1, \infty)$ such that 
\begin{equation*}
    |\Delta_n| \to \infty, \frac{{\min}_{T \neq T^{'} \in \Delta_n}\{|\log T -\log T'|\}}{\log|\Delta_n|} \to \infty, \frac{{\min}_{T \in \Delta_n}\{\log T\}}{\log|\Delta_n|} \to \infty \,\, \text{ as } n \to \infty.
\end{equation*}
Then the minima functions $$\mathfrak{m}^{(r)}(\Delta_n;\alpha):=\min_{T 
\in \Delta_n}k^{(r)}_0(T;\alpha), $$
satisfy 
\begin{equation*}
   |\Delta_n|\mathfrak{m}^{(r)}(\Delta_n; \cdot)\xLongrightarrow[\textnormal{Leb}]{} \textnormal{Wei}\left(\frac{12}{\pi^2r^2}, 1\right) \; \text{for all} \; r\geq 1. 
\end{equation*}
Also, the maxima function 
$$\mathfrak{M}^{(1)}(\Delta_n;\alpha):=\max_{T 
\in \Delta_n}k^{(1)}_0(T;\alpha),$$
satisfies 
\begin{equation*}
   |\Delta_n|^{1/2}\log(|\Delta_n|)^{1/2}(1-\mathfrak{M}^{(1)}(\Delta_n; \cdot))\xLongrightarrow[\textnormal{Leb}]{} \textnormal{Wei}\left(\frac{3}{\pi^2}, 2\right).
\end{equation*}
\end{thm}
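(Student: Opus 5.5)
The plan is to follow the Poisson approximation strategy of Bj\"orklund and Gorodnik, now in the space of unimodular lattices $X_2=\mathrm{SL}_2(\R)/\mathrm{SL}_2(\Z)$, since $\xi=0$ means there is no affine shift.

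\textbf{Step 1: translate the minima into lattice events.} For $T\ge1$ put $g_T=\mathrm{diag}(T^{-1},T)$ and let $u_\alpha$ be the unipotent element with $u_\alpha\Z^2=\{(q,q\alpha+p)\}$. Then $k^{(r)}_0(T;\alpha)\le s$ holds exactly when the lattice $\Lambda_{T,\alpha}:=g_Tu_\alpha\Z^2$ contains at least $r$ points (counted up to sign) in the region
\[
\Omega_s=\{(x,y): 0<x\le 1,\ |y|\le s\}.
\]
The primitivity issue has to be handled here. For small $s$, the $r$-th point in $\Omega_s$ is typically an integer multiple $jv$, $1\le j\le r$, of a single primitive vector $v$, and all multiples are counted because they correspond to the multiples $jq$. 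This is the source of the factor $1/r^2$. So I would write
\[
\{\mathfrak m^{(r)}(\Delta_n;\alpha)\le t/|\Delta_n|\}=\bigcup_{T\in\Delta_n}\{\Lambda_{T,\alpha}\in E^{(r)}_{t/|\Delta_n|}\},
\]
where $E^{(r)}_s$ is the set of lattices having a primitive vector $v$ with $rv\in\Omega_s$, together with a negligible set of lattices having two independent short vectors in $\Omega_s$. By Siegel's formula, $\mu(E^{(r)}_s)=\frac{1}{\zeta(2)}\mathrm{vol}(\Omega_{s/r}\cap\{x\le 1/r\})\cdot(\text{factor from }\pm)$. This comes out to $\frac{12}{\pi^2}\frac{s}{r^2}$, up to checking the normalisation of $\Omega$, and this fixes the Weibull parameter. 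The independent-vectors set has measure $O(s^2)$, which is negligible after multiplying by $|\Delta_n|$.

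\textbf{Step 2: Poisson approximation.} Smooth the indicators of $E^{(r)}_s$ into functions $f^\pm$ on $X_2$ whose supports lie in the cusp region. These are Siegel transforms of smoothings of $\Omega$, cut off so that they stay bounded with controlled Sobolev norms, at a cost polynomial in $|\Delta_n|$. Then let $N_n(\alpha)=\sum_{T\in\Delta_n}f(g_Tu_\alpha\Z^2)$, and show that the factorial moments of $N_n$ converge to those of $\mathrm{Poisson}(\lambda t)$. The $k$-th moment involves $\int_0^1\prod_i f(g_{T_i}u_\alpha\Z^2)\,d\alpha$. I would apply effective multiple equidistribution of expanding horocycles, the result derived earlier from Str\"ombergsson and Bj\"orklund--Gorodnik. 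The error is $\ll \|f\|^k_{C^\ell}\,e^{-\delta\min(\log T_1,\ \min|\log T_i-\log T_j|)}$, so the main term is $\prod\mu(f)$. The lacunarity hypotheses, namely separation in $\log T$ being $\gg\log|\Delta_n|$ and $\min\log T\gg\log|\Delta_n|$, make this error beat the polynomial growth $|\Delta_n|^{O(k)}$ of the Sobolev norms. I would use a cumulant or moment truncation as in \cite{bjorklund2023poisson} to handle $k$ up to slowly growing orders, or simply fixed $k$, which suffices for the method of moments. Then $\mathbb P(N_n=0)\to e^{-\lambda t}$ gives the Weibull$(12/(\pi^2r^2),1)$ law. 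I expect this step to be the main obstacle: the cusp truncation has to balance the Sobolev blow-up of the Siegel transform against the equidistribution rate, and the sets must be approximated from both sides uniformly.

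\textbf{Step 3: the maximum.} The event $1-\mathfrak M^{(1)}\le \epsilon$ means that for some $T$, $\Lambda_{T,\alpha}$ has no nonzero vector in the open region $\Omega_{1-\epsilon}$. By Minkowski-type geometry, such lattices lie in a neighbourhood of the critical locus, i.e.\ lattices with vectors near the corners of the box. I would compute the Haar measure of this set, $\mu(\{\Lambda:\Lambda\cap\Omega_{1-\epsilon}=\{0\}\})\sim c\,\epsilon^2\log(1/\epsilon)$, by parametrising lattices via a basis adapted to the two boundary points. The $\log$ arises from integrating along the one-parameter family of critical lattices. Setting $\epsilon=t/(|\Delta_n|\log|\Delta_n|)^{1/2}$ gives $|\Delta_n|\mu\to\frac{3}{\pi^2}t^2$. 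I would then run the same Poisson scheme from Step 2 with smooth approximations of this (now compactly supported) set, which is easier, since there are no cusp issues.

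The constant $3/\pi^2$ should be double-checked in this measure computation.
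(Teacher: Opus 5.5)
Your overall strategy is the one the paper uses. You put targets in $G'/\Gamma'$, get Poisson convergence from effective multiple equidistribution of expanding horocycles plus the lacunarity of $\Delta_n$, and read the Weibull laws off $\mathrm{Leb}(N_n=0)$. The minima part goes through as you describe.

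\textbf{The gap is in Step 3.} The maxima statement, with its specific parameter $3/\pi^2$, rests entirely on the sharp asymptotic
\[
\mu'\bigl(\{\Lambda\in G'/\Gamma':\ \Lambda\cap \tilde E_{1-\epsilon}=\{0\}\}\bigr)=\frac{6}{\pi^2}\,\epsilon^2\log(1/\epsilon)\,\bigl(1+o(1)\bigr),\qquad \epsilon\to 0^+ ,
\]
and any centred box of area $4(1-\epsilon)$ gives the same measure. You only assert the shape $c\,\epsilon^2\log(1/\epsilon)$. You leave $c$ undetermined and give no argument that the ratio converges. You then implicitly take $c=6/\pi^2$ when you write $|\Delta_n|\mu\to\frac{3}{\pi^2}t^2$. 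As written, Step 3 yields at best a $\mathrm{Wei}(c/2,2)$ law for an unknown $c$.

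The paper obtains this asymptotic in Lemma~\ref{homouppervol} from the closed formula of Str\"ombergsson--Venkatesh. For $2\le a\le 4$ it reads $f_1^{\mathrm{box},\mathrm{SL}_2}(a)=\frac{12}{\pi^2}(\Psi(a/4)-\log a)+\frac{3}{\pi^2}a+\frac{12}{\pi^2}(2\log 2-1)$, where $\Psi'(u)=(u^{-1}-1)\log|u^{-1}-1|$ and $\Psi(1)=0$. At $a=4-4s$ the terms linear in $s$ cancel and $\Psi(1-s)=-\tfrac12 s^2\log s+O(s^2)$, which gives the display.

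\textbf{A trap in the geometric computation.} If you compute geometrically as you suggest, watch for a double count.
\begin{itemize}
\item The critical lattices of the square are the two circles $\langle(1,0),(a,1)\rangle$ and $\langle(0,1),(1,b)\rangle$, $a,b\in[0,1)$, which meet only at $\Z^2$.
\item Take the box $|x|<1$, $|y|<1-\epsilon$, and write nearby lattices with basis $(1+\alpha_1,\beta_1),(a,1+\beta_2)$. To first order, Haar measure of total mass $\zeta(2)$ is $da\,d\beta_1\,d\beta_2$.
\item Over $a$, the admissible lattices form, to leading order, a triangle of area $\epsilon^2/(2a(1-a))$. So the logarithm comes entirely from the ends $a\to 0,1$, i.e.\ from near $\Z^2$.
\item There, the long tails of these triangles coincide to leading order with the tails attached to the second circle near its ends.
\end{itemize}
The first circle alone already accounts for $\frac{6}{\pi^2}\epsilon^2\log(1/\epsilon)$. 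Adding both circles naively gives the wrong constant $12/\pi^2$.

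\textbf{Steps 1 and 2 differ from the paper but are sound.}

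In Step 1 your direct computation is exact. For $s<1/2$, two independent vectors of $\Omega_s$ span area at most $2s<1$, so your ``negligible set'' is empty. The primitive Siegel formula then gives $\frac{12}{\pi^2}\frac{s}{r^2}$, a self-contained replacement for the citation behind Lemma~\ref{homvol}.

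In Step 2 the obstacle you anticipate does not arise in the paper, which uses neither Siegel transforms nor cusp truncation.
\begin{itemize}
\item Lemma~\ref{smooth homcase} shows, via Siegel's formula on a boundary strip of the box, that $\mu'(\partial_\varepsilon B^{(r)}_t)\ll t^{-1}\varepsilon$.
\item Convolving with a bump on the group (Lemma~\ref{lemma:2}) gives $0\le f^-\le\chi\le f^+\le 1$ with $L^1$-error $\ll t^{-1}\delta$ and $\|f^\pm\|_{\mathcal{C}^k}\ll\delta^{-k}$, however far the set reaches into the cusp.
\item The resulting polynomial loss in $|\Delta_n|$ is absorbed by lacunarity, exactly as you say.
\end{itemize}
Your Siegel-transform smoothing could presumably be made to work with a cusp cut-off, but it is more delicate and buys nothing here.
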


A few remarks are in order:

\medskip
\noindent\textbf{Remarks.}
\begin{enumerate}

\item The lacunarity assumption on the sets $\Delta_n \subset [1, \infty)$ in the above Theorems is same as the one made in \cite{bjorklund2023poisson}. This assumption ensures that one has ``asymptotic $r$-independence" for all $r\geq 2$ when sampled along the sequence $\Delta_n$. This heuristic is captured in the form of a Poisson approximation theorem for shrinking targets (see \thmref{Poi law}). As illustrated in \cite{bjorklund2023poisson}, a result of the above kind implies logarithm laws for the associated functions. We have stated them below as corollaries.

\item The different normalizations and the parameters occuring in the limiting distributions of \thmref{thm:wei} and \thmref{homogeneous} are dictated by small-volume asymptotics of the corresponding shrinking targets. See \cref{sec:vol}, \ref{sec:wei} and \ref{sec:homo} for more details on how the shrinking volume rates affect the limit laws.

\item One could ask for an analogue of the above Theorems for almost every pair $(\alpha, \xi) \in \T^2$. Such a result is true with exactly the same limiting laws as in the above theorems, with the same constants. The reason is that the dynamics one encounters while allowing both $\alpha$ and $\xi$ to vary involves the full two dimensional unstable horospherical orbit, whereby one can directly appeal to effective multiple equidistribution of translates imitating our strategy; we leave the details to the interested reader.
\end{enumerate}

As Corollaries of the above theorems we obtain logarithm laws in the spirit of Sullivan's famous paper \cite{sullivan1982disjoint}, in fact we obtain ``higher" analogues. Higher logarithm laws of similar kind were previously obtained by Dolgopyat, Fayad and Liu \cite{DolgopyatLiu2023}. 
\begin{cor}\label{cor1}
    Fix $\xi \in \T$ of Diophantine type, then for almost every $\alpha \in \T$ we have 
    \begin{equation}\label{cor11}
        \limsup_{T \to \infty}{\frac{-\log(k^{(1)}_{\xi}(T;\alpha))}{\log(\log(T))}}=1
    \end{equation}
    and 
    \begin{equation}\label{cor12}
        \limsup_{T \to \infty}{\frac{-\log(k^{(r)}_{\xi}(T;\alpha))}{\log(\log(T))}}=\frac{1}{2} \quad\text{for all } \; r\geq2.
    \end{equation}
    Moreover, one has for any $\xi \in \T$ of Diophantine type, for almost every $\alpha \in \T$
    \begin{equation}\label{cor13}
        \limsup_{T \to \infty} \frac{\log(k^{(1)}_\xi(T;\alpha))}{\log(\log(T))}=1.
    \end{equation}
\end{cor}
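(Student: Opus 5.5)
We derive \corref{cor1} from \thmref{thm:wei} along the lines of the logarithm-law argument of \cite{bjorklund2023poisson}, combining the distributional limits with a Borel--Cantelli style argument along lacunary sequences. Each of the three statements splits into an upper bound and a lower bound for the $\limsup$. The upper bounds use monotonicity in $T$ together with a first-moment (volume) estimate. The lower bounds use the Weibull/Fr\'echet laws applied to carefully chosen sets $\Delta_n$.

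\textbf{Upper bounds.} For \eqref{cor11}, fix $\epsilon>0$ and set $T_j=2^{j}$. For $T\in[T_j,T_{j+1}]$ we have $k^{(1)}_\xi(T;\alpha)\ge T_j d^{(1)}_\xi(T_{j+1};\alpha)\ge \tfrac12 k^{(1)}_\xi(T_{j+1};\alpha)$, so it suffices to bound $\operatorname{Leb}\{\alpha: k^{(1)}_\xi(T;\alpha)\le (\log T)^{-1-\epsilon}\}$. This is at most the measure of the set of $\alpha$ with some $q\le T$ satisfying $d(q\alpha,\xi)\le T^{-1}(\log T)^{-1-\epsilon}$, which is $\ll (\log T)^{-1-\epsilon}$ by a union bound over $q$. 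Along $T_j=2^j$ these bounds are summable, and Borel--Cantelli gives $\limsup\le 1$. For \eqref{cor12} the same scheme needs $\operatorname{Leb}\{k^{(r)}_\xi(T)\le \delta\}\ll\delta^2$ uniformly in $T$, with $\delta=(\log T)^{-1/2-\epsilon}$, which is again summable. This is exactly the small-volume asymptotic behind the shape parameter $2$ in \thmref{thm:wei}: two distinct $q$ both land close to $\xi$, and their difference gives a good homogeneous approximation. I would take the estimate from the volume computation of \cref{sec:vol} (the Str\"ombergsson--Venkatesh bounds), which I expect to hold uniformly in $T$ for $\xi$ of Diophantine type. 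For \eqref{cor13}, the tail $\operatorname{Leb}\{k^{(1)}_\xi(T)\ge L\}\ll L^{-1}$ (the Fr\'echet tail) together with the comparison $k^{(1)}_\xi(T)\le 2k^{(1)}_\xi(T_{j+1})$ for $T\in[T_j,T_{j+1}]$ gives summability at $L=(\log T)^{1+\epsilon}$. Hence $\limsup\le1$.

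\textbf{Lower bounds.} Here I would use \thmref{thm:wei} directly. Take $\Delta_n=\{e^{n^{3}j}: n^{3}< j\le n^{3}+n\}$, so that $|\Delta_n|=n$, the log-gaps are $n^3$ (and $n^3/\log n\to\infty$), and $\min\log T/\log n\to\infty$. All $T\in\Delta_n$ satisfy $\log\log T\approx 7\log n=7\log|\Delta_n|$ up to $o(\log n)$. Suppose $\limsup<1-\epsilon$ on a set of positive measure $\eta$. Then for large $n$, on a set of measure at least $\eta/2$, every $T\in\Delta_n$ has $k^{(1)}_\xi(T;\alpha)\ge(\log T)^{-1+\epsilon}\ge |\Delta_n|^{-1+\epsilon'}$. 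Consequently $|\Delta_n|\mathfrak m^{(1)}_\xi\ge|\Delta_n|^{\epsilon'}\to\infty$ on measure at least $\eta/2$, which contradicts the Weibull limit, since that limit forces $\operatorname{Leb}\{|\Delta_n|\mathfrak m\ge t\}\to e^{-2t}$ and this tends to $0$ as $t\to\infty$. The exponent matching needs care. A shift $\log\log T\approx c\log|\Delta_n|$ with $c\neq1$ would break it, so I would instead choose $\Delta_n$ with $\log T\asymp |\Delta_n|$ up to subpolynomial factors. For instance, take $\log T\in[m,2m]$ with $m=|\Delta_n|(\log|\Delta_n|)^2$ and log-gaps $(\log|\Delta_n|)^2$; this is admissible because the gap condition only needs the gaps to be $\gg\log|\Delta_n|$ times something tending to infinity. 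Then $\log\log T=\log|\Delta_n|(1+o(1))$. The same argument gives \eqref{cor12} with normalization $|\Delta_n|^{1/2}$, and \eqref{cor13} via the Fr\'echet law for $|\Delta_n|^{-1}\mathfrak M^{(1)}_\xi$. Finally, to upgrade ``not less than $1-\epsilon$ on positive measure'' to an almost-everywhere statement, I would argue directly that the $\limsup$ is at least $1-\epsilon$ almost everywhere. Applying \thmref{thm:wei} to tails of the sequence gives that $\operatorname{Leb}\{\alpha:\text{no }T\in\Delta_n,\ n\ge N,\text{ has small }k\}\to0$, which suffices.

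\textbf{Main obstacle.} The hardest step is the uniform-in-$T$ measure estimate $\operatorname{Leb}\{k^{(r)}_\xi(T)\le\delta\}\ll\delta^2$ for $r\ge2$ in the upper bound of \eqref{cor12}, together with its Fr\'echet-tail analogue for \eqref{cor13}. Both require effective control of the shrinking-target volumes and of the equidistribution error in $T$ for fixed Diophantine $\xi$. The plan is to obtain them from the effective equidistribution and the volume estimates already used to prove \thmref{thm:wei}.
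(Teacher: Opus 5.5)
Your proposal is correct and follows the paper's overall scheme. Upper bounds come from the monotonicity of $d^{(r)}_\xi(T;\alpha)$ in $T$, the sandwich along $T_j=2^j$, and convergence Borel--Cantelli. Lower bounds come from the extreme value laws of \thmref{thm:wei}. The differences are as follows.

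For the upper bound in \eqref{cor11}, the paper uses smooth approximation of $A^{(1)}_t$, \thmref{stt}, and $\mu(A^{(1)}_t)\le 2t$. You replace this by the elementary union bound $\textnormal{Leb}\{\alpha: d(q\alpha,\xi)\le\eta\}=2\eta$ summed over $1\le q\le T$; the $q=0$ term drops out because $d(0,\xi)$ is a fixed positive number. This is simpler and needs no Diophantine condition. However, it does not reach $r\ge 2$ or \eqref{cor13}, where you fall back on the paper's argument.

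In those cases you do not actually need bounds uniform in $T$; summability along $T=2^j$ suffices. That comes from \thmref{stt} applied to smooth approximations at scale polynomial in $1/j$. For $C^{(0)}_L$ with $L\approx j^{1+\epsilon}$ one uses the $K'L^2$-smoothness from \lemref{smooth}, so the $\mathcal{C}^k$ norms stay polynomial in $j$ and are absorbed by $2^{-\delta_1 j}$.

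For the lower bounds in \eqref{cor11} and \eqref{cor12}, the paper simply invokes \lemref{loglawlemma}. You instead reprove the needed special case by contradiction, choosing $\Delta_n$ so that $\log\log T=(1+o(1))\log|\Delta_n|$ on $\Delta_n$. This is the same mechanism the paper itself uses for the lower bound in \eqref{cor13}, with $\Delta_n=\{\theta_n^s\}$ and $\theta_n=l_n^{\log l_n}$. Your second choice of $\Delta_n$ does satisfy \eqref{lac}.

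There are a few minor slips:
\begin{enumerate}
\item For the upper bound in \eqref{cor13}, the correct comparison is $k^{(1)}_\xi(T;\alpha)\le 2k^{(1)}_\xi(T_j;\alpha)$ for $T\in[T_j,T_{j+1}]$, not a comparison with $T_{j+1}$.
\item For your first $\Delta_n$ one has $\log\log T\approx 6\log|\Delta_n|$, not $7\log|\Delta_n|$.
\item The final ``upgrade'' step is superfluous. Your contradiction argument already shows that $\{\alpha:\limsup<1-\epsilon\}$ is null for every $\epsilon>0$.
\end{enumerate}
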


\begin{cor}\label{cor2}
    For almost every $\alpha \in \T$ we have 
    \begin{equation}\label{cor21}
        \limsup_{T \to \infty}{\frac{-\log(k^{(r)}_{0}(T;\alpha))}{\log(\log(T))}}=1 \quad \text{for all} \; r\geq 1.
    \end{equation} Moreover, one has for almost every $\alpha \in \T$
    \begin{equation}\label{cor22}
        \limsup_{T \to \infty}{\frac{-\log(1-k^{(1)}_{0}(T;\alpha))}{\log(\log(T))}}=\frac{1}{2}.
    \end{equation}
\end{cor}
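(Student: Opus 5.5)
The plan is to deduce \corref{cor2} from \thmref{homogeneous} with a standard Borel--Cantelli style argument along lacunary sequences, as in \cite{bjorklund2023poisson}. Each of \eqref{cor21} and \eqref{cor22} splits into an upper bound (``$\le$'') and a lower bound (``$\ge$'') on the $\limsup$.

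\emph{Upper bound in \eqref{cor21}.} Since $k^{(1)}_0\le k^{(r)}_0$, it suffices to treat $r=1$. The plan is to show that for every $\epsilon>0$, almost surely $k^{(1)}_0(T;\alpha)\ge (\log T)^{-1-\epsilon}$ for all large $T$.
\begin{itemize}
\item Take $T_j=2^j$. For $T\in[T_j,T_{j+1}]$, monotonicity of the minimum gives $k_0(T;\alpha)\ge \tfrac12 k_0(T_{j+1};\alpha)\cdot$ (comparable factor). So it is enough to control $k_0(T_j;\alpha)$.
\item The event $\{k_0(T_j;\alpha)<\delta\}$ is contained in the set of $\alpha$ for which some $q\le T_j$ satisfies $\|q\alpha\|<\delta/T_j$.
\item Restricting to $q\in[T_j/K,T_j]$ (a Khintchine-type argument handles small $q$), this set has measure $\ll \delta$.
\item With $\delta=j^{-1-\epsilon}$ the measures are summable, so Borel--Cantelli gives the claim.
\end{itemize}
Equivalently, one can use the single-$T$ tail $\mathrm{Leb}(k^{(r)}_0(T)<\delta)\ll\delta$, which is the volume asymptotic underlying \thmref{homogeneous}.

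\emph{Upper bound in \eqref{cor22}.} The same scheme applies, using the volume estimate $\mathrm{Leb}(1-k^{(1)}_0(T)<\delta)\ll \delta^2\log(1/\delta)$ read off from the normalization in \thmref{homogeneous}. Summing over $T_j=2^j$ with $\delta=j^{-1/2-\epsilon}$ converges. Interpolating between consecutive $T_j$ is more delicate here, because $1-k$ is not monotone in $T$. To handle this I would use a finer geometric grid $T_j=(1+j^{-2})^j$, or argue directly through the continued-fraction description of $k_0$ near $1$. I expect this interpolation to be the main technical nuisance.

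\emph{Lower bounds.} These are the main use of \thmref{homogeneous}. Fix $\epsilon>0$. I would choose blocks $\Delta_n$ with the following properties:
\begin{itemize}
\item $|\Delta_n|=N_n$;
\item the points of $\Delta_n$ are spaced by $(\log N_n)^2$ in $\log T$;
\item every $T\in\Delta_n$ satisfies $\log T\in[N_n^{1+\epsilon'}/2,\,N_n^{1+\epsilon'}]$, so that $\log\log T\approx(1+\epsilon')\log N_n$.
\end{itemize}
These blocks satisfy the lacunarity hypotheses. \thmref{homogeneous} then gives
$$\mathrm{Leb}\bigl(N_n\,\mathfrak m^{(r)}(\Delta_n)\le 1\bigr)\to 1-e^{-12/(\pi^2r^2)}>0.$$
On this event some $T\in\Delta_n$ has $k^{(r)}_0(T)\le N_n^{-1}=(\log T)^{-1/(1+\epsilon')+o(1)}$.

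To upgrade ``positive probability'' to ``almost surely'', I would rescale: applying the theorem with threshold $t_n\to\infty$ slowly, i.e.\ replacing $\Delta_n$ by blocks of size $N_n t_n$, pushes the probability to $1-o(1)$. Hence along a subsequence the events occur for almost every $\alpha$, by Borel--Cantelli applied to the complements after passing to a summable subsequence. Letting $\epsilon'\to0$ gives $\limsup\ge1$. The same argument with the maxima statement, normalization $N^{1/2}(\log N)^{1/2}$, gives $\limsup\ge 1/2$ in \eqref{cor22}.

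The main obstacle is this lower bound. The danger is that $\Delta_n$ must be lacunary enough for \thmref{homogeneous} yet satisfy $\log\log T\sim\log|\Delta_n|$. The third hypothesis of the theorem only requires $\min\log T/\log|\Delta_n|\to\infty$, which is compatible with $\log T\approx N_n^{1+\epsilon'}$. So I expect this to be consistent, but it needs checking carefully.
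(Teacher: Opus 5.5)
\textbf{The upper bound in \eqref{cor22} is a genuine gap, and no interpolation can close it.} The inequality $\limsup\le\frac12$ is false for the $\limsup$ over all $T$. Write $\|x\|=d(x,0)$, let $p_k/q_k$ be the convergents of $\alpha=[0;a_1,a_2,\dots]$, and set $\alpha'_j=[a_j;a_{j+1},\dots]$. For $q_k\le T<q_{k+1}$, the best-approximation property gives $k^{(1)}_0(T;\alpha)=T\|q_k\alpha\|$. Since $\|q_k\alpha\|=(q_{k+1}+q_k/\alpha'_{k+2})^{-1}$, we get
\[
1-k^{(1)}_0(q_{k+1}-1;\alpha)=\frac{1+q_k/\alpha'_{k+2}}{q_{k+1}+q_k/\alpha'_{k+2}}\le\frac{1+q_k}{q_{k+1}}\le\frac{2}{a_{k+1}}.
\]
By Borel--Bernstein, for a.e.\ $\alpha$ one has $a_{k+1}\ge k$ for infinitely many $k$, and $a_{k+1}\le k^2$ for all large $k$. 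Together with $\log q_k\ll k$ (L\'evy), this gives $\log(q_{k+1}-1)\ll k$. Hence $1-k^{(1)}_0(T;\alpha)\ll(\log T)^{-1}$ for infinitely many $T$, so the $\limsup$ in \eqref{cor22} is at least $1$ almost surely.

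The underlying reason is that \lemref{homouppervol} is a fixed-time estimate. At a given $T$, the event $1-k^{(1)}_0(T)\le\delta$ needs two things: $a_{k+1}a_{k+2}\gtrsim\delta^{-1}$, and $T$ within relative distance $\delta$ of $q_{k+1}$. That is where $\delta^2\log(1/\delta)$ comes from. For some $T$ in a dyadic block to achieve it, only the first condition is needed, and that has probability of order $\delta\log(1/\delta)$.

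Your finer-grid repair runs into exactly this. For $T\ge T_j$ one has $k^{(1)}_0(T)\le (T/T_j)\,k^{(1)}_0(T_j)$, so a grid of relative mesh $\eta$ loses an additive $\eta$. This forces $\eta\lesssim\delta$, i.e.\ about $\delta^{-1}$ grid points per dyadic block, and a union bound of order $\delta\log(1/\delta)$ per block. That diverges for $\delta=n^{-1/2-\varepsilon}$ and converges for $\delta=n^{-1-\varepsilon}$, so the correct a.e.\ value over all $T$ is $1$. (Also, $(1+j^{-2})^j\to 1$, so that sequence is not a grid.)

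The paper's own proof has the same hole. Its Borel--Cantelli claim is proved only along $T=2^n$, and the closing ``taking logs'' step tacitly uses an analogue of \eqref{sandwich} that does not exist for $1-k^{(1)}_0$. What these methods establish is the exponent $\frac12$ along $T=2^n$. Your lower bound $\ge\frac12$ is correct but not sharp.

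The rest of your plan is sound. For \eqref{cor21}, your upper bound is more elementary than the paper's: the union bound $\mathrm{Leb}\{\alpha:\exists\,q\le T,\ \|q\alpha\|<\delta/T\}\le 2\delta$ replaces effective equidistribution and \lemref{homvol}, and no restriction on $q$ is needed. Your lower-bound block construction, with the monotone-threshold upgrade to full measure, is a hands-on version of \lemref{loglawlemma}, which the paper simply quotes.
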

A few remarks are in order:

\medskip
\noindent\textbf{Remarks.}
\begin{enumerate}
\item Traditionally, one obtains logarithm laws using (effective) mixing and a Borel-Cantelli argument. It is not immediately clear how to run this argument to derive \corref{cor1} when we are fixing $\xi$. Here the strategy of establishing an extreme value law and deriving the logarithm laws from it is helpful and clarifying, even though it comes at the cost of requiring stronger volume asymptotics and multiple effective equidistribution. This is the approach taken in \cref{loglawsviaevl} to prove \corref{cor1} and \corref{cor2}. 

Nevertheless, one can also adapt the traditional approach to derive logarithm laws in the fixed $\xi$ case, in \cref{loglawwithoutevl} we illustrate a hands-on method to derive \cref{cor11} and \cref{cor12} just using effective double equidistribution (the analogue of mixing). In the direct approach of \cref{loglawwithoutevl}, we show by a careful argument that there exists a $\delta>0$ such that for any sub-interval $[a,b]\subset [0,1]$, the normalised Lebesgue measure of the set of elements in $[a,b]$ satisfying the logarithm law is $>\delta$. This immediately gives that the logarithm law holds a.e. by an application of Lebesgue's density theorem. See \cref{loglawwithoutevl} for more details, it can be read independently of the rest of the paper.

\item \corref{cor1} extends the logarithm law of   Dolgopyat, Fayad, and Liu (\cite[Theorem 4.7(a)]{Dmitry}) in dimension one by fixing the shift parameter $\xi$. 

\item The first part of \corref{cor2} (i.e. \cref{cor21}) is already established in \cite[Theorem 4.7 (b)]{Dmitry} in greater generality (i.e. proved for all $\T^d$). We rederive this result here using \thmref{homogeneous} in \cref{loglawsviaevl}.

\item Note that the distributional laws in \thmref{thm:wei} and \thmref{homogeneous} depend explicitly on $r$, whereas the corresponding logarithm laws in \corref{cor1} and \corref{cor2} is independent of $r$ for all $r \geq 2$.
\end{enumerate}

Extreme value laws arising from homogeneous dynamics have been previously investigated by Kirsebom and Mallahi-Karai ~\cite{KirsebomMallahi2026} for cusp excursions of horocycle flows on the modular surface, by Marklof and Pollicott~\cite{MarklofPollicott2025} for horocycle flows on finite-volume hyperbolic surfaces. For the geodesic flow on the modular surface, see Pollicott ~\cite{pollicott2009limiting}, and \cite{kim2026extremevaluetheoremgeodesic} for quotients of the upper half plane by theta groups.

Turning to Diophantine approximation, in the work of Ghosh, Kirsebom, and Roy~\cite{GKR2021} on extremes of continued-fraction digits the authors use exponential mixing and the Chen--Stein method to get quantitative Poisson approximation and a Fr\'echet law. See also \cite{Kirsebom_2021}. Yet another manifestation of Poissonian statistics in Diophantine approximation and dynamics was recently obtained by Álvarez, Becher, Cesaratto, Mereb, Peres, and Weiss~\cite{PeresWeiss2026}. They prove that almost every expansion in any numeration system carrying an invariant exponentially ($\psi$)-mixing measure is Poisson generic; in particular, this applies to regular continued fractions. There are also related quantitative and extreme value results in Diophantine approximation on spheres, see \cite{AlamGhosh2022, ouaggag2026contributions}. The recent paper \cite{DasGhosh} surveys recent progress in inhomogeneous Diophantine approximation.

The present results are complementary to the central limit theorem of Aggarwal, Das, and Ghosh \cite{AggarwalDasGhosh2026} for inhomogeneous Diophantine approximation with a fixed shift of Diophantine type (see also the central limit theorems of Aggarwal and Ghosh \cite{AG25}, and the quantitative results of \cite{AlamGhoshYu}). Both \cite{AggarwalDasGhosh2026} and this work use an effective multiple equidistribution result of the kind \thmref{multiple}, albeit the theorems and their scope in the two papers are very different.  

\subsection*{Acknowledgements}
We thank Jens Marklof and Andreas Str\"{o}mbergsson for useful discussions. 

\section{Notations and setup}
\subsection{Notations}
Let $G:=\mathrm{ASL}_2(\mathbb{R}):=\mathrm{SL}_2(\mathbb{R}) \ltimes \mathbb{R}^2$, where the group operation in the semi direct product is defined as $$[A, b] \cdot [C,d]=[AC, Ad+b].$$  Set $\Gamma:=\mathrm{ASL}_2(\mathbb{Z})$ to be the subgroup of $G$ with all entries coming from integers, and set $G':=\mathrm{SL}_2(\mathbb{R})$ where it is viewed as a subgroup of $G$ via the embedding $A \mapsto [A, 0]$. Set $\Gamma'$ to be the subgroup $\mathrm{SL}_2(\mathbb{Z})$ of $G'$. One can identify the space of affine unimodular lattices in $\R^2$ with the space $X:=G/\Gamma$ via the association $$[A, b]\Gamma \mapsto A\mathbb{Z}^2+b \subset \R^2.$$

In the above association, the space $G'/\Gamma'$ (an embedded submanifold of $G/\Gamma$) corresponds to the space of unimodular lattices in $\R^2$. Also note that there exists a canonical projection map, denoted by $\pi$, from the space $G/\Gamma$ to the space $G'/\Gamma'$ given by the map $[A,b]\Gamma \mapsto A\Gamma'$. 

It is a standard fact that $\Gamma$ is a lattice in $G$ and $\Gamma'$ is a lattice in $G'$. We denote the unique $G$-invariant probability measure on $G/\Gamma$ by $\mu$ and the unique $ G'$-invariant probability measure on $G'/\Gamma'$ by $\mu'$. We set  $X:=G/\Gamma$ and $X':=G'/\Gamma'$.

For $T\in \R_{>0}$, let $a_T$ be the element of $G' \subset G$ given by $$a_T:= \left[\matr{T}{0}{0}{\frac{1}{T}}, \mt{0}{0}\right]$$ and for $x \in \R$, define $u(x)$ to be the element of $G' \subset G$ given by $$u(x):=\left[\matr1x01, \mt00 \right].$$

Let $A\subset G$ be the subgroup $A:=\{a_T: T>0\}$ of $G' \subset G$, and let $A^{+} \subset A$ be the semi-group consisting of elements $\{a_T: T\geq1\}$. Also set $U$ be the group $U:=\{u(x):x \in \R\}$. At times we also denote the coset $\bigg[1_2, \mt{\xi}{0}\bigg]\Gamma \in G/\Gamma$ by $[1_2, \bm{\xi}]\Gamma$ for the sake of notational brevity (i.e. set $\bm{\xi}=\mt{\xi}{0}$). 

\subsection{Method of proof}
Our approach uses homogeneous dynamics. Namely, we reduce the Diophantine problem to the study of expanding translates of certain unipotent orbits on the moduli space of unimodular affine lattices in two dimensions.  In our setup we deal with a significantly delicate situation because we do not have the full horospherical group any more but a one dimensional slice of it, and the equidistribution of the slice is more involved. Already the plain equidistribution statement requires Ratner's measure classification result. See \cref{stromberg} for related discussion. The Diophantine condition on $\xi$ supplies the quantitative control required for effective equidistribution. From this single effective equidistribution statement we derive a multiple effective equidistribution statment, which we use to prove a Poisson approximation theorem for shrinking subsets of $G/\Gamma$. This in turn proves the extreme value laws.

\subsection{Structure of the paper} The remainder of this paper is structured as follows. We begin in \cref{sec:vol} by computing the essential measure estimates for our target sets in $G/\Gamma$ and $G'/\Gamma'$, which we then regularize in \cref{sec:smooth_appro} by constructing smooth approximations to these target sets. Section \ref{sec:effe_equi} is devoted to establish our core dynamical tool, i.e., the effective multiple equidistribution result. In \cref{sec:poi} we establish a Poisson limit law for shrinking targets on $G/\Gamma$ using the method of moments. In \cref{sec:homo} we sketch a method to derive \thmref{homogeneous}. Finally in \cref{sec:loglaw} we establish \corref{cor1} and \corref{cor2}. 

\section{Volume estimates}\label{sec:vol}
In this section we estimate the growth of the volumes of certain sets in $G/\Gamma$ and $G'/\Gamma'$. We use the volume computations done in \cite[Section 8]{stromsmall} to deduce the volume asymptotics in most cases. These asymptotics will be used crucially later.
\subsection{Volumes in $G/\Gamma$}\label{inhomovol}

For $t\geq0$, consider the region $E_t \subset \R^2$ given by \begin{equation}\label{eucregion}
E_t:=\left\{ {\mt{x}{y}} \in \R^2: |x|\leq t, \;  0 \leq y\leq 1 \right\}.
\end{equation} 

Given $E_t \subset \R^2$ as above and $r \in \mathbb{N}$ we define the regions $A_t^{(r)} \subset G/\Gamma$ by 
\begin{equation}\label{regions}
A_t^{(r)}:=\{ \mathcal{L} \in G/\Gamma: \#(\mathcal{L} \cap E_t) \geq r \}.
\end{equation}   

\begin{lem}\label{r=1 vol}The volume of the set $A_t^{(1)}$ satisfies:
    $$2t(1-2t)\leq\mu(A_t^{(1)}) \leq 2t$$ for all $0 \leq t \leq \frac{1}{2}$.\
\end{lem}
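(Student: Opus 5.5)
Upper bound: by the Siegel mean value theorem for affine lattices, $\int_{G/\Gamma}\#(\mathcal L\cap E)\,d\mu(\mathcal L)=\mathrm{vol}(E)$ for any bounded measurable $E\subset\R^2$, since the pushforward of the translation-averaged counting function is Lebesgue measure. Here $\mathrm{vol}(E_t)=2t$. Then
$$\mu(A_t^{(1)})=\mu\{\#(\mathcal L\cap E_t)\ge 1\}\le \int \#(\mathcal L\cap E_t)\,d\mu = 2t.$$
To keep this self-contained I would derive the mean value formula directly. Write $\mathcal L=A\Z^2+b$ and fiber $G/\Gamma$ over $G'/\Gamma'$ with torus fibers $\R^2/A\Z^2$. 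For fixed $A$, the average over $b$ of $\sum_{v\in\Z^2}\chi_E(Av+b)$ is exactly $\mathrm{vol}(E)$, by unfolding the fundamental domain.

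Lower bound: use the Bonferroni-type inequality
$$\mu(N\ge1)\ge \int N\,d\mu-\int \binom{N}{2}\,d\mu,\qquad N=\#(\mathcal L\cap E_t).$$
This holds because $\mathbf 1_{N\ge1}\ge N-\binom N2$ pointwise for integer $N\ge0$. It then suffices to show $\int N(N-1)\,d\mu\le 8t^2$, i.e. $\int\binom N2\,d\mu\le (2t)^2=4t^2$, which gives $\mu(A_t^{(1)})\ge 2t-4t^2=2t(1-2t)$.

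The second moment is the main step. Pairs of distinct points $Av+b,\,Av'+b$ in $E_t$ correspond to a nonzero lattice vector $w=A(v'-v)$ with $p,\,p+w\in E_t$. After averaging over $b$ exactly as before,
$$\int N(N-1)\,d\mu=\int_{G'/\Gamma'}\sum_{0\ne w\in A\Z^2}\mathrm{vol}\big(E_t\cap(E_t-w)\big)\,d\mu'(A).$$
Applying the Siegel formula on $G'/\Gamma'$ to the function $f(w)=\mathrm{vol}(E_t\cap(E_t-w))$ gives
$$\int N(N-1)\,d\mu=\int_{\R^2} f\,dw=\mathrm{vol}(E_t)^2=4t^2.$$
The primitive-vector normalisation introduces a factor $\zeta(2)^{-1}$ on primitive vectors, but summing over all nonzero vectors gives exactly Lebesgue measure. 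Hence $\int\binom N2\,d\mu=2t^2$, which yields the stronger bound $\mu\ge 2t-2t^2\ge 2t(1-2t)$.

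The obstacle is justifying the Siegel formula over all nonzero vectors, and the integrability of $N^2$. If $N^2$ is not integrable, the argument breaks down. In dimension 2 the Siegel transform of a bounded compactly supported function lies in $L^2$ only borderline, since it behaves like $\alpha(\mathcal L)$ with the cusp measure $\asymp \alpha^{-3}$. The computation above avoids squaring the Siegel transform, because it only uses a linear Siegel formula applied to the difference set. It remains to check that $f$ is bounded and compactly supported, with $\int f=4t^2$ by Fubini. The hypothesis $t\le\frac12$ is only needed so that the lower bound is meaningful.
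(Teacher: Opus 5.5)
Your proof is correct. The upper bound is exactly the paper's argument via Siegel's mean value formula. For the lower bound you take a different route in two respects.

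First, the paper cites Rogers' second moment formula for affine lattices, $\int \hat{\chi}_{E_t}^2\,d\mu=\mathrm{Vol}(E_t)^2+\mathrm{Vol}(E_t)$. You instead derive the equivalent identity $\int N(N-1)\,d\mu=\mathrm{Vol}(E_t)^2$ yourself. You average over the torus fibres and then apply the linear Siegel formula on $G'/\Gamma'$ to the autocorrelation $w\mapsto \mathrm{vol}(E_t\cap(E_t-w))$. Since every term is nonnegative, Tonelli justifies all the interchanges and shows $N\in L^2(\mu)$ as a by-product. So the integrability worry you raise is not an obstacle here; it is a genuine issue for the homogeneous Siegel transform on $G'/\Gamma'$, not the affine one.

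Second, the paper converts the second moment into a lower bound via Cauchy--Schwarz (a Paley--Zygmund argument), $\mu(A_t^{(1)})\geq V^2/(V^2+V)=V/(1+V)$ with $V=2t$. You use the pointwise Bonferroni inequality $\mathbf{1}_{N\geq 1}\geq N-\binom{N}{2}$ and obtain $V-V^2/2=2t-2t^2$.

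On the relevant range $V\leq 1$ your bound is the sharper one, since $(V-V^2/2)(1+V)=V+V^2(1-V)/2\geq V$. Its quadratic coefficient $-2$ is also closer to the exact Str\"ombergsson--Venkatesh value $\mu(A_t^{(1)})=2t-\frac{12}{\pi^2}t^2$ than the $-4$ implicit in $2t/(1+2t)$. In exchange, the Cauchy--Schwarz bound stays nontrivial for all $t$: it gives $\mu(C_t^{(0)})\leq 1/(1+2t)$, the correct order of decay for large $t$, which Bonferroni cannot give. For the lemma as stated, both routes suffice.
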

\begin{proof}(i) For the upper bound note that $$\mu(A_t^{(1)})\leq \int_{G/\Gamma} \hat{\chi}_{E_t}(\mathcal{L})d\mu\overset{\text{Siegel}}{=}\int_{\R^2}\chi_{E_t} d\lambda=\textnormal{Vol}(E_t)=2t.$$ To show the lower bound recall that Rogers' second moment formula for $X=G/\Gamma$ (see appendix B of \cite{BMV}) gives us $$\int_{G/\Gamma}\hat{\chi}^2_{E_t} (\mathcal{L})d\mu(\mathcal{L})=\textnormal{Vol}(E_t)^2+\textnormal{Vol}(E_t).$$

Applying Cauchy-Schwarz inequality we have $$\Bigg(\int_{G/\Gamma}\hat{\chi}_{E_t} d\mu\Bigg)^2 =\Bigg(\int_{G/\Gamma} \hat{\chi}_{E_t}\cdot \chi_{A_t^{(1)}} d\mu \Bigg)^2\leq \int_{G/\Gamma} \hat{\chi}^2_{E_t}d \mu \int_{G/\Gamma} \chi^{2}_{A_t^{(1)}} d\mu.$$
Concluding that $\mu(A_t^{(1)}) \geq \dfrac{\textnormal{Vol}(E_t)^2}{\textnormal{Vol}(E_t)^2+\textnormal{Vol}(E_t)}=\dfrac{2t}{1+2t}\geq2t(1-2t)$ (since for any $x\geq 0$ we have $\frac{1}{1+x}\geq1-x$).
\end{proof}
The above method of directly using the Siegel and Rogers formulas alone seems insufficient to obtain the correct bounds for $\mu(A_t^{(r)})$ when $t$ is small and $r\geq 2$. However, thankfully, volumes of the sets $A_t^{(r)} \subset G/\Gamma$ have been computed explicitly by Str\"ombergsson and Venkatesh in \cite[Section 8, Proposition 2]{stromsmall}. Their explicit computation will give us the small volume asymptotics we will need. We record their result here in the form needed by us:
\begin{lem}\label{r>=2 vol}
For $r \geq 2$ for all $0\leq t \leq\frac{1}{2}$

$$\mu(A^{(r)}_t)=\frac{12}{\pi^2}\bigg(\frac{1}{(r-1)^2}-\frac{1}{r^2}\bigg)t^2.$$

\end{lem}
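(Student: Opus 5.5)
The idea is to unfold $\mu(A^{(r)}_t)$ over the primitive vector that carries the lattice points in $E_t$. This reduces the computation to a Siegel-type integral over $\R^2$ of an explicit elementary function, with normalising constant $1/\zeta(2)=6/\pi^2$. This is how I expect the computation of \cite[Section 8]{stromsmall} to go, specialised to our rectangle. Everything below is a plan; the constants are heuristic until the integral in the last step is actually evaluated.

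\textbf{Step 1: collinearity and a canonical step vector.} Let $\mathcal{L}=A\Z^2+b$ with $\#(\mathcal{L}\cap E_t)\ge 2$, and suppose $\mathcal{L}\cap E_t$ contained three non-collinear points. They would span a triangle of area $\ge \tfrac12$. But a triangle inside the convex set $E_t$ has area at most $\tfrac12\mathrm{Vol}(E_t)=t\le \tfrac12$, with equality only on a null set. Hence, off a $\mu$-null set, $\mathcal{L}\cap E_t$ is a set of consecutive points of an arithmetic progression $p,p+v,\dots,p+(k-1)v$. Here $v$ is a primitive vector of $A\Z^2$, uniquely determined up to sign; we normalise it by $v=(v_1,v_2)$ with $v_2>0$, which is possible since $|v_1|\le 2t\le 1$ and $v_2\neq0$ almost surely.

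\textbf{Step 2: unfolding.} Write $\chi_{A^{(r)}_t}(\mathcal{L})=\sum_{v} F_r(\mathcal{L},v)$, where the sum runs over primitive $v\in A\Z^2$ with $v_2>0$, and $F_r(\mathcal{L},v)$ is the indicator that $\mathcal{L}$ has at least $r$ points of $E_t$ on a single line in direction $v$. By Step 1, at most one term in the sum is nonzero. Next, fibre $G/\Gamma$ over $G'/\Gamma'$; the fibre is the torus $\R^2/A\Z^2$ of translates. The translate $b$ decomposes into two coordinates: the offset across the parallel lines in direction $v$, which are spaced $1/|v|$ apart, and the position along one period $v$. Applying the standard unfolding identity $\int_{X'}\sum_{v\text{ prim}} f(v)\,d\mu'=\frac{1}{\zeta(2)}\int_{\R^2}f$ gives
\begin{equation*}
\mu(A^{(r)}_t)=\frac{6}{\pi^2}\int_{v_2>0}\Phi_r(v)\,dv .
\end{equation*}
Here $\Phi_r(v)$ is the Lebesgue measure of pairs (line offset $s$, position along the period) for which the line $\{s+\tau v\}$ meets $E_t$ in a segment containing at least $r$ progression points. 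A short one-dimensional computation should give
\begin{equation*}
\Phi_r(v)=\int \big(\ell_v(s)-(r-1)|v|\big)_+\,\frac{ds}{|v|}\cdot\frac{1}{|v|}\cdot|v| ,
\end{equation*}
where $\ell_v(s)$ is the length of the chord of $E_t$ cut by the line. Concretely, $\Phi_r(v)$ is the area of $E_t\cap(E_t-(r-1)v)$ (a Minkowski-type overlap), up to the normalisation of the fibre.

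\textbf{Step 3: the integral.} Next we need $\int_{v_2>0}\mathrm{Vol}\big(E_t\cap(E_t-(r-1)v)\big)\,dv$. Substituting $w=(r-1)v$ gives
\begin{equation*}
\frac{1}{(r-1)^2}\int_{w_2>0}\mathrm{Vol}\big(E_t\cap(E_t-w)\big)\,dw=\frac{1}{(r-1)^2}\cdot\tfrac12\mathrm{Vol}(E_t)^2=\frac{2t^2}{(r-1)^2}.
\end{equation*}
This is exactly the $1/(r-1)^2$ scaling we want. The difference $\frac{1}{(r-1)^2}-\frac{1}{r^2}$ must then come from using the exact overlap structure: I expect $\Phi_r$ as computed in Step 2 to be the difference of consecutive overlap terms indexed by $r-1$ and $r$. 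After the same substitution these produce $\frac{2t^2}{(r-1)^2}-\frac{2t^2}{r^2}$, and multiplying by $6/\pi^2$ yields the claimed constant $\frac{12}{\pi^2}$.

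\textbf{Main obstacle.} The delicate step is Step 2: fixing the exact normalisation of the fibre measure and the exact form of $\Phi_r$. In particular I must get the "at least $r$" bookkeeping right, so that the overlap terms telescope to $\frac{1}{(r-1)^2}-\frac{1}{r^2}$ rather than to a single $\frac{1}{(r-1)^2}$ term. I would settle this by cross-checking against the case $r=1$, where the analogous computation must reproduce the Siegel value $2t$ from \lemref{r=1 vol}. I would also confirm that the identity is exact for all $t\le\tfrac12$, which Step 1 guarantees.
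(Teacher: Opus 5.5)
\textbf{Verdict: genuine gap in the fibre computation of $\Phi_r$, though it is easily repaired.} Your route differs from the paper's, which simply sets $a=2t$ in \cite[Section 8, Proposition 2]{stromsmall} and telescopes $\sum_{j\ge r}f_j(2t)$. Steps 1 and 3 and the primitive-vector unfolding are fine. The problem is Step 2. There you claim $\Phi_r(v)=\mathrm{Vol}\big(E_t\cap(E_t-(r-1)v)\big)$, which is false. In Step 3, when this yields only $\frac{1}{(r-1)^2}$, you replace it by a difference of two overlaps because the target formula demands it; that step is unproved. The uncapped integrand $(\ell_v(s)-(r-1)|v|)_+$ is the average, over the position along the line, of $(n-(r-1))_+$, where $n$ is the number of points on the chord. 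It therefore weights a line carrying $n\ge r$ points by $n-r+1$ instead of $1$. The discrepancy is $\frac{6}{\pi^2}\int_{v_2>0}\mathrm{Vol}\big(E_t\cap(E_t-rv)\big)\,dv=\frac{12}{\pi^2}\frac{t^2}{r^2}\neq 0$.

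\textbf{The missing one-dimensional computation.} Parametrise the fibre $\R^2/A\Z^2$ by the perpendicular offset $s\in[0,1/|v|)$ of the lines in direction $v$ and the position $p\in[0,|v|)$ along a period. This is a fundamental domain of area exactly $1$, so no extra normalisation appears. A chord of length $\ell$ contains $\lfloor \ell/|v|\rfloor$ or $\lfloor \ell/|v|\rfloor+1$ points, and
\begin{equation*}
\mathrm{Leb}\{p\in[0,|v|):\ \text{at least } r \text{ points}\}=\min\{|v|,(\ell-(r-1)|v|)_+\}=(\ell-(r-1)|v|)_+-(\ell-r|v|)_+ .
\end{equation*}
Unfolding the sum over parallel lines then gives
\begin{equation*}
\Phi_r(v)=\int_{\R}\min\{|v|,(\ell_v(s)-(r-1)|v|)_+\}\,ds=\mathrm{Vol}\big(E_t\cap(E_t-(r-1)v)\big)-\mathrm{Vol}\big(E_t\cap(E_t-rv)\big),
\end{equation*}
and your Step 3 then yields the lemma exactly.

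\textbf{An equivalent fix.} Keep your uncapped integrand. Set $N=\#(\mathcal{L}\cap E_t)$. For $r\ge 2$ the uncapped integrand computes $\int_X (N-(r-1))_+\,d\mu=\frac{12}{\pi^2}\frac{t^2}{(r-1)^2}$, since lines in all other directions carry at most one point and contribute nothing. Then subtract the same identity at $r+1$, using $\chi_{\{N\ge r\}}=(N-(r-1))_+-(N-r)_+$.

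\textbf{Your $r=1$ cross-check.} It is ill-posed: a single point determines no direction, and $\sum_v\Phi_1(v)$ diverges. A meaningful check is the uncapped value at $r=2$, namely $\int_X (N-1)_+\,d\mu=\frac{12}{\pi^2}t^2$. This agrees with Siegel's $\int_X N\,d\mu=2t$ minus $\mu(A^{(1)}_t)=2t-\frac{12}{\pi^2}t^2$.

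With this repair your argument is a self-contained derivation of the formula the paper imports from \cite{stromsmall}. It uses only collinearity when $\mathrm{Vol}(E_t)\le 1$ and the primitive-vector Siegel formula.
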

\begin{proof}
    Set $a=2t$ in \cite[Proposition 2, Section 8]{stromsmall} and note that $\mu(A_t^{(r)})=\sum_{j\geq r}f_j(2t)$. It is an easy check to see that the sum telescopes to the above expression for $r\geq2$.
\end{proof}

In the case $r=1$, note that \cite[Proposition 2, Section 8]{stromsmall} gives us that \begin{equation}\mu(A_t^{(1)})=2t-\frac{12}{\pi^2}t^2\end{equation}for all $0 \leq t\leq \frac{1}{2}$, which is stronger than \lemref{r=1 vol}. However, the bounds in \lemref{r=1 vol} suffice for our purpose.\\

Let $C^{(0)}_t$ denote the set of all affine lattices that do not intersect the rectangle $E_t$ i.e. \begin{equation}C^{(0)}_t:=\{\Ll \in G/\Gamma: \Ll \cap E_t=\varnothing \}.\end{equation}  Note that $C^{(0)}_t=X-A^{(1)}_t$.  It is a straightforward application of \cite[Theorem 1]{JA} that $\mu(C^{(0)}_t) \to 0$ as $t \to \infty$. We would later need precise asymptotics of $\mu(C^{(0)}_t)$ for large $t$,  we work this out in the following lemma:  
\begin{lem} \label{asymp}
The volume of the set $C^{(0)}_t$ satisfies 
\begin{equation}
    \lim_{t \to \infty} \mu(C^{(0)}_t)t=\frac{1}{\pi^2}.
\end{equation} 
\end{lem}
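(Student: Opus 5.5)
The plan is to disintegrate $\mu$ along the projection $\pi:G/\Gamma\to G'/\Gamma'$. The fibre over a unimodular lattice $L$ is the torus $\R^2/L$, on which $\mu$ restricts to normalized Haar measure. An affine lattice $L+v$ misses $E_t$ exactly when $-v\notin E_t+L$. Hence
\[
\mu(C^{(0)}_t)=\int_{G'/\Gamma'} \Phi_t(L)\,d\mu'(L),\qquad \Phi_t(L):=1-\textnormal{area}\big((E_t+L)/L\big),
\]
where $\Phi_t(L)$ is the proportion of the torus $\R^2/L$ not covered by the image of the rectangle $E_t$. I would compute $\Phi_t(L)$ geometrically for large $t$, and then integrate with Siegel's formula for primitive vectors.

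\emph{Geometric step.} Fix a primitive vector $w=(w_1,w_2)$ of $L$. Then $L$ lies on parallel lines in direction $w$, with spacing $1/|w|$ between consecutive lines. Suppose $|w|\ll t$, so that a horizontal segment of length $2t$ wraps around the closed geodesic $\R w/\Z w$ many times. Then, up to boundary effects, the image of $E_t$ covers a full band of those lines. The width of this band, measured in the direction normal to $w$, is $(2t|w_2|+|w_1|)/|w|$, since this is the normal projection of the rectangle. Measuring the uncovered fraction in units of the spacing $1/|w|$ gives
\[
\Phi_t(L)\approx \max\big(0,\,1-2t|w_2|-|w_1|\big).
\]
Only one pair $\pm w$ can make the right-hand side positive: two independent such vectors would force $|\det|<1$. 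I would also show that if $L$ has no primitive vector with $2t|w_2|+|w_1|<1$, then $E_t+L$ covers the whole plane. The reason is that the rectangle is then "fat" in every lattice direction; this can be made precise using a reduced basis of $L$. This yields
\[
\Phi_t(L)=\tfrac12\sum_{w\in L\ \text{primitive}} f_t(w)+\text{error},\qquad f_t(x,y):=\max\big(0,\,1-2t|y|-|x|\big).
\]

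\emph{Integration.} By Siegel's formula for primitive vectors, $\int_{X'}\sum_{w\ \text{prim}} f\,d\mu'=\zeta(2)^{-1}\int_{\R^2} f$. Here $\int_{\R^2} f_t$ is four times the volume of a simplex with legs $1,\,1/(2t),\,1$, namely $4\cdot\frac{1}{12t}=\frac1{3t}$. Therefore the main term equals
\[
\frac12\cdot\frac{6}{\pi^2}\cdot\frac{1}{3t}=\frac{1}{\pi^2 t},
\]
which is the claimed limit.

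\emph{Main obstacle.} The hardest step is controlling the error so that it is $o(1/t)$. Two sources of error need bounds.
\begin{enumerate}
\item \emph{Lattices where the relevant $w$ is not short compared to $t$.} Here the wrapping argument fails. I would bound the contribution by $\mu'$ of the set of lattices having a primitive vector with $|w_2|\lesssim 1/t$ and $|w|\gtrsim t^{1/2}$; by Siegel's formula this set has measure $O(t^{-1}\cdot t^{-1/2})$-ish.
\item \emph{Boundary effects of the wrapping.} These are the partial coverage at the ends of the segment of length $2t$, which gives a relative error $O(|w|/t)$. Integrated against $f_t$, this error is again $o(1/t)$.
\end{enumerate}
If the exact covering claim in the geometric step proves awkward, I would fall back on two-sided bounds: squeeze $\Phi_t$ between $\tfrac12\sum f_{t(1\pm\epsilon)}(w)$ on a set of lattices whose complement has measure $o(1/t)$, and then let $\epsilon\to0$.
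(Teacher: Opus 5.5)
Your route is genuinely different from the paper's. The paper quotes the closed formula for $f^{\text{box},\mathrm{ASL}_2}_0(a)$ of Str\"ombergsson--Venkatesh and expands $\Xi$ at infinity. You instead derive the asymptotics from scratch: you disintegrate over $G'/\Gamma'$, use uniqueness of a primitive vector in the rhombus $K_t=\{|x|+2t|y|<1\}$, and apply Siegel's primitive-vector formula. Your uniqueness argument is right: with $g(w):=|w_1|+2t|w_2|$ one has $|\det(w,w')|\le g(w)g(w')/(2t)$. The main term $\frac{1}{2\zeta(2)}\cdot\frac{1}{3t}=\frac{1}{\pi^2 t}$ is correct. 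The lower bound is immediate, since the covered area is at most $g(w)$ for every primitive $w$, so $\Phi_t(L)\ge\frac12\sum_w f_t(w)$ always. Your approach is self-contained and explains the constant; the paper's is shorter but rests entirely on the explicit formula. The upper bound, however, contains a false step as you set it up.

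\emph{The false step.} The claim that $E_t+L=\R^2$ whenever no primitive $w$ has $g(w)<1$ fails on a set of measure $\asymp t^{-2}$. Fibre $\R^2/L$ by closed geodesics of length $|w|$ in direction $w$ over the normal circle of length $1/|w|$. The normal projection of $E_t$ has length $g(w)/|w|$. The cross-sections of $E_t$ grow linearly from $0$ at both ends and stay shorter than $|w|$ on normal width exactly $|w_1w_2|/|w|$ at each end, provided $|w_1|\le 2t$ and $|w_2|\le 1$. Now suppose $1\le g(w)<1+|w_1w_2|$. The projection then overlaps itself only on width $(g(w)-1)/|w|$. So some closed geodesics meet a single cross-section shorter than $|w|$ and are not covered. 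Yet $L\cap K_t=\{0\}$ by the determinant bound.

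Your error source (1) does not address this. Any $w$ with $f_t(w)>0$ already has $|w|<1+o(1)$. Moreover, the set you propose to bound has measure close to $1$, because by Minkowski every lattice has a primitive vector in $[-t,t]\times[-1/t,1/t]$.

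\emph{The repair.} Use the same fibration, but with a margin. Take $w$ minimizing $g$ over primitive vectors. Minkowski gives $g(w)\le 2\sqrt t$, hence $|w_1|\le 2t$ and $|w_2|\le 1$. The cross-sections then have length $\ge |w|$ on normal width $(g(w)-2|w_1w_2|)/|w|\ge (g(w)-g(w)^2/(4t))/|w|$. This is $\ge 1/|w|$ for $1+1/t\le g(w)\le 2\sqrt t$ and $t\ge 1$, and then $E_t+L=\R^2$. The lattices with $1\le g(w)<1+1/t$ have measure $O(t^{-2})$ by Siegel, so they contribute $o(1/t)$.

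For $g(w)<1$ the same computation is exact: $\Phi_t(L)=f_t(w)+|w_1w_2|$. The correction term integrates to $\frac{1}{8\pi^2t^2}$, which matches the second term in the expansion of the paper's formula at $a=2t$.
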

\begin{proof}
From the discussion following \cite[Section 8, Proposition 2]{stromsmall}, we have a formula for the measure of affine unimodular lattices that do not intersect a rectangle of area $a$. The measure of the set of affine lattices that do not intersect a rectangle of area $a$ is denoted by $f^{\text{box},\mathrm{ASL}_2}_0(a)$ (following the notation in \cite{stromsmall}). Note that $\mu(C^{(0)}_t)=f^{\text{box},\mathrm{ASL}_2}_0(2t)$. For $a\geq 1$ the authors give a formula for $f^{\text{box},\mathrm{ASL}_2}_0(a)$ in terms of the function $\Xi(a)$. We recall that for $a>0$,  $\Xi(a)$ is the function such that $$\Xi^{''}(a)=\left(1-\frac{1}{a}\right)^2\log\left(\left|1-\frac{1}{a}\right|\right); \quad \Xi'(1)=\Xi(1)=0.$$    

Then, by \cite[Section 8]{stromsmall}, one has 
\begin{equation} \label{step0}
f^{\text{box},\mathrm{ASL}_2}_0(a)=\begin{cases}
     \frac{3}{\pi^2}a^2-a+1 & \textnormal{if} \quad 0\leq a\leq 1\\
    \frac{12}{\pi^2}(\Xi(a)-\Xi(a/2))+\frac{6}{\pi^2}a\log a+c_1a+c_2& \textnormal{if} \quad 1\leq a
\end{cases}    
\end{equation}
where $c_1=\frac{6+6\log(2)}{\pi^2}-2$ and $c_2=\frac{18\log(2)}{\pi^2}$.\\

By Taylor expanding the above expression for $\Xi''$ we get that for all $a\geq1$ the series $$\Xi''(a)=-\frac{1}{a}+\frac{3}{2a^2}+\sum_{n\geq 3} \frac{b_n}{a^n}.$$converges uniformly on $[1, \infty)$, where $b_n=\frac{2}{n-1}-\frac{1}{n}-\frac{1}{n-2}$ (since $|b_n|=O(1/n^3)$, the sequence is absolutely summable, thus by $M$-test we get the uniform convergence). Integrating once and using that $\Xi'(1)=0$ one gets a series for $\Xi'(a)$ when $a\geq1$ that converges uniformly on $[1, \infty)$  $$\Xi'(a)=-\log(a)-\frac{3}{2a}-\sum_{n\geq3}\frac{b_n}{(n-1)a^{n-1}}+\sum_{n\geq 3}\frac{b_n}{(n-1)}+\frac{3}{2}.$$
We set $C=\sum_{n\geq 3}\frac{b_n}{(n-1)}+\frac{3}{2}$, it is easy to see via a telescoping sum argument that $C=\pi^2/3-2$. Integrating again, we get \begin{equation}
\Xi(a)=-a\log(a)+a-\frac{3}{2}\log(a)+C(a-1)+\sum_{n\geq3}\frac{b_n}{(n-1)(n-2)a^{n-2}}+D
\end{equation}
where $D=-\sum_{n\geq 3}\frac{b_n}{(n-1)(n-2)}-1.$

Using the above expansion for $\Xi(a)$ one can calculate that for all $a\geq 2$ one has \begin{multline}\label{step1}
    \frac{12}{\pi^2}(\Xi(a)-\Xi(a/2))=-\frac{6}{\pi^2}a\log(a)+\frac{6}{\pi^2}a(1-\log(2)+C) \\
     -\frac{18\log2}{\pi^2}+\frac{12}{\pi^2}(\Theta(a)-\Theta(a/2))
\end{multline} 

Where, for $a\geq 1$ we set $\Theta(a)$ to be the function given by \begin{equation}
    \Theta(a):=\sum_{n \geq 3} \frac{b_n}{(n-1)(n-2)a^{n-2}}. 
\end{equation}

Thus by \eqref{step0} and \eqref{step1}, and noting that $c_1=\frac{6\log(2)}{\pi^2}-\frac{6(1+C)}{\pi^2}$ and $c_2=\frac{18\log2}{\pi^2}$ one has that for all $a\geq2$ $$f^{\text{box},\mathrm{ASL}_2}_0(a)=\frac{12}{\pi^2}\left(\Theta(a)-\Theta(a/2)\right).$$ From here it is easy to see that $$\lim_{a \to \infty}f^{\text{box},\mathrm{ASL}_2}_0(a)a=\frac{12}{\pi^2}(b_3/2-b_3)=\frac{2}{\pi^2}.$$ This proves our claim.
\end{proof}
\begin{remark}
     From \lemref{asymp}, one can conclude that for any bounded subset $K \subset \R^2$, there exists a positive measure set of affine unimodular lattices that avoid $K$.
\end{remark}
\subsection{Volumes in $G'/\Gamma'$} Given $t\geq 0$, consider the region $\tilde{E}_t \subset \R^2$ given by \begin{equation}\label{eucregionhom}
\tilde{E}_t:=\left\{ {\mt{x}{y}} \in \R^2: |x|\leq t, \;   |y|\leq 1 \right\}.
\end{equation} 
Clearly $\textnormal{Vol}(\tilde{E}_t)=4t.$
Given $r \geq 1$, set $$B^{(r)}_t:=\{\mathcal{L}\in G'/\Gamma': \#(\Ll \cap \tilde{E}_t)\geq 2r+1\}.$$

Recall the following volume estimate from \cite[Section 8, Proposition 3]{stromsmall}:
\begin{lem} \label{homvol} 
For all $r\geq 1$ and $0\leq t\leq \frac{1}{2}$, one has $$\mu'(B^{(r)}_t)=\frac{12}{\pi^2r^2}t.$$   
\end{lem}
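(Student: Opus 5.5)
Since \lemref{homvol} is quoted from \cite[Section 8, Proposition 3]{stromsmall}, the proof is a translation of that formula into our notation, together with a sanity check by a short direct computation.

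\textbf{Translation.} Str\"ombergsson--Venkatesh compute, for a symmetric box of area $a$ and $0\le a\le 2$, the probability that a random unimodular lattice has exactly $2j+1$ points in the box (the origin plus $j$ antipodal pairs). The box $\tilde E_t$ has area $a=4t$, and the range $0\le t\le \frac12$ corresponds to $0\le a\le 2$. Writing those probabilities as $g_j(a)$, we have $\mu'(B^{(r)}_t)=\sum_{j\ge r} g_j(4t)$, and we expect this sum to telescope to $\frac{12}{\pi^2 r^2}t$, exactly as in the proof of \lemref{r>=2 vol}.

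\textbf{Direct argument.} Since the formula is linear in $t$, it can also be checked from first principles. Any unimodular $\Lambda$ with a nonzero point in $\tilde E_t$, $t\le \frac12$, has a primitive vector $v=(x,y)$ with $|x|\le t$ and $|y|\le 1$. Any two nonzero points of $\Lambda\cap\tilde E_t$ are collinear: if they were independent, their determinant would satisfy $|x_1y_2-x_2y_1|\le 2t\le1$, with equality only on a measure-zero set. Hence
\[
\Lambda\cap \tilde E_t=\{kv: |k|\le m\}, \qquad m=\min\left(\lfloor t/|x|\rfloor,\lfloor 1/|y|\rfloor\right).
\]
So $\#(\Lambda\cap\tilde E_t)\ge 2r+1$ if and only if $rv\in\tilde E_t$, that is, $v\in \frac1r\tilde E_t=\tilde E_{t/r}\cdot$ (after rescaling both coordinates by $\frac1r$).

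The pair $\pm v$ is unique, so
\[
\mu'(B^{(r)}_t)=\tfrac12\int \#\{\text{primitive } v\in \Lambda\cap \tfrac1r\tilde E_t\}\,d\mu'.
\]
By Siegel's formula for primitive vectors, which carries the density $1/\zeta(2)=6/\pi^2$, this equals
\[
\frac12\cdot\frac{6}{\pi^2}\cdot\frac{4t}{r^2}=\frac{12t}{\pi^2 r^2}.
\]
This gives the stated formula.

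\textbf{Main obstacle.} The delicate step is the collinearity and uniqueness claim: a lattice cannot contain two independent primitive vectors in $\tilde E_t$ once $4t\le 2$. The determinant bound $2t\le 1$ handles this, with only the measure-zero boundary case needing to be discarded.
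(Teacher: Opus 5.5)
Your first paragraph is exactly the paper's proof: set $a=4t$ in \cite[Section 8, Proposition 3]{stromsmall}, write $\mu'(B^{(r)}_t)=\sum_{j\ge r}f_{2j+1}(4t)$, and telescope. You only say you expect the telescoping to work, but the paper likewise leaves it as an easy check.

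Your direct argument is a genuinely different route, and it is correct. For $t\le\frac12$, two independent vectors of a unimodular lattice in $\tilde E_t$ would have $|\det|\le 2t\le 1$. Equality forces both vectors onto the boundary of the box, which is a null set by Siegel's formula. So almost surely $\Lambda\cap\tilde E_t=\{kv:|k|\le m\}$ with $v$ primitive. Then $\Lambda\in B^{(r)}_t$ exactly when $\pm v$ are the only primitive vectors of $\Lambda$ in $\frac1r\tilde E_t$, and the primitive Siegel formula gives $\frac12\cdot\frac{6}{\pi^2}\cdot\frac{4t}{r^2}$.

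One notational slip: $\frac1r\tilde E_t=\{|x|\le t/r,\ |y|\le 1/r\}$ is not $\tilde E_{t/r}$ in the paper's notation, since $\tilde E_{t/r}$ keeps $|y|\le 1$. The area $4t/r^2$ you actually used is the correct one for $\frac1r\tilde E_t$.

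Your route uses only Siegel's mean value theorem for primitive vectors, so it is self-contained. It explains why the answer is linear in $t$ with constant $\frac{1}{\zeta(2)}\cdot\frac{2}{r^2}$, and it shows precisely where the hypothesis $t\le\frac12$ (area $\le 2$) enters. It also passes a consistency check: $\sum_{r\ge1}2\mu'(B^{(r)}_t)=\frac{24t}{\pi^2}\zeta(2)=4t$ recovers the full Siegel formula for $\tilde E_t$.

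The paper's route leans on the explicit Str\"ombergsson--Venkatesh formulas. Its advantage is that the same formulas also cover the regime $2\le a\le 4$. There collinearity fails, and the paper needs that regime for \lemref{homouppervol}; your argument does not reach it.
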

\begin{proof}
    Set $a=4t$ in \cite[Section 8, Proposition 3]{stromsmall}, and note that $\mu^{'}(B_t^{(r)})=\sum_{j\geq r}f_{2j+1}(4t)$. Then it is easy to see that the sum telescopes to the above expression.
\end{proof}

Note that by Minkowski's first theorem, any unimodular lattice has a \emph{non-zero} lattice point in the region $\tilde{E}_1$. So the measure of lattices that have no non-zero lattice points in the region $\tilde{E}_{1-s}$ goes to zero as $s \to 0^{+}$. We would like to quantify the rate at which this quantity goes to zero. We do this in the following lemma:
\begin{lem}\label{homouppervol}
    \begin{equation}
        \lim_{s \to 0^{+}} \frac{\mu'( \{\Ll \in G'/\Gamma': \#(\Ll \cap \tilde{E}_{1-s})=1\})}{s^2 \log(\frac{1}{s})}=\frac{6}{\pi^2}.
    \end{equation}
\end{lem}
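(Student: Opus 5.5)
The plan is to read the asymptotic off the explicit formula of Str\"ombergsson and Venkatesh, in the same way as in the proof of \lemref{asymp}. The set in question consists of the unimodular lattices with no non-zero point in the box $\tilde{E}_{1-s}$, a symmetric box of area $a=4(1-s)$. Its measure is the quantity $f_0^{\text{box},\mathrm{SL}_2}(a)$ of \cite[Section 8, Proposition 3]{stromsmall}, evaluated at $a=4-4s$. By Minkowski's first theorem this function vanishes for $a\geq 4$, so the whole content of the lemma is its behaviour as $a\to 4^{-}$.

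\textbf{Step 1.} Extract from \cite[Section 8]{stromsmall} the closed form of $f_0^{\text{box},\mathrm{SL}_2}(a)$ on the last interval before $4$; I expect something like $[2,4]$ or $[3,4]$. As in \eqref{step0}, this should be a combination of elementary terms and second antiderivatives $\Xi$-type functions of expressions of the form $(1-c/a)^2\log|1-c/a|$, with the singular point sitting at $a=4$.

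\textbf{Step 2.} Taylor expand this closed form around $a=4$, writing $a=4-4s$. The constant and linear terms must cancel, since the function and its derivative vanish at $a=4$. This cancellation is a good consistency check on the constants, analogous to the cancellation of $c_1,c_2$ in the proof of \lemref{asymp}.

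\textbf{Step 3.} Identify the leading surviving term. The only source of a logarithm is the $\Xi$-type piece. There $\Xi''(a)\sim C\log(4-a)$ near $a=4$, and integrating twice gives $\tfrac{C}{2}(4-a)^2\log(4-a)$, which dominates the pure polynomial $(4-a)^2$ terms. With $(4-a)^2=16s^2$ and $\log\tfrac1s\sim\log\tfrac{1}{4s}$, the limit of the ratio in the lemma is $8C$ times the prefactor of $\Xi$. Matching this with $\tfrac{6}{\pi^2}$ is routine once the formula is in hand.

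\textbf{Main obstacle.} The hard step is Step 1: pinning down the exact form of the formula near $a=4$, with its normalising factor $\tfrac{1}{\zeta(2)}=\tfrac{6}{\pi^2}$, and checking that no other log-singular term contributes. If the cited formula proves awkward to use near $a=4$, I would instead compute directly.

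\textbf{Direct computation.} A lattice avoiding $\tilde{E}_{1-s}$ is close to a critical lattice of the square. The critical lattices of the square $\tilde{E}_1$ are generated by $(1,\theta),(0,1)$ or by $(\theta,1),(1,0)$, with $\theta\in[0,1)$.
\begin{enumerate}[label=(\roman*)]
\item Parametrise a neighbourhood of this one-parameter family by Iwasawa-type coordinates, namely a primitive vector $v=(1+\epsilon_1,\theta)$ together with a second basis vector in the strip $\{0<y\le 1/(1+\epsilon_1)\}$.
\item In these coordinates Haar measure is $\tfrac{1}{\zeta(2)}\,dv\,dw$ up to the standard fundamental-domain normalisation.
\item Write the avoidance conditions as explicit linear inequalities of size $O(s)$ on the transversal parameters. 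These come from $v$, $w$ and $w-v$ all lying outside the box.
\item For fixed $\theta$ the allowed transversal region has area of order $s^2/\theta$ when $\theta\gg s$, and of order $s$ when $\theta\lesssim s$. Integrating over $\theta\in[s,1]$ produces the factor $s^2\log\frac1s$, while the range $\theta\lesssim s$ contributes only $O(s^2)$.
\item The two symmetric families, horizontal and vertical, together with the exact constant in the area formula, should combine to give $\tfrac{6}{\pi^2}$.
\end{enumerate}
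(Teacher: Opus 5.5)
Your main route is the paper's: it also identifies the measure with the Str\"ombergsson--Venkatesh function at $a=4-4s$ and expands near $a=4$. The relevant function is $f_1^{\mathrm{box},\mathrm{SL}_2}$, not $f_0$, because the origin is counted. On $[2,4]$ it equals $\frac{12}{\pi^2}(\Psi(a/4)-\log a)+\frac{3}{\pi^2}a+\frac{12}{\pi^2}(2\log 2-1)$, where $\Psi'(b)=(b^{-1}-1)\log|b^{-1}-1|$ and $\Psi(1)=0$. Your Step 3 recipe then gives $\frac{d^2}{da^2}\Psi(a/4)\sim-\frac{1}{16}\log(4-a)$, and the constant and linear terms cancel as you predicted. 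Since $\log(4s)\sim-\log\frac1s$, the limit is $-8C\cdot\frac{12}{\pi^2}=\frac{6}{\pi^2}$; note the sign, which your Step 3 states as $8C$.
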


\begin{proof}
A formula for the measure of the set of lattices that have only one point in a rectangle centered at the origin of area $a$ is given in the discussion following \cite[Section 8, Proposition 3]{stromsmall}. $f_1^{\text{box}, \mathrm{SL}_2}(a)$ (borrowing notation from \cite{stromsmall}) denotes the measure of the set of lattices in $G'/\Gamma'$ that only intersect a rectangle of area $a$ centered at the origin at exactly one point (i.e., only intersect at the origin). Then, \begin{equation}
    f_1^{\text{box}, \mathrm{SL}_2}(a)= \begin{cases}
        1-\frac{3}{\pi^2}a  & \textnormal{if} \quad 0\leq a\leq 2\\
        \frac{12}{\pi^2}(\Psi(a/4)-\log(a)) +\frac{3}{\pi^2}a+\frac{12}{\pi^2}(2\log2-1) & \textnormal{if} \quad 2\leq a \leq4\\
        0 & \textnormal{if} \quad 4\leq a.
    \end{cases}
\end{equation}
   Where $\Psi$ is the function on $\R_{>0}$ defined by $$\Psi'(a)=(a^{-1}-1)\log|a^{-1}-1| \quad \textnormal{for all}\; a>0; \quad \Psi(1)=0.$$
   Note that for small $s>0$, one has the expansion 
   \begin{align*}
   \Psi'(1-s)=&\frac{s}{1-s}\log\left(\frac{s}{1-s}\right)\\
   =&(s+s^2+s^3+ \ldots) (\log(s)+s+\frac{s^2}{2}+\frac{s^3}{3}+\ldots)\\
   =&s\log(s)+s^2(\log(s)+1)+O(s^3\log(s)).
   \end{align*}
   Integrating this out, one has for small $s>0$
   \begin{align*}
       \Psi(1-s)&=-\int_{0}^s\Psi'(1-t)dt\\
       &=-\frac{1}{2}s^2\log(s)+\frac{s^2}{4}+O(s^3\log(s)).
   \end{align*}
   Noting that $\mu'(\{\Ll \in G'/\Gamma': \#(\Ll \cap \tilde{E}_{1-s})=1\})=f_1^{\text{box}, \mathrm{SL}_2}(4-4s)$, one has for small $s>0$
   \begin{align*}
   \mu'(\{\Ll \in G'/\Gamma': \#(\Ll \cap \tilde{E}_{1-s})=1\})&=\frac{12}{\pi^2}(\Psi(1-s) -\log(4)-\log(1-s)) \\
   & \qquad \quad +\frac{12}{\pi^2}(1-s)+\frac{12}{\pi^2}(2\log 2-1) \\
   &=\frac{12}{\pi^2}\Psi(1-s)-\frac{12}{\pi^2}\log(1-s)-\frac{12}{\pi^2}s\\
   &=-\frac{6}{\pi^2}s^2\log(s)+\frac{3}{\pi^2}s^2+\frac{12}{\pi^2}(s+\frac{s^2}{2})-\frac{12}{\pi^2}s +O(s^3\log(s)) \\ &=-\frac{6}{\pi^2}s^2\log(s)+\frac{9}{\pi^2}s^2 +O(s^3\log(s))
   \end{align*}
Thus, $$\lim_{s \to 0^{+}} \frac{\mu'(\{\Ll\in G'/\Gamma': \#(\Ll \cap \tilde{E}_{1-s})=1\})}{s^2\log(\frac{1}{s})}=\frac{6}{\pi^2}.$$   
\end{proof}

\section{Smooth approximations}\label{sec:smooth_appro}
This section aims to prove lemmas about approximation of sets in $X$ and $X'$ by smooth functions. To give a self-contained treatment, we quote and recall results proved about approximation of sets in \cite[Section 2]{stromsmall}. Essentially, the arguments given there suffice for our purpose, but we still include them here for the sake of completeness, adapting the arguments to our setting.
\subsection{Sobolev norms}
Let $L$ be a real Lie group and $\Lambda \subset L$ be a lattice.  Fix a basis $\{X_i\}$ for the Lie algebra of $L$ and denote it by $\text{Lie}(L)$. Choose the inner product on $\text{Lie}(L)$ such that the vectors $\{X_i\}$ form an orthonormal basis.  This inner product gives rise to a right-invariant Riemannian metric on $L$ obtained by right translation. Denote the distance function obtained via this by $d_L(\cdot , \cdot )$. This metric descends to a metric on the space $L/\Lambda$, giving rise to a distance function $d_{L/\Lambda}(\cdot,\cdot)$ on $L/\Lambda$. Let $U_\varepsilon$ denote the $\varepsilon$-neighbourhood of the identity element in $L$ in the above metric. Observe that for $x, y \in L/\Lambda$ we have $d_{L/\Lambda}(x,y)<\varepsilon$ if and only if $y \in U_{\varepsilon}\cdot x$.\\

For any $Y \in \text{Lie}(L)$ and $f:L/\Lambda \to \C$ a smooth function, let $\D_Y(f)(x)$ denote the derivative of $f$ at $x$ along the direction $Y$, i.e., $$\D_Y(f)(x):=\lim_{t \to 0} \frac{f(\exp(tY)x)-f(x)}{t}.$$
For every monomial $Z=X_{i_1}^{l_1}X_{i_2}^{l_2} \cdots X_{i_r}^{l_r}$ made out of $\{X_i \}$ we have a differential operator $\D_Z$ defined as $\D_Z:=\D_{X_{i_1}}^{l_1} \circ \D_{X_{i_2}}^{l_2} \circ \cdots \circ {\D_{X_{i_r}}^{l_r}}$ of order $l=l_1+l_2+\cdots +l_r$. Given $k \in \N$, the $\mathcal{C}^{k}$ norm of $f$ is defined by
\begin{equation} 
\|f\|_{\mathcal{C}^k}:=\max_{0 \leq \textrm{ord}(\D)\leq k}\|\D(f)\|_{\infty}
\end{equation} 
where $\D$ runs over all $\D_Z$ where $Z$ is a monomial in  $\{X_i\}$ of order $\leq k$. Note that $\|f\|_{\mathcal{C}^k} \geq \|f\|_{\infty}$. We denote by $C^k_b(L/\Lambda)$ the set of all functions $f \in C^{\infty}(L/\Lambda)$ with all derivates of $f$ up to order $k$ bounded pointwise (i.e. $\|f\|_{\mathcal{C}^k}<\infty$) and we set $C^{\infty}_b(L/\Lambda):=\cap_{k \in \N} C^{k}_b(L/\Lambda)$. 

\subsection{Smooth subsets} 

\begin{definition}
Let $A$ be a subset of a metric space $Y$ equipped with a measure $\nu$. We say that $A$ is $K$-smooth if we have a $K>0$ such that for all $0<\varepsilon\leq1$, one has $\nu(\partial_{\varepsilon} A) \leq K\varepsilon$. Here we have set $\partial_{\varepsilon} A$ to be the ``$\varepsilon$-neighbourhood of the boundary of $A$", defined as $\partial_\varepsilon A:=B_{\varepsilon}(A) \cap B_\varepsilon(Y-A)$. $B_\varepsilon(A)=\cup_{x \in A} B_\varepsilon(x)$, where $B_\varepsilon(x)$ denotes the open ball of radius $\varepsilon$ centered at the point $x$.
\end{definition}

\begin{remark}\label{triv}
    Note that if $A$ is $K$-smooth, then so is the set $Y-A$.
\end{remark}

The usefulness of the above definition is clear from the following lemma, which shows that all $K$-smooth sets of $L/\Lambda$ can be approximated with smooth functions with any given degree of accuracy with an appropriate cost on the ${\mathcal{C}^j}$ norm of the approximating function.

\begin{lem} \label{lemma:2}
Let $A$ be a $K$-smooth subset of $L/\Lambda$, and let $\chi_A$ be the characteristic function of $A$. For each $0<\delta \leq \frac{1}{2}$ there exists functions $f_{\delta}^{-}$ and $f_{\delta}^{+}$ in $C^{\infty}(L/\Lambda)$ such that for all $j\geq 1$ one has 

\begin{enumerate}
    \item $0\leq f_{\delta}^{-} \leq \chi_{A} \leq f_{\delta}^{+}\leq 1$
    \item $\| f_{\delta}^{\pm}-\chi_A\|_{L^1}\leq 2K\delta$
    \item $\|f_\delta^{\pm}\|_{\mathcal{C}^j} \leq C_j   \delta^{-j}$, where $C_j$ is a constant independent of $A$ (but dependent on $j$).
\end{enumerate}
\end{lem}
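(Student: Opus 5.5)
The plan is the standard mollification argument, as in \cite[Section 2]{stromsmall}. First fix a smooth bump on the group: choose $\psi \in C_c^\infty(L)$ with $\psi\geq 0$, $\int_L \psi \, dm_L=1$ and $\operatorname{supp}\psi \subset U_1$. For $0<\delta\leq \frac12$ set $\psi_\delta$ to be a rescaling supported in $U_{\delta/2}$, normalised by $\int \psi_\delta=1$, such that $\|\D_Z \psi_\delta\|_{L^1(L)} \leq C_j \delta^{-j}$ for every monomial $Z$ of order $j$. Concretely, I would take $\psi_\delta(g)=c_\delta\,\phi(\delta^{-1}\log g)$ in exponential coordinates, with $\phi$ a fixed bump on $\mathrm{Lie}(L)$ and $c_\delta\asymp \delta^{-\dim L}$. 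The Haar measure and the left-invariant vector fields in these coordinates are smooth perturbations of the Euclidean ones near $0$, so the chain rule gives the $\delta^{-j}$ loss, uniformly for $\delta\leq 1/2$.

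Next, define the two sets to be smoothed: $A^{+}:=B_{\delta/2}(A)$ and $A^{-}:=L/\Lambda - B_{\delta/2}(L/\Lambda - A)$, the points whose $\delta/2$-ball lies in $A$. Then put
\[
f^{\pm}_\delta(x):=\int_L \psi_\delta(g)\,\chi_{A^{\pm}}(g^{-1}x)\,dm_L(g).
\]
These functions are smooth, since differentiating in $x$ along $\exp(tY)$ can be moved onto $\psi_\delta$ by left-invariance of Haar measure (equivalently, $f^\pm_\delta=\psi_\delta * \chi_{A^\pm}$ with the action on $L/\Lambda$). This gives $\|\D_Z f^\pm_\delta\|_\infty \leq \|\D_Z\psi_\delta\|_{L^1}\|\chi_{A^\pm}\|_\infty\leq C_j\delta^{-j}$, with a constant independent of $A$, which is property (3). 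The bounds $0\leq f^\pm_\delta\leq 1$ are immediate.

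For property (1), use that $d(g^{-1}x,x)<\delta/2$ whenever $g\in U_{\delta/2}$, by the stated description of $d_{L/\Lambda}$ via $U_\varepsilon$; note $U_\varepsilon$ is symmetric. If $x\in A$, then every $g^{-1}x$ lies in $B_{\delta/2}(A)=A^{+}$, so $f^{+}_\delta(x)=1$. If $x\notin A$, then each $g^{-1}x$ lies within $\delta/2$ of the complement, hence not in $A^{-}$, so $f^{-}_\delta(x)=0$. Together with $0\leq f^\pm_\delta\leq 1$ this gives $f^-_\delta\leq\chi_A\leq f^+_\delta$.

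For property (2), observe that $f^{+}_\delta-\chi_A$ is supported on $B_\delta(A)- A\subset\partial_\delta A$ and bounded by $1$, so its $L^1$ norm is at most $\nu(\partial_\delta A)\leq K\delta$. Similarly, $\chi_A-f^-_\delta$ is supported on $A\cap B_\delta(L/\Lambda-A)\subset \partial_\delta A$. This yields the bound $K\delta\leq 2K\delta$; the factor $2$ leaves slack for boundary conventions (open versus closed balls). The only real obstacle is step (3): checking that the derivative bounds of $\psi_\delta$ hold uniformly, with constants depending only on $j$ and the fixed choice of $\phi$ and basis. This is where one must use that left-invariant derivatives of the rescaled bump cost exactly one factor $\delta^{-1}$ each, which follows from a compactness argument on $U_1$ in exponential coordinates.
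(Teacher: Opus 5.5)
Your proposal is correct and follows essentially the same route as the paper. The paper takes the Str\"{o}mbergsson--Venkatesh functions $f^{\pm}_\delta$ (a bump $k_\delta$ near the identity convolved with the indicators of the thickened set and of the complement of the thickened complement), and gets (3) from $\|\mathcal{D}f^{\pm}_\delta\|_\infty\leq\int_L|\mathcal{D}k_\delta|\,d\mu_{\mathrm{Haar}}\ll_j\delta^{-j}$, exactly your estimate; because you work at scale $\delta/2$ you even obtain $K\delta$ in (2), whereas the constant $2K\delta$ and the restriction $\delta\leq\frac12$ correspond to thickening at scale $\delta$, so that the error lies in $\partial_{2\delta}A$.
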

\begin{proof}
    Choose $f^{\pm}_\delta$ as defined in \cite[Lemma]{stromsmall}. (1) and (2) follow exactly as in the proof of \cite[Lemma 1]{stromsmall}. Note that the norm used in \cite[Lemma 1]{stromsmall} is the $L^2_k$ Sobolev norm, but we want bounds for the ${\mathcal{C}^j}$ norms. However, note that for a given differential operator $\mathcal{D}$ of order $j$, the estimate (see \cite[Section 2]{stromsmall} to see how $k_\delta$ are chosen) $$\| \mathcal{D}f_{\delta}^{\pm}\|_\infty\leq \int_{L} |\mathcal{D} k_{\delta}(g)| d\mu_{\text{Haar}}(g) \ll_j \delta^{-j} $$ immediately gives the required bound on the $\mathcal{C}^j$ norm of $f^{\pm}_\delta$. 
\end{proof}

\subsection{$A^{(r)}_t$ and $B^{(r)}_t$ are smooth}

The lemma below proves an important property of the sets $A^{(r)}_t \subset G/\Gamma$. 

\begin{lem}\label{smooth}

    \begin{enumerate}
    \item For $0<t\leq 1$, there exists a uniform $K>0$ such that the sets $A^{(r)}_t$ are $K$-smooth subsets of $G/\Gamma$ for all $r \in \mathbb{N}$ and all $0\leq t\leq 1$. 
    
    \item  For $t$ sufficiently large, there exists a $K'$ such that the sets $A^{(r)}_t$ are $K't^2$-smooth.
   \end{enumerate} 
\end{lem}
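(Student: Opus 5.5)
The plan is to bound $\mu(\partial_\varepsilon A^{(r)}_t)$ by the measure of the set of affine lattices having a point close to the boundary of $E_t$, and then to estimate that measure with the Siegel mean value formula, as in the proof of \lemref{r=1 vol}.

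First, I would show that if $\Ll\in\partial_\varepsilon A^{(r)}_t$, then $\Ll$ has a point in a thin neighbourhood of $\partial E_t$. By definition there are $g,h\in U_\varepsilon$ with $g\Ll\in A^{(r)}_t$ and $h\Ll\notin A^{(r)}_t$. Hence the counts $\#(g\Ll\cap E_t)$ and $\#(h\Ll\cap E_t)$ differ, so some lattice vector $v\in\Ll$ has $gv\in E_t$ but $hv\notin E_t$, or the other way round.

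The key quantitative input is a displacement estimate. For $[A,b]\in U_\varepsilon\subset G$ we have $\|A-1\|\ll\varepsilon$ and $\|b\|\ll\varepsilon$. For $v$ in a bounded region this gives $\|[A,b]v-v\|\ll\varepsilon(1+\|v\|)$. Any $v$ as above has $gv\in E_t$ or $hv\in E_t$, so $\|v\|\ll 1+t$. It follows that $v$ lies in the set
\[
N_{\varepsilon,t}:=\{w\in\R^2:\ \mathrm{dist}(w,\partial E_t)\le c\,\varepsilon(1+t)\},
\]
where $c$ is an absolute constant. Therefore $\partial_\varepsilon A^{(r)}_t\subset\{\Ll:\ \Ll\cap N_{\varepsilon,t}\neq\varnothing\}$, and this containment holds uniformly in $r$.

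Next, Siegel's formula on $G/\Gamma$ gives
\[
\mu(\partial_\varepsilon A^{(r)}_t)\le\int_{G/\Gamma}\hat\chi_{N_{\varepsilon,t}}\,d\mu=\mathrm{Vol}(N_{\varepsilon,t}).
\]
The boundary of $E_t$ has length $4t+2$. So $\mathrm{Vol}(N_{\varepsilon,t})\ll \varepsilon(1+t)(1+t)+\varepsilon^2(1+t)^2\ll\varepsilon(1+t)^2$ for $\varepsilon\le 1$. For $0<t\le1$ this is $\le K\varepsilon$ with $K$ absolute and independent of $r$, which proves (1). For large $t$ it is $\le K't^2\varepsilon$, which proves (2).

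The main obstacle is making the displacement estimate rigorous in the metric induced by the right-invariant Riemannian metric. The point is to control how $U_\varepsilon$ acts on vectors in $\R^2$ via the affine action, with constants independent of $t$. I would handle this by noting that for $\varepsilon\le1$ the neighbourhood $U_\varepsilon$ lies in a fixed compact set around the identity. On that set, $g\mapsto(A,b)$ is Lipschitz, which gives $\|A-1\|+\|b\|\ll d_G(g,e)$.

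One more point needs care: we must pass between $d_{G/\Gamma}(x,y)<\varepsilon$ and $y\in U_\varepsilon x$, and keep track of the two elements $g,h$. Using $x=g^{-1}(g\Ll)$, the composition $h g^{-1}$ lies in $U_{2\varepsilon}$, which only changes the constants. It is also possible that a lattice point enters or leaves $E_t$ without being near $\partial E_t$, for instance by approaching from infinity. This cannot happen, because the displacement bound applies to every vector of norm $\ll 1+t$.
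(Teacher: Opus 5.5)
Your proposal is correct and follows essentially the same route as the paper. You use the displacement bound $\|[A,b]v-v\|\ll\varepsilon(1+\|v\|)$ (the paper's Claim, with $C=3C_1M+3C_2$) to show that every affine lattice in $\partial_\varepsilon A^{(r)}_t$ has a point within $O(\varepsilon(1+t))$ of $\partial E_t$. You then apply Siegel's mean value formula and bound the area of that strip by $\ll\varepsilon(1+t)^2$, which gives a uniform $K$ for $t\le 1$ and $K't^2$ for large $t$. Your extra remarks (Lipschitz control of $g\mapsto(A,b)$ on $U_1$, and the a priori bound $\|v\|\ll 1+t$) only make explicit what the paper's Claim uses implicitly.
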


\begin{proof} The argument here is similar to the argument in \cite[Lemma 10]{stromsmall}. Let $R$ be a generic rectangle in $\R^2$ given by $R:=\biggl\{\mt{x}y: a \leq x \leq b \; \text{and} \; c\leq y\leq d\biggr\}$. Set $A_R^{(r)}:=\{ \mathcal{L} \in G/\Gamma: \#(\mathcal{L} \cap R)\geq r \}$. This extra notation is introduced to simplify the notations in the proof; later, we will set $R=E_t$ for $t>0$.

Let $\mathcal{L} \in \partial_{\varepsilon}(A_R^{(r)})$, then there exist $u_1, u_2 \in U_{\varepsilon} \subset G$ such that $u_1 \Ll \in A_R^{(r)}$ and $u_2\Ll \notin A_R^{(r)}$. Thus, $\Ll$ has  at least $r$ lattice points in $u_1^{-1}R$ and has strictly less than $r$ lattice points in $u_2^{-1}R$. Thus $\Ll$ has  at least one lattice point in the set $u_1^{-1}R\setminus u_2^{-1}R$.     \begin{claim}
        There exists $C>0$ (depending only on the metric on $G$ and $M=\sup_{x \in R} \|x\|$) such that for any $u_1, u_2 \in U_{\varepsilon}$, $$u_1 R \setminus u_2R \subset \partial_{C \varepsilon}R.$$ 
    \end{claim}

    \begin{proof}[Proof of claim]
    
    Let $x \in u_1R\setminus u_2R$. Then, $u_1^{-1} x \in R$ and $u_2^{-1} x \notin R$. Connect the line between $u_1^{-1}x$ and $u_2^{-1} x$. It intersects the boundary of the rectangle $R$, call it $y$.

    Then one can estimate the distance between $x$ and $y$ as
    \begin{align*}
        \|x-y\| & \leq \|x-u_1^{-1}x \| +\|u_1^{-1} x-y\| \\
                & \leq \|x-u_1^{-1}x\| + \|u_1^{-1}x -u_2^{-1}x\| \\
                & \leq \|x-u_1^{-1}x\| + \|u_1^{-1}x -x\| + \|x - u_2^{-1}x \| \\
                & \leq (C_1\varepsilon\|x\| + C_2\varepsilon)+ (C_1\varepsilon\|x\| + C_2\varepsilon) + (C_1\varepsilon\|x\| + C_2\varepsilon) \\
                &= 3C_1\varepsilon \|x\| + 3C_2 \varepsilon \\
                &\leq (3C_1 M +3C_2)\varepsilon.
    \end{align*}

    Set $C=3C_1M+3C_2$. Thus, $x \in \partial_{C\varepsilon}R$.     
    \end{proof}
    
    Following the proof of the claim above, note that in the case of the rectangles being $R=E_t$ for $0\leq t\leq1$, the constant $C$ in the above claim can be taken uniformly to be the constant that works for $E_1$. In the case when $t$ is large enough, one can bound the choice of $C$ in the above lemma by $\ll t$.

    Now let us complete the argument. Since $\Ll$ has a lattice point in $u_1^{-1}R \setminus u_2^{-1}R \subset \partial_{C\varepsilon} R$,  the measure of set of all such $\Ll$ is bounded above by the volume of the set $\partial_{C \varepsilon}R$ by Siegel's mean value formula for affine lattices, i.e., we have
   $$\mu(\partial_{\varepsilon}A_R^{(r)}) \leq \int_{G/\Gamma} \hat{\chi}_{\partial_{C\varepsilon}R}=\textrm{Leb}(\partial_{C\varepsilon}R)\leq C \textrm{perimeter}(R)\varepsilon.$$

   Since the perimeter of all $E_t$ for $0\leq t\leq1$ is bounded, we have for all $r \in \N$ and $0\leq t\leq 1$ $$\mu(\partial_\varepsilon A_t^{(r)}) \leq K\varepsilon$$ where $K$ is independent of $r$ and $0\leq t\leq 1$. When $t$ is large enough, note that $\textnormal{perimeter}(E_t)\ll t$ and $C \ll t$. Using the same argument as above, one now gets that $A^{(r)}_t$ is $K't^2$-smooth.
\end{proof}

\begin{lem}\label{smooth homcase}
    There exists a $K>0$ such that the sets $B^{(r)}_t$ are $Kt^{-1}$-smooth for all $r \in \N$ and $0< t \leq 1$.
\end{lem}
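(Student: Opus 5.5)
The plan is to follow the proof of \lemref{smooth}, working now in $G'/\Gamma'$ with the symmetric rectangle $R=\tilde{E}_t$. The one change is that Siegel's formula for $\mathrm{SL}_2$ counts only \emph{primitive} (or nonzero) lattice vectors, and this is where the loss of $t^{-1}$ will come from.

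First I will show that boundary points produce a lattice point in a thin region. Let $\Ll\in\partial_\varepsilon B^{(r)}_t$. Then there are $u_1,u_2\in U_\varepsilon\subset G'$ with $\#(\Ll\cap u_1^{-1}\tilde{E}_t)\ge 2r+1$ and $\#(\Ll\cap u_2^{-1}\tilde{E}_t)<2r+1$. Hence $\Ll$ has a nonzero point in $u_1^{-1}\tilde{E}_t\setminus u_2^{-1}\tilde{E}_t$; the point is nonzero because the origin lies in both sets. By the Claim inside the proof of \lemref{smooth} (now with no translation part, and with $M=\sup_{x\in\tilde{E}_t}\|x\|\le\sqrt2$ for $t\le1$), this set lies in $\partial_{C\varepsilon}\tilde{E}_t$ with $C$ uniform in $0<t\le1$ and in $r$. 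By symmetry the point may be replaced by a primitive vector $v$ with $kv\in\partial_{C\varepsilon}\tilde E_t$ for some $k\ge1$. A cleaner choice is to take the primitive vector $v$ on the line through the offending point: then $v$ itself lies in $\tilde E_t$, a convex set containing $0$, and I then need to control where it sits.

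Second, I will bound the measure by the Siegel formula. Since $\Ll\cap\tilde{E}_t$ has at least $2r+1\ge3$ points in a centrally symmetric convex set, it contains a nonzero vector. Moreover, all lattice points in $\tilde E_t$ that are close to its boundary lie on one line through $0$ once $t$ is small, because $\Ll$ is unimodular and a thin box of area $4t\le 4$ cannot contain two independent short vectors. For such lattices the points of $\Ll\cap\tilde E_t$ are $kv$ with $|k|\le r'$. The offending point is $kv\in\partial_{C\varepsilon}\tilde E_t$ with $|k|\ge r$, so $v\in k^{-1}\partial_{C\varepsilon}\tilde E_t$. Summing over $k$ and using Siegel's formula $\int\sum_{v\text{ prim}}\chi_S(g v)\,d\mu'=\zeta(2)^{-1}\textrm{Leb}(S)$ gives
\[
\mu'(\partial_\varepsilon B^{(r)}_t)\ll\sum_{k\ge1}k^{-2}\,\textrm{Leb}(\partial_{C\varepsilon}\tilde E_t)\ll \varepsilon.
\]

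This already looks stronger than $Kt^{-1}\varepsilon$. The trouble, and the main obstacle, is that the Claim's neighbourhood $\partial_{C\varepsilon}\tilde E_t$ is only meaningful when $C\varepsilon\lesssim t$. For $\varepsilon\gg t$ the $C\varepsilon$-neighbourhood of the short sides swallows the whole box and more, although its area is still $\ll\varepsilon$. More seriously, the sheared box $u^{-1}\tilde E_t$ differs from $\tilde E_t$ by a region of width $\varepsilon\cdot(\text{height }1)$ in the $x$-direction, which relative to the width $t$ is what forces the loss. To handle this uniformly I will simply bound $\mu'(\partial_\varepsilon B^{(r)}_t)$ by $\mu'(\partial_\varepsilon B)\le1$ when $\varepsilon\ge t$, which is $\le t^{-1}\varepsilon$. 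For $\varepsilon<t$ I will use the estimate above. If summing over $k$ fails because the points need not be collinear, I will fall back on the cruder bound: for $t\le1$ there are at most $O(1)$ primitive directions in the box, so the number of lattice points in a $C\varepsilon$-strip is controlled by the one-vector Siegel bound times $O(t^{-1})$. That gives $\mu'(\partial_\varepsilon B^{(r)}_t)\le Kt^{-1}\varepsilon$ for all $0<\varepsilon\le1$, which is the stated $Kt^{-1}$-smoothness.
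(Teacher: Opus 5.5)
Your argument is correct and follows the same route as the paper. The Claim from \lemref{smooth} puts a nonzero point of $\Ll$ in $\partial_{C\varepsilon}\tilde{E}_t$, with $C$ uniform in $0<t\le 1$. Siegel's mean value theorem then bounds $\mu'(\partial_\varepsilon B^{(r)}_t)$ by the area of that shell, and the trivial bound $\mu'(\partial_\varepsilon B^{(r)}_t)\le 1$ for $\varepsilon\gtrsim t$ supplies the factor $t^{-1}$.

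The one real difference is how you show the point is nonzero. The paper restricts to $\varepsilon\le t/(1000C)$ so that the shell misses the origin, and this restriction is the only source of the $t^{-1}$. Your observation that $0\in u_2^{-1}\tilde{E}_t$ works for every $\varepsilon$. Since $\textrm{Leb}(\partial_{C\varepsilon}\tilde{E}_t)\ll\varepsilon$ uniformly for $0<t\le1$ and $0<\varepsilon\le1$ (the perimeter is at most $8$), your first instinct was right: you actually get uniform $K$-smoothness, and the case split is superfluous. Your heuristic that the shear ``forces'' a loss relative to the width $t$ is mistaken, because Siegel's formula only sees absolute area and the sliver has area $\ll\varepsilon$.

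The primitive-vector/collinearity detour and the fallback can be dropped. Writing a nonzero $w\in\Ll$ as $kv$ with $v$ primitive and taking a union bound over $k\ge1$ is valid for every lattice, not only collinear configurations. The identity $\sum_{k\ge1}\zeta(2)^{-1}k^{-2}=1$ then simply recovers the nonzero-vector formula $\int_{G'/\Gamma'}\sum_{v\in\Ll\setminus\{0\}}\chi_S(v)\,d\mu'(\Ll)=\textrm{Leb}(S)$, which the paper applies directly. The collinearity claim would in any case fail for $t$ near $1$, e.g.\ $(1,0),(0,1)\in\Z^2\cap\tilde{E}_1$.
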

\begin{proof}
The argument here is similar to the argument in \cite[Lemma 4]{stromsmall} and to the previous Lemma. Let $\mathcal{L} \in \partial_{\varepsilon}(B_{\tilde{E}_{t}}^{(r)})$, then there exists $u_1, u_2 \in U_{\varepsilon} \subset G'$ such that $u_1 \Ll \in B_{\tilde{E}_t}^{(r)}$ and $u_2\Ll \notin B_{\tilde{E}_t}^{(r)}$. Thus, $\Ll$ has  at least $2r+1$ lattice points in $u_1^{-1}\tilde{E}_{t}$ and has strictly less than $2r+1$ lattice points in $u_2^{-1}\tilde{E}_{t}$. Thus $\Ll$ has  at least one lattice point in the set $u_1^{-1}\tilde{E}_{t}\setminus u_2^{-1}\tilde{E}_{t}$. Note that by an argument similar to Claim of the previous lemma one sees that $u_1^{-1}\tilde{E}_{t}\setminus u_2^{-1}\tilde{E}_{t} \subset \partial_{C\varepsilon}(\tilde{E}_{t})$,  where $C$ only depends on the metric on $G'$ and $M:=\sup_{x \in \tilde{E}_{t}} \|x\|$ (since for any $u \in U_{\varepsilon} \subset G'$ one has $\|ux-x\| \ll\varepsilon\|x\|$ for all $x \in \R^2$). Thus $C$ can be taken uniform for all $\tilde{E}_{t}$ for $0<t \leq 1$ (can choose a $C\geq 1$ that works for $\tilde{E}_1$). For small enough $\varepsilon$, the set $\partial_{C\varepsilon}\tilde{E}_{t}$ misses the origin. One can take $\varepsilon$ such that $0<\varepsilon\leq \frac{t}{1000C}$. For such small $\varepsilon$, one has by Siegel's mean value theorem  $$\mu'(\partial_{\varepsilon}B^{(r)}_{\tilde{E}_t}) \leq \int_{G'/\Gamma'}\hat{\chi}_{\partial_\varepsilon(\tilde{E}_t)}d\mu'=\textnormal{Vol}(\partial_{C\varepsilon}(\tilde{E}_t)) \leq 4C\varepsilon. $$
So for all $\varepsilon\leq 1$, one can bound $\mu'(\partial_{\varepsilon}B^{(r)}_{\tilde{E}_t}) \leq 4C(\frac{1000C}{t})\varepsilon=Kt^{-1}\varepsilon$. 
\end{proof}
    
\subsection{A Wiener norm estimate}
Here is an approximation lemma for the intervals $[\alpha, \beta] \subset [0,1]$ by smooth functions with bounded Wiener norms. We will need this in \cref{loglawwithoutevl} to prove the logarithm law, so we prove it here. Given a function $h$ on $\T=\R/\Z$, we set the Wiener norm of $h$ to be $$\|h\|_W:=\sum_{n \in \Z} |\hat{h}(n)|, $$
Where $\hat{h}(n):=\int_{0}^{1}h(x)e^{-2\pi inx}dx$.
\begin{lem} \label{apr}
    Let $[\alpha, \beta]$ be a sub interval of $[0, 1)=\R/\Z$. Given a sufficiently small $\delta>0$ we have a smooth function $\varphi_\delta$ with the following properties:

    \begin{enumerate}
        \item $\varphi_\delta: \R/\Z \to [0,1]$ is smooth.
        \item $\varphi_\delta=1$ on $[\alpha, \beta]$ and $0$ outside a $2\delta$ neighbourhood of $[\alpha, \beta]$.
        \item There exists a constant $K>0$ (independent of $[\alpha, \beta]$) such that $\|\varphi_\delta\|_W \leq K \delta^{-1}$.
    \end{enumerate}
\end{lem}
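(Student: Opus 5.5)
We construct $\varphi_\delta$ by mollifying the indicator of a slightly enlarged interval, and bound the Wiener norm through the standard Sobolev-type estimate $\|h\|_W\leq |\hat h(0)|+\big(\sum_{n\neq0}n^{-2}\big)^{1/2}\big(\sum_{n\neq 0}n^2|\hat h(n)|^2\big)^{1/2}\ll \|h\|_{L^1}+\|h'\|_{L^2}$, which is Cauchy--Schwarz followed by Parseval.

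\textbf{Construction.} Fix once and for all a smooth bump $\rho:\R\to[0,\infty)$ supported in $[-1,1]$ with $\int\rho=1$, and set $\rho_\delta(x)=\delta^{-1}\rho(x/\delta)$, periodized to $\T$ (for $\delta<1/2$ this is harmless). Let $I_\delta=[\alpha-\delta,\beta+\delta]$, viewed as an arc of $\T$. If it wraps around the whole circle, i.e. $\beta-\alpha+2\delta\geq1$, we simply take $\varphi_\delta\equiv 1$. We then define
\[
\varphi_\delta:=\chi_{I_\delta}*\rho_\delta .
\]

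\textbf{Properties (1) and (2).} $\varphi_\delta$ is smooth because $\rho_\delta$ is. It takes values in $[0,1]$ because $\rho_\delta\geq0$ has integral $1$. For $x\in[\alpha,\beta]$ we have $x-y\in I_\delta$ whenever $|y|\leq\delta$, so $\varphi_\delta(x)=1$. If $x$ lies outside the $2\delta$-neighbourhood of $[\alpha,\beta]$, then $x-y\notin I_\delta$ for every $|y|\leq \delta$, so $\varphi_\delta(x)=0$.

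\textbf{Property (3).} $\|\varphi_\delta\|_{L^1}\leq 1$, so $|\hat\varphi_\delta(0)|\leq1$. For the derivative, $\varphi_\delta'=\chi_{I_\delta}*\rho_\delta'$, and it can also be written as $\rho_\delta(x-\alpha+\delta)-\rho_\delta(x-\beta-\delta)$. Its $L^2$ norm is therefore at most $2\|\rho_\delta\|_{L^2}=2\delta^{-1/2}\|\rho\|_{L^2}$. This gives
\[
\|\varphi_\delta\|_W\leq 1+C\delta^{-1/2}\leq K\delta^{-1}
\]
for $\delta\leq1$, with $K$ independent of $[\alpha,\beta]$.

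The only step needing care is property (3), and it comes down to the Cauchy--Schwarz/Parseval bound stated at the start. The key point is that the derivative is a difference of two translated bumps, which makes its $L^2$ norm independent of the interval. This actually yields the stronger bound $O(\delta^{-1/2})$.
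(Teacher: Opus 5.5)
Your proof is correct. The construction is the same as the paper's: you mollify the indicator of $[\alpha-\delta,\beta+\delta]$ by a bump of width $\delta$. The difference is in how you bound the Wiener norm.

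\textbf{The paper's route.} It argues coefficient by coefficient. It uses $|\hat{\chi}_{[\alpha-\delta,\beta+\delta]}(n)|\le C/|n|$, uniformly in the interval, and $|\hat g_\delta(n)|=|\hat g(\delta n)|\le C'/(\delta|n|)$, which holds since $g\in C^1$. This gives $|\hat\varphi_\delta(n)|\le C''/(\delta n^2)$, and summing yields $K\delta^{-1}$.

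\textbf{Your route.} You use the Sobolev-type bound $\|h\|_W\ll \|h\|_{L^1}+\|h'\|_{L^2}$, obtained from Cauchy--Schwarz and Parseval. You combine it with the observation that
\[
\varphi_\delta'=\rho_\delta(\cdot-\alpha+\delta)-\rho_\delta(\cdot-\beta-\delta)
\]
is a difference of two translated bumps. Hence $\|\varphi_\delta'\|_{L^2}\le 2\delta^{-1/2}\|\rho\|_{L^2}$, independently of the length of the interval.

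\textbf{Comparison.} Your route gives the sharper bound $O(\delta^{-1/2})$ and needs no decay estimate for $\hat\rho$. The paper's route is purely pointwise on coefficients. If one also retained the trivial bound $|\hat\varphi_\delta(n)|\le C/|n|$ for $|n|\le \delta^{-1}$, it would even give $O(\log(1/\delta))$. You also handle the wrap-around case $\beta-\alpha+2\delta\ge 1$ explicitly, which the paper passes over. For the application in Lemma~\ref{int}, where $\delta=s^{-2}$ and $\|\varphi_\delta\|_W$ enters only polynomially, either bound suffices.
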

\begin{proof}
    Let $g:\R \to \R$ be a $C_c^{\infty}(\R)$ function such that $\mathrm{supp}(g)\subset[-1,1]$ and $\int_{\R}g=1$. Set $g_\delta:=\frac{1}{\delta}g(\frac{x}{\delta})$. Clearly, $g_\delta$ is supported in $[-\delta,\delta]$ and $\int_\R g_\delta=1$. Define $\varphi_\delta(x):=(\chi_{[\alpha-\delta, \beta+\delta]}\star g_\delta)(x)$. Then it is easy to see that $\varphi_\delta$ is smooth and satisfies (2).\\

    To see that (3) is also satisfied, note for non-zero $n\in \Z$, the Fourier coefficients of a characteristic function satisfy
    
    \begin{equation}\label{a}|\hat{\chi}_{[\alpha-\delta, \beta+\delta]}(n)|\leq \frac{C}{|n|}.\end{equation}

    Moreover, $\hat{g}_\delta(n)=\hat{g}(\delta n)$. Since $g$ is smooth, $\hat{g}(\xi)$ decays faster than any polynomial in $|\xi|$. In particular, for non-zero $n\in \Z$ (just assuming $g$ is $C^1$ is enough for this decay) we have, \begin{equation}\label{b}|\hat{g}_\delta(n)|=|\hat{g}(\delta n)|\leq \frac{C'}{\delta|n|}.\end{equation}

    Combining \eqref{a}, \eqref{b} and noting that $\hat{\varphi}_\delta(n)=\hat{\chi}_{[\alpha-\delta, \beta+\delta]}(n)\hat{g}_\delta(n)$ we have for non-zero $n \in \Z$ $$|\hat{\varphi}_\delta(n)|\leq \frac{C''}{|n|^2\delta}.$$

    So, $\|\varphi_\delta\|_W=\sum_{n\in \Z} |\hat{\varphi}_\delta(n)| \leq K \delta^{-1}$.\end{proof}

\section{Effective multiple equidistribution}\label{sec:effe_equi}
In this section, we apply \cite[Theorem 2.1]{BMGA} to get an effective multiple equidistribution result suitable for our purpose. 
\subsection{An effective equidistribution result}\label{stromberg}
Given a $\xi \in \T$, note that the $U$ orbit of the coset $\Big[1_2, \mt{\xi}{0}\Big]\Gamma$ is periodic with period $1$. Let $\nu$ be the pushforward of the Lebesgue measure supported on $[0,1]$ to the periodic $U$ orbit, i.e., for any bounded continous function $f$ on $X$, we have $$\int_{X} f \;d\nu=\int_{0}^{1}f\Big(u(x)\Big[1_2, \mt{\xi}{0}\Big]\Gamma\Big)dx.$$ 
One would like to understand the behaviour of the measures $(a_T)_{*}(\nu)$ on $X$ as $T \to \infty$ (here $(a_T)_{*}(\nu)$ is the pushforward measure of $\nu$ by the action of $a_T$). By an application of Ratner's measure classification theorem, one has that for $\xi \in \T$ irrational \begin{equation}
\label{rat}
(a_T)_{*}(\nu) \xrightarrow{\textnormal{w*}}\mu \quad \textnormal{as} \; T \to \infty.\end{equation}
See \cite[Theorem 5.1]{marklof2010distribution} for a proof of \eqref{rat} assuming a result of Shah \cite[Theorem 1.4]{shah1996limit}.
Given \eqref{rat}, it is natural to ask about the rate of convergence in \eqref{rat}. This was answered by Str\"{o}mbergsson in \cite[Theorem 1.2]{strom}. The following theorem is a consequence of \cite[Theorem 1.2]{strom}:

\begin{thm}\label{stt}
    For $\xi \in \T$ of Diophantine type, there exists $C>0$ and $\delta_1>0$ ($C$ and $\delta_1$ depending only on $\xi$) such that for all $0\leq\alpha< \beta \leq1$ and $f \in C^{8}_b(X), \; T\geq 1$ we have 
    \begin{equation}\label{st}
    \Bigg|\frac{1}{\beta-\alpha}\int_{\alpha}^{\beta}f\Big(a_Tu(x)\Big[1_2, \mt{\xi}{0}\Big]\Gamma\Big)\,dx-\int_{X}f\,d\mu \Bigg|\leq \frac{C}{(\beta-\alpha)T^{\delta_1}}\|f\|_{\mathcal{C}^8}.
    \end{equation}
\end{thm}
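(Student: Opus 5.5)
The plan is to deduce \thmref{stt} from Str\"ombergsson's effective equidistribution theorem \cite[Theorem 1.2]{strom}. That theorem treats a Diophantine shift $\xi$ and a weight function on the $U$-orbit. From it we will get a bound on $\int_0^1 \rho(x) f(a_T u(x)[1_2,\bm{\xi}]\Gamma)\,dx - \int_0^1\rho \int_X f\,d\mu$. The error will be of the shape $\|\rho\|_{W^{1,1}}$ (or a comparable $C^1$-type norm) $\cdot\, \|f\|_{C^8_b}\, T^{-\delta_1}$, with $\delta_1$ and the implied constant depending only on the Diophantine data $(c,M)$ of $\xi$. Concretely, Str\"ombergsson's statement is for $\rho\in C^\infty_c$ on an interval, and the decay rate is some explicit power of $T$ depending on $M$. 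I will take $\delta_1$ to be a fixed fraction of that exponent, to leave room for smoothing losses.

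\textbf{Smoothing the indicator.} The natural weight is $\rho=\chi_{[\alpha,\beta]}$, but it is not smooth, so I will smooth it. For $0<\eta\le (\beta-\alpha)/4$, choose $\rho_\eta^{\pm}\in C_c^\infty(\R)$ with $0\le\rho^-_\eta\le\chi_{[\alpha,\beta]}\le\rho^+_\eta\le 1$, $\|\rho^\pm_\eta-\chi_{[\alpha,\beta]}\|_{L^1}\le 2\eta$, and $\|(\rho^\pm_\eta)'\|_{L^1}\le 2$. These can be convolutions with $g_\eta$ as in \lemref{apr}. The derivative bound is uniform in $\eta$, since a monotone ramp has total variation $1$. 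If the cited theorem needs higher derivatives of $\rho$, each costs a factor $\eta^{-1}$.

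We may assume $f$ is real; otherwise split into real and imaginary parts. Then $f$ is not necessarily nonnegative, so instead of sandwiching I bound directly:
\[
\Bigl|\int (\chi_{[\alpha,\beta]}-\rho^+_\eta)(x) f(a_Tu(x)[1_2,\bm\xi]\Gamma)\,dx\Bigr|\le 2\eta\|f\|_\infty .
\]
Applying Str\"ombergsson's bound to $\rho^+_\eta$ gives an error $\ll \eta^{-k}\|f\|_{C^8}T^{-\delta_0}$. Also $\bigl|\int\rho^+_\eta-(\beta-\alpha)\bigr|\le2\eta$. Summing these,
\[
\Bigl|\int_\alpha^\beta f(a_Tu(x)\cdot)\,dx-(\beta-\alpha)\mu(f)\Bigr|\ll \|f\|_{C^8}\bigl(\eta+\eta^{-k}T^{-\delta_0}\bigr).
\]
Now optimize: take $\eta=T^{-\delta_0/(k+1)}$ when this is at most $(\beta-\alpha)/4$. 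The bound becomes $\ll\|f\|_{C^8}T^{-\delta_1}$ with $\delta_1=\delta_0/(k+1)$, and dividing by $\beta-\alpha$ yields \eqref{st}.

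\textbf{The degenerate case.} Suppose instead $(\beta-\alpha)<4T^{-\delta_1}$. Then the left side of \eqref{st} is trivially at most $2\|f\|_\infty\le 2\|f\|_{C^8}$. This is itself $\le \frac{8}{(\beta-\alpha)T^{\delta_1}}\|f\|_{C^8}$, so enlarging $C$ covers this case too. Note also that $\|f\|_{C^8}$ dominates whatever Sobolev norm appears in \cite{strom}, since $X$ has finite volume and sup-norms control $L^2$-norms of derivatives. If that paper uses a norm with weights at the cusp, I would check that bounded $C^8$ functions still qualify.

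\textbf{Main obstacle.} I expect the real difficulty to be matching hypotheses: confirming that \cite[Theorem 1.2]{strom} indeed gives a power saving, uniform in $f$ via a Sobolev norm of order at most $8$, for every $\xi$ satisfying $|\xi-p/q|\ge c q^{-M}$. The reason this is delicate is that his theorem is phrased with a general weight and shift, and the rate is coupled to the Diophantine exponent. The smoothing-and-optimization step above is routine.
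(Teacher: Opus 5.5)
Your proposal is correct and is essentially the paper's argument. The paper simply deduces \thmref{stt} from \cite[Theorem 1.2]{strom}, using only two facts: the norms $\|\cdot\|_{\mathcal{C}^8}$ and Str\"ombergsson's $\|\cdot\|_{C^8_b}$ are comparable, and $\bm{\xi}=\mt{\xi}{0}$ inherits the Diophantine type of $\xi$, so the discussion after that theorem gives the power saving $T^{-\delta_1}$. Your smoothing-and-optimization layer and the degenerate-interval case are harmless but, judging from how the paper applies the citation, unnecessary: the paper takes the cited result to already cover the sharp normalized average $\frac{1}{\beta-\alpha}\int_\alpha^\beta$, with the $\frac{1}{\beta-\alpha}$ loss built into the error term.
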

A few remarks are in order:
\begin{enumerate} 

\item Note that the norm $\| \cdot\|_{\mathcal{C}^8}$ used in this article is comparable to the norm $\| \cdot \|_{C^8_b}$ used in \cite{strom} (i.e., there exist constants $c, C>0$ such that $c\|f \|_{\mathcal{C}^8}\leq \|f \|_{C^{8}_b} \leq C \| f \|_{\mathcal{C}^8}$). So one can replace the norm $\| \cdot\|_{C^8_b}$ by $\|\cdot\|_{\mathcal{C}^8}$ up to a loss of a constant, as we have done in \eqref{st}. 

\item  Moreover if $\xi \in \T$ is of Diophantine type $M\geq 2$, then so is the vector $\bm{\xi}:=\mt{\xi}{0}$. Hence, the discussion after \cite[Theorem 1.2]{strom} applies in our case, which gives \eqref{st}. %a decay in the term $b_{\bm{\xi}, 1}(\cdot)$, giving us  
 
\end{enumerate}
\subsection{Spectral gap and effective mixing}
It is well known that the $G'$ action on $L_0^2(G/\Gamma)$ is exponentially mixing (see \cite[Theorem 4.5]{KWa}). Thus, we obtain the following:

\begin{thm} \label{mixing}
There exists a $l \in \N$, and $\tau, E >0$ such that for all $\psi, \varphi \in C^{2, l}(X)$ and $x \in \R$ we have \begin{equation}
    \Bigg|\int_{X} \psi(u(-x)y)\varphi(y)\,d\mu(y)-\int_{X}\psi  \,d\mu\int_{X}\varphi \,d\mu \Bigg|\leq E \textnormal{max}\{{1, |x|}\}^{-\tau} \|\psi\|_{L^2_l}\|\varphi\|_{L^2_l}.
\end{equation}
\end{thm}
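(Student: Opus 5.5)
The plan is to deduce this from exponential decay of matrix coefficients for the $G'$-action on $L^2_0(X)$, in the form quoted from \cite[Theorem 4.5]{KWa}. The key input is that the unitary representation of $G'=\mathrm{SL}_2(\R)$ on $L^2_0(G/\Gamma)$ has a spectral gap, i.e. it is isolated from the trivial representation. The subtlety relative to the $\mathrm{SL}_2$ case is the unipotent radical $\R^2$. I would decompose $L^2_0(X)$ into two pieces: functions pulled back from $X'=G'/\Gamma'$ via $\pi$, and the orthogonal complement consisting of functions with nontrivial Fourier modes along the torus fibres $\R^2/\Z^2$. On the first piece, spectral gap is Selberg's $\lambda_1\geq 3/16$ for $\mathrm{SL}_2(\Z)$. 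On the second piece, Mackey theory shows that the relevant $G'$-representations are induced from stabilizers of nonzero characters of $\R^2$. These are unipotent subgroups conjugate to $U$, so the representations are tempered, and in particular have no almost invariant vectors.

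With spectral gap in hand, I would invoke the standard quantitative decay of matrix coefficients for $\mathrm{SL}_2(\R)$ (Howe–Moore with rates, e.g. via Cowling–Haagerup–Howe or Ratner's computation on $K$-finite vectors). This gives, for $K$-finite vectors $\psi,\varphi\in L^2_0$ of $K$-types bounded by $\dim$,
\[
|\langle g\psi,\varphi\rangle|\ll \Xi_{G'}(g)^{\kappa}\|\psi\|\|\varphi\|,
\]
where $\Xi_{G'}$ is the Harish-Chandra function and $\kappa>0$ depends on the gap. A Sobolev-norm argument then removes the $K$-finiteness: expand in $K$-types and sum, using $\|\cdot\|_{L^2_l}$ with $l$ large enough to absorb the growth in the $K$-type dimensions. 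This yields $|\langle g\psi_0,\varphi_0\rangle|\leq E\,\Xi(g)^{\kappa}\|\psi\|_{L^2_l}\|\varphi\|_{L^2_l}$ for the mean-zero parts $\psi_0=\psi-\int\psi$ and $\varphi_0=\varphi-\int\varphi$.

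The last step specializes to $g=u(-x)$. Since $\|u(-x)\|\asymp\max\{1,|x|\}$, the Cartan decomposition gives $u(-x)=k_1a_sk_2$ with $s\asymp\max\{1,|x|\}$. Using the standard bound $\Xi(a_s)\ll s^{-1}\log(1+s)$, we obtain decay $\ll\max\{1,|x|\}^{-\tau}$ for any $\tau<\kappa$. Expanding $\int\psi(u(-x)y)\varphi(y)\,d\mu$ as the product of means plus $\langle u(x)\psi_0,\varphi_0\rangle$ then gives the claimed inequality. Here $C^{2,l}(X)$ is read as the space of functions with finite $L^2_l$ Sobolev norm.

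I expect the main obstacle to be the spectral gap on the non-$\pi$-pullback part, i.e. the Mackey analysis of the semidirect product. To avoid redoing it, I would simply cite \cite[Theorem 4.5]{KWa}, where exactly this statement for $\mathrm{ASL}_2$ is established, and only carry out the routine translation from $a_s$-decay to $u(x)$-decay described above.
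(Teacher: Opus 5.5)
Your proposal is correct and follows the paper's route: the paper's only argument is to cite exponential mixing of the $G'$-action on $L^2_0(G/\Gamma)$ from \cite[Theorem 4.5]{KWa}, which is where you also end up. Your spectral-gap/Mackey sketch and your Cartan-decomposition step, which turns the bound $\Xi(a_s)\ll s^{-1}\log(1+s)$ into decay $\max\{1,|x|\}^{-\tau}$ for $u(-x)$, make explicit what the paper's ``Thus, we obtain the following'' leaves implicit.
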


Here $C^{2,l}(X)$ denotes the subset of smooth functions $f \in L^2(X)$ such that for $l\in \N$ we have $$\|f\|^2_{L^2_l}:=\sum_{0\leq\textnormal{ord}(Z)\leq l}\|\mathcal{D}_Z(f)\|_2^{2}<\infty,$$ where the above sum runs through monomials in $\{X_1, \ldots, X_5\}$ of order less than or equal to $l$.

\subsection{Effective multiple equidistribution}
In this subsection, we prove effective multiple equidistribution of expanding translates (Theorem \autoref{multiple}) applying \cite[Theorem 2.1]{BMGA}. Before going into details, we set some notations. 

\subsubsection{Notations}\label{notations}
Set $k:=\max\{8,l\}$ (where $l$ is as in Theorem \autoref{mixing}). For a function $\varphi$ on $\T$ (i.e., a function on [0, 1)). We define the Wiener norm of $\varphi$ to be $$\|\varphi\|_{W}:=\sum_{n\in \Z} |\hat{\varphi}(n)|$$ where we have $\hat{\varphi}(n):=\int_{0}^1\varphi(y)e^{-2\pi iny}\,dy$. Let $W(\textnormal{Leb})$ denotes the set of all functions on $\T$ such that $\|\varphi\|_W<\infty$.\\

Given an element $a_T \in A$ define $$\|a_T\|_{*}:=\textnormal{max}\left\{T, \frac{1}{T}\right\}.$$ By abuse of notation, we will denote $\|a_T\|_{*}$ as $\|T\|_{*}$ from here onwards. Clearly $\|T\|_{*} \geq 1$ for all $a_T \in A$. Given $(a_{T_1}, a_{T_2}, \ldots, a_{T_r}) \in A^r$, define
$$m_r(a_{T_1}, a_{T_2}, \ldots, a_{T_r}):=\min_{1\leq i\neq j\leq r}\left\{\left\|\frac{T_i}{T_j}\right\|_{*}\right\}.$$ Moreover, set $$\rho_{r}(a_{T_1}, a_{T_2}, \ldots, a_{T_r}):=\min_{i=1,\dots,r}\left\{\|T_i\|_{*}\right\}$$ and 
\begin{equation}\label{equ:delta_r}
    \mathfrak{D}_r(a_{T_1}, a_{T_2}, \ldots, a_{T_r}):=\min \left\{\rho_{r}(a_{T_1}, a_{T_2}, \ldots, a_{T_r} ), \;m_r(a_{T_1}, a_{T_2}, \ldots, a_{T_r})\right\}.
\end{equation}
Strictly speaking, note that the definition of $\mathfrak{D}_r$ given above is slightly different from the definition of $\Delta_r$ given in \cite[Section 3.3 \& Section 2]{BMGA}, but they are both the same up to a power. The following is the main theorem of this section:
\begin{thm} \label{multiple}
      Fix $\xi \in \T$ of Diophantine type. Then for every $r\geq1$ there exists $D_r>0$ and $\delta_r>0$ (which only depend on $\xi$ and r) such that for all $f_1, f_2, \ldots,f_r \in C^{\infty}_b(X)$ and $\varphi \in W(\textnormal{Leb})$, we have for every $(a_{T_1}, a_{T_2}, \ldots, a_{T_r})\in (A^{+})^r $ 
\begin{multline}
        \Bigg|\int_{0}^1 \varphi(x) \prod_{j=1}^rf_j(a_{T_j}u(x)[1_2, \bm{\xi}]\Gamma) \,dx -\int_{0}^1 \varphi \,dx \prod_{j=1}^r\int_{X}f_j \,d\mu \Bigg|  \\
        \leq D_r \mathfrak{D}_r(a_{T_1}, a_{T_2}, \ldots, a_{T_r})^{-\delta_r} \|\varphi\|_{W}\prod_{j=1}^r\|f_j\|_{\Cc^k}  .
\end{multline}
\end{thm}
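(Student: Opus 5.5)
The plan is to derive \thmref{multiple} from the abstract multiple equidistribution framework of \cite[Theorem 2.1]{BMGA}, feeding it two inputs already in hand. The single equidistribution input is \thmref{stt}. The decay of correlations input is \thmref{mixing}. The argument is an induction on $r$ in the style of Björklund–Gorodnik: order the times so that $T_1\le T_2\le\dots\le T_r$, and exploit the fact that the $U$-orbit through $[1_2,\bm{\xi}]\Gamma$ is expanded by $a_T$.

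First reduce to the case of $\varphi$ a character. If $\varphi(x)=\sum_n\hat\varphi(n)e^{2\pi i n x}$, then by linearity it suffices to prove the bound for $\varphi=e_n(x):=e^{2\pi i n x}$ with a constant independent of $n$. For $n\neq0$ the target main term is $0$, and summing over $n$ produces exactly the factor $\|\varphi\|_W$. For $r=1$ and a character $e_n$, partition $[0,1]$ into intervals $I$ of length $\eta$. On each $I$ replace $e_n$ by its value at the left endpoint, at cost $\ll |n|\eta$. Then apply \thmref{stt} on each $I$, which costs $\ll \eta^{-1}T^{-\delta_1}\|f\|_{\mathcal C^8}$ per interval, hence $\eta^{-1}T^{-\delta_1}$ in total after weighting. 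This $|n|$-dependence is bad, so instead handle $n\neq 0$ differently: since $\int_0^1 e_n=0$, one subtracts $\int f\,d\mu$ and the error is the same equidistribution error. More precisely, write $e_n$ as a step function plus an error of size $|n|\eta$, and choose $\eta=T^{-\delta_1/2}$. For $|n|\le T^{\delta_1/4}$ this gives decay $T^{-\delta_1/4}$. For large $|n|$ use the trivial bound $\|f\|_\infty\le \|f\|_{\mathcal C^k}$, which is also fine since we only need the bound uniform after multiplying by $|\hat\varphi(n)|$. I expect this to mesh with how \cite{BMGA} handles Wiener-norm weights, so I would simply verify that their hypotheses (effective equidistribution of $a_T$-translates of the weighted orbit measure, in Wiener norm, plus exponential mixing) hold and cite their conclusion.

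The inductive step, for $r\ge2$ and $T_1\le\dots\le T_r$, goes as follows. Partition $[0,1]$ into intervals of length $\eta$ with $T_{r-1}^{-1}\ll\eta$. On each such interval, $a_{T_j}u(x)$ for $j<r$ moves by at most $\eta T_j^2$, so for $\eta\ll T_{r-1}^{-2}\cdot(\text{small})$ we may freeze $f_1,\dots,f_{r-1}$ at the left endpoint, at cost $\eta T_{r-1}^2\prod\|f_j\|_{\mathcal C^1}$. On each interval the remaining integral of $f_r(a_{T_r}u(x)\cdot)$ equidistributes by \thmref{stt}, with error $\eta^{-1}T_r^{-\delta_1}$. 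Choose $\eta=T_{r-1}^{-2}(T_r/T_{r-1})^{-\sigma}$ for small $\sigma$. Then both errors are powers of $\min\{T_r/T_{r-1},T_1\}$, which is at most a power of $\mathfrak D_r$, provided $\delta_1$ dominates $2\log T_{r-1}/\log T_r$. That is not automatic, and this is exactly where the mixing input (\thmref{mixing}) enters, as in \cite{BMGA}. Instead of equidistributing $f_r$ alone, one thickens the frozen product $\prod_{j<r}f_j(a_{T_j}\cdot)$ into a smooth function on $X$ and uses mixing along $a_{T_r/T_{r-1}}$. After this step the induction hypothesis handles $r-1$ functions with the product weight $\varphi\cdot(\text{const})$, yielding $\int f_r\,d\mu$ times the $(r-1)$-fold average.

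The main obstacle is this last balancing. The frozen functions have $\mathcal C^k$-norms growing like $T_{r-1}^{O(k)}$, while the single equidistribution rate $T^{-\delta_1}$ from \thmref{stt} is weak. I would first try to verify the abstract hypotheses of \cite[Theorem 2.1]{BMGA} verbatim: the $U$-orbit is expanded by $A^+$, \thmref{stt} gives effective equidistribution of intervals of length $\ge T^{-\delta_1/2}$, and \thmref{mixing} gives polynomial decay. Their theorem then outputs a bound in terms of their $\Delta_r$, which equals $\mathfrak D_r$ up to a power, and this gives the stated $D_r\mathfrak D_r^{-\delta_r}\|\varphi\|_W\prod\|f_j\|_{\mathcal C^k}$ with $k=\max\{8,l\}$.
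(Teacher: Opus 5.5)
Your final plan (verify the hypotheses of \cite[Theorem 2.1]{BMGA} from \thmref{stt} and \thmref{mixing}, then convert their $\Delta_r$ into $\mathfrak{D}_r$ up to a power) is exactly the paper's proof. Concretely, the paper does the following. It takes $\mathcal{A}=C^\infty_b(X)$ with all norms $S_i=\|\cdot\|_{\Cc^k}$, $k=\max\{8,l\}$, and checks (S1)--(S6). It obtains (EQ1) as \corref{cor}, which is just \thmref{stt} with $[\alpha,\beta]=[0,1]$ together with $\|f\|_{\mathcal{C}^8}\le\|f\|_{\Cc^k}$. It obtains (EQ2) as \corref{mixres}, which is \thmref{mixing} (decay of correlations along $U$) combined with $\|f\|_{L^2_l}\ll\|f\|_{\Cc^k}$.

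The auxiliary arguments you sketch around this citation are, however, not correct, and you should not lean on them.

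\emph{The weighted single estimate.} (EQ1) concerns only the unweighted orbit measure; the Wiener weight $\varphi$ is absorbed inside \cite[Theorem 2.1]{BMGA}. Your own attempt at a weighted single estimate breaks down. After reducing to characters, you correctly need a bound on $\int_0^1 e_n(x)f(a_Tu(x)[1_2,\bm{\xi}]\Gamma)\,dx$ that decays uniformly in $n$. But for $|n|>T^{\delta_1/4}$ you fall back on the trivial bound. That leaves an error $\|f\|_\infty\sum_{|n|>T^{\delta_1/4}}|\hat\varphi(n)|$, which is not $\ll T^{-\delta}\|\varphi\|_W$. For instance, take $\varphi=e_N$ with $N\asymp T^2$: neither the step-function approximation nor integration by parts gives any decay there.

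\emph{The inductive step.} Your freezing scheme does not close. Balancing $\eta T_{r-1}^2$ against $\eta^{-1}T_r^{-\delta_1}$ requires roughly $T_r\ge T_{r-1}^{2/\delta_1}$, and large $\mathfrak{D}_r$ does not force this (take $T_j=M^j$). Your proposed remedy, thickening and mixing along $a_{T_r/T_{r-1}}$, is not made precise. It is also not the decorrelation input actually used, which is mixing along the unipotent direction $U$.

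Since your last step is to cite \cite{BMGA}, the argument stands once you state the hypotheses being verified as \corref{cor} and \corref{mixres} and discard the sketches.
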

\begin{proof} As stated before, Theorem \ref{multiple} is an application of \cite[Theorem 2.1]{BMGA}. Note that our setup is exactly the same as the one demanded in the first paragraph of \cite[Section 2]{BMGA}. In order to apply \cite[Theorem 2.1]{BMGA} in our case, we have to check the hypotheses (EQ1) and (EQ2) of \cite[Section 2]{BMGA} and choose norms $S_i$ such that they satisfy the conditions (S1)-(S6) of \cite[Section 3.2]{BMGA}. 

For our purpose we set $\mathcal{A}:=C^{\infty}_b(X)$ and the Sobolev norms $S_i$ to be the $\|\cdot\|_{\Cc^k}$ norm for all $i \in \N$ (where $k$ is as defined in \ref{notations}). It is a routine verification to check properties (S1-S6) of the Sobolev norms $S_i:=\|\cdot\|_{\Cc^k}$. 

For verifying the hypotheses (EQ1) and (EQ2) of \cite[Section 2]{BMGA} in our case, first note that $\mathcal{A} \subset C^{8}_b(X)$ and $\mathcal{A} \subset C^{2, \infty}$. Moreover, for $f \in \mathcal{A}$ we have $\|f\|_{\mathcal{C}^8} \leq \|f\|_{\Cc^k}$ and $\|f\|_{L^2_l} \leq M \|f\|_{\Cc^k}$ for some constant $M>0$. Now, hypotheses (EQ1) and (EQ2) follow from \corref{cor} and \corref{mixres}, which in turn follow from \thmref{stt} and \thmref{mixing}, respectively:
\begin{cor} \label{cor}
For $\xi \in \T$ of Diophantine type, there exist $C>0$ and $\delta_1>0$ ($C$ and $\delta_1$ depending only on $\xi$) such that for all $f \in \mathcal{A}, \; T\geq 1$ we have 
    \begin{equation*}
    \Bigg|\int_{0}^{1}f\Big(a_Tu(x)\Big[1_2, \mt{\xi}{0}\Big]\Gamma\Big)\,dx-\int_{X}f\,d\mu \Bigg|\leq \frac{C}{T^{\delta_1}}\|f\|_{\mathcal{C}^k}.
    \end{equation*}
\end{cor}

\begin{cor}\label{mixres}
There exists $E, \tau >0$ such that for all $f_1, f_2 \in \mathcal{A}$ and $x \in \textnormal{Lie}(U)$ we have \begin{equation}
    \Bigg|\int_{X} f_1(\exp{(x)}y)f_2(y)\,d\mu(y)-\int_{X}f_1 \, d\mu\int_{X}f_2 d\mu \Bigg|\leq E\,\textnormal{max}\{{1, \|x\|}\}^{-\tau} \|f_1\|_{\Cc^k}\|f_2\|_{\Cc^k}.
\end{equation}
\end{cor}
%Corollary \ref{cor} verifies hypothesis (EQ1) of Theorem 2.1 \cite{BMGA} for the algebra $\mathcal{A}$. Similarly, Corollary \ref{mixres} verifies the hypothesis (EQ2) of Theorem 2.1 \cite{BMGA} for the same class $\mathcal{A}$. It is a routine verification to check properties (S1-S6) of the Sobolev norms $S_i:=\|\cdot\|_{\Cc^k}$ (similar to the sketch given in Example (i) of \cite[Section 3.2]{BMGA}). Therefore applying \cite[Theorem 2.1]{BMGA} we have the theorem.

\end{proof}

\section{Poisson law for subsets of $G/\Gamma$}\label{sec:poi}

\subsection{Poisson approximation}
We first recall a criterion (stated and proved in \cite[Section 4.1]{bjorklund2023poisson}) to determine when a sequence of random variables converge in distribution to Poisson distribution. We first setup some notations to state the lemma.

Let $(Z, \nu)$ be a probability space and let $H$ be an infinite set. Let $(F_n)$ be a sequence of finite subsets of $H$ such that $|F_n|\to \infty$. Suppose that for every $n$ and $h \in F_n$ we are given a measurable subset $A_{n, h} \subset Z$. Define \begin{equation}
    N_n(z):=\#\{h\in F_n: \; z \in A_{n,h}\}, \; \text{for} \; z \in Z.
\end{equation}

For $m\geq 0$, and $n, r \in \mathbb{N}$ we define $\Lambda_{n, r}(m)$ to be as follows
\begin{equation}
    \Lambda_{n,r}(m):=\textnormal{max}\bigg\{\big||F_n|^r\nu\big(\bigcap_{h \in F^{'}}A_{n, h}\big)-m^r\big|: F^{'} \subset F_n, \; |F^{'}|=r\bigg\}.
\end{equation}
If $|F_n|<r$, we set $\Lambda_{n, r}(m)=0$.

The following lemma is a straight forward application of the method of moments:
\begin{lem}\label{Poisson} 
If there exists a $m\geq 0$ such that for all $r\geq 1$ we have 

\begin{equation}
    \lim_{n \to \infty} \Lambda_{n, r}(m)=0.
\end{equation}
Then $$N_n \xLongrightarrow[\nu]{} \textnormal{Poi}(m) \quad \text{as} \; \; n\to \infty.$$
\end{lem}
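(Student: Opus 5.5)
We prove \lemref{Poisson} by the method of moments, working with factorial moments of $N_n$. Writing $N_n=\sum_{h\in F_n}\chi_{A_{n,h}}$, for each $r\geq 1$ the $r$-th factorial moment expands as
\begin{equation*}
\int_Z N_n(N_n-1)\cdots(N_n-r+1)\,d\nu=\sum_{\substack{(h_1,\ldots,h_r)\in F_n^r\\ \text{distinct}}}\nu\Big(\bigcap_{i=1}^r A_{n,h_i}\Big)=r!\sum_{\substack{F'\subset F_n\\|F'|=r}}\nu\Big(\bigcap_{h\in F'}A_{n,h}\Big).
\end{equation*}
The hypothesis says that each summand equals $|F_n|^{-r}(m^r+O(\Lambda_{n,r}(m)))$, uniformly over all $F'$. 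The number of subsets is $\binom{|F_n|}{r}$, so the factorial moment equals
\begin{equation*}
r!\binom{|F_n|}{r}|F_n|^{-r}\big(m^r+O(\Lambda_{n,r}(m))\big).
\end{equation*}
Since $|F_n|\to\infty$, we have $r!\binom{|F_n|}{r}|F_n|^{-r}\to 1$. Together with $\Lambda_{n,r}(m)\to 0$, this gives $\mathbb{E}_\nu[(N_n)_r]\to m^r$ for every $r\geq 1$, where $(N_n)_r$ denotes the falling factorial. The case $|F_n|<r$ occurs for only finitely many $n$, so it is irrelevant.

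Next we pass from factorial moments to ordinary moments. We have $N_n^r=\sum_{j=0}^r S(r,j)(N_n)_j$, where $S(r,j)$ are Stirling numbers of the second kind. Hence $\mathbb{E}_\nu[N_n^r]\to\sum_{j}S(r,j)m^j$, which is exactly the $r$-th moment of $\mathrm{Poi}(m)$, since the factorial moments of $\mathrm{Poi}(m)$ are $m^r$. When $m=0$, the case $r=1$ alone gives $\nu(N_n\geq1)\leq\mathbb{E}[N_n]\to 0$, and we are done directly.

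Finally, the Poisson distribution is determined by its moments, because its moment generating function $e^{m(e^s-1)}$ is finite for all $s$. Equivalently, the moments grow slowly enough to satisfy Carleman's condition. The standard method-of-moments theorem then yields $N_n\Rightarrow\mathrm{Poi}(m)$: convergence of all moments implies tightness, and every subsequential limit has the Poisson moments, so it must be $\mathrm{Poi}(m)$.

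The only point needing care is the uniformity in $F'$. This is exactly why $\Lambda_{n,r}(m)$ is defined as a maximum over $F'$: it lets us control the sum over all $\binom{|F_n|}{r}$ subsets at once. No other step presents a genuine obstacle.
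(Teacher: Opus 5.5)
Your proposal is correct and takes essentially the same approach as the paper. The paper's proof simply defers to \cite[Lemma 4.1]{bjorklund2023poisson}, a method-of-moments argument: the moments of $N_n$ are expanded over tuples in $F_n$ grouped by their number of distinct elements (your factorial moments plus Stirling numbers), shown to converge to the Poisson moments using the uniformity built into $\Lambda_{n,r}(m)$, and the conclusion follows because $\mathrm{Poi}(m)$ is determined by its moments.
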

\begin{proof}
    See \cite[Lemma 4.1]{bjorklund2023poisson} for proof.
\end{proof}

\subsection{Multiple effective equidistribution implies Poisson law}
In this subsection, we prove a theorem analogous to \cite[Theorem 4.4]{bjorklund2023poisson} in our context. One key difference is that in our case we allow for $A_n$ to be a broader class of subsets than that allowed in \cite{bjorklund2023poisson} (i.e., our subsets $A_n$ need not be such that $A_n$ or $A_n^c$ is compact). We will need this level of generality for our application. 

For our purpose, we take 
$$(Z,\nu):=([0, 1], \textnormal{Leb}) \quad \text{and} \quad H:=A^{+}.$$
Given a finite subset $F :=\{a_{T_1}, \ldots, a_{T_s}\} \subset A^{+}$ of  cardinality $s$, we define:
\begin{equation}\label{w defn}
w(F):=\log(\mathfrak{D}_s(a_{T_1}, \ldots, a_{T_s})),\end{equation}
where $\Delta_s$ as in \eqref{equ:delta_r}.
It is easy to see that for any subset $F^{'}\subset F$ we have $w(F)\leq w(F^{'})$.
\begin{thm}\label{Poi law}
Fix $\xi \in \mathbb{T}$ of Diophantine type. Let $F_n$ be a sequence of finite subsets of $A^{+}$ such that 
\begin{equation}\label{equ:hyp1}
    |F_n| \to \infty \quad \text{and} \quad \frac{w(F_n)}{\log(|F_n|)}\to \infty.
\end{equation}
  Suppose $\{A_n\}_n \subset  G/\Gamma$ be a sequence of sets that are $K$-smooth for all sufficiently large $n$, where $K$ is uniform. If there exists $m>0$ such that \begin{equation}\label{equ:hyp2}
      \lim_{n \to \infty} \mu(A_n)|F_n|=m,
  \end{equation}  then we have 
   $$N_n \xLongrightarrow[\textnormal{Leb}]{}\textnormal{Poi}(m),$$ i.e., $$\lim_{n\to \infty} \textnormal{Leb}\bigl(\bigl\{\alpha \in \mathbb{T}: \#\{h \in F_n: hu(\alpha)[1_2, \bm{\xi}]\Gamma \in A_n\}=k\bigr\}\bigr)=\frac{m^ke^{-m}}{k!}.$$
\end{thm}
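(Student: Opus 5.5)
We apply \lemref{Poisson} with $(Z,\nu)=([0,1],\textnormal{Leb})$, index sets $F_n\subset A^{+}$, and $A_{n,h}:=\{\alpha\in[0,1]: hu(\alpha)[1_2,\bm{\xi}]\Gamma\in A_n\}$. Then $N_n(\alpha)=\#\{h\in F_n: hu(\alpha)[1_2,\bm{\xi}]\Gamma\in A_n\}$. By \lemref{Poisson} it suffices to show, for each fixed $r\geq 1$, that
\[
\Lambda_{n,r}(m)=\max_{F'\subset F_n,\,|F'|=r}\Big||F_n|^r\,\textnormal{Leb}\Big(\bigcap_{h\in F'}A_{n,h}\Big)-m^r\Big|\longrightarrow 0 .
\]
Fix $F'=\{a_{T_1},\dots,a_{T_r}\}\subset F_n$. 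Then
\[
\textnormal{Leb}\Big(\bigcap_{h\in F'}A_{n,h}\Big)=\int_0^1\prod_{j=1}^r\chi_{A_n}\big(a_{T_j}u(x)[1_2,\bm{\xi}]\Gamma\big)\,dx .
\]
Since $A_n$ is $K$-smooth with $K$ uniform, \lemref{lemma:2} gives, for a parameter $\delta_n\in(0,\tfrac12]$ to be chosen, smooth functions $f^{\pm}_{n}$ with $f^-_n\le\chi_{A_n}\le f^+_n$, $\|f^{\pm}_n-\chi_{A_n}\|_{L^1}\le 2K\delta_n$ and $\|f^{\pm}_n\|_{\Cc^k}\le C_k\delta_n^{-k}$. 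As all functions are nonnegative, the product of the $\chi_{A_n}$ is sandwiched between the products of the $f^-_n$ and of the $f^+_n$.

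We apply \thmref{multiple} with $\varphi\equiv 1$ (so $\|\varphi\|_W=1$) to each sandwich. This gives
\[
\Big|\int_0^1\prod_{j}f^{\pm}_n(a_{T_j}u(x)[1_2,\bm{\xi}]\Gamma)\,dx-\Big(\int_X f^{\pm}_n\,d\mu\Big)^r\Big|\le D_r\,\mathfrak{D}_r(F')^{-\delta_r}C_k^r\delta_n^{-kr}.
\]
Since $w(F')\ge w(F_n)$, we have $\mathfrak{D}_r(F')\ge e^{w(F_n)}$. Also $\big|\int f^{\pm}_n\,d\mu-\mu(A_n)\big|\le 2K\delta_n$. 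Multiplying by $|F_n|^r$, it suffices to have:
\begin{itemize}
\item[(a)] $|F_n|\delta_n\to0$, so that $|F_n|\int f_n^{\pm}\,d\mu\to m$ and hence $|F_n|^r(\int f_n^\pm\,d\mu)^r\to m^r$;
\item[(b)] $|F_n|^r\delta_n^{-kr}e^{-\delta_r w(F_n)}\to 0$.
\end{itemize}
Both are achieved by taking $\delta_n:=|F_n|^{-2}$. Then (a) is immediate, and (b) becomes $\exp\big((2k+1)r\log|F_n|-\delta_r w(F_n)\big)\to0$, which holds because $w(F_n)/\log|F_n|\to\infty$ by \eqref{equ:hyp1}. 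Note that $\delta_n$ is independent of $r$, which is harmless since $r$ is fixed in each limit. The bound is uniform over all $F'$ of size $r$, so $\Lambda_{n,r}(m)\to0$ for every $r$, and \lemref{Poisson} yields the claim.

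The main obstacle is only bookkeeping: we must check that the $\Cc^k$-norm blow-up $\delta_n^{-kr}$ from the smoothing is beaten by the decay $e^{-\delta_r w(F_n)}$ while the $L^1$ approximation error stays $o(|F_n|^{-1})$. This is exactly what the super-logarithmic growth of $w(F_n)$ affords. No compactness of $A_n$ or $A_n^c$ is needed, since \lemref{lemma:2} produces globally bounded smooth approximants in $C^\infty_b(X)$ directly from $K$-smoothness. Those are precisely the functions admissible in \thmref{multiple}.
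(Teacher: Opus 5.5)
Your proposal is correct and follows essentially the same route as the paper: sandwich $\chi_{A_n}$ via \lemref{lemma:2} with $\delta_n=|F_n|^{-2}$, apply \thmref{multiple} with $\varphi\equiv 1$, use the monotonicity $w(F')\ge w(F_n)$, and conclude with \lemref{Poisson}. Your bookkeeping of the error term $|F_n|^{(2k+1)r}e^{-\delta_r w(F_n)}$ matches the paper's exactly.
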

\begin{proof}
Since the sets $A_n$ are $K$-smooth, set the $\delta_n$ in \lemref{lemma:2} for each $A_n$ to be $1/|F_n|^2$. So for large enough $n$ there exists smooth functions $\varphi_n^{+}$ and $\varphi_n^{-}$ such that 
\begin{equation}\label{equ:appro_A_n}
    0\leq\varphi_{n}^{-}\leq\chi_{A_n}\leq \varphi_{n}^{+}\leq 1,\,\, \|\chi_{A_n}-\varphi_n^{\pm}\|_{L^1}\leq \frac{2K}{|F_n|^{2}}, \,\, \textnormal{and} \,\, \|\varphi_n^{\pm}\|_{\Cc^k} \leq C_k|F_n|^{2k}.
\end{equation}

For any $F^{'} \subset F_n$ such that $|F^{'}|=r$ we have \begin{multline*}
\int_{0}\prod_{h\in F^{'}}\varphi_n^{-}(hu(\alpha)[1, \bm{\xi}]\Gamma)\,dx
\leq \textnormal{Leb}\big(\big\{\alpha \in [0, 1]: \; u(\alpha)[1, \bm{\xi}]\Gamma \in \cap_{h \in F^{'}}h^{-1}A_n \big\}\big) \\
\leq \int_{X}\prod_{h\in F^{'}}\varphi_n^{+}(hu(\alpha)[1, \bm{\xi}]\Gamma)dx.
\end{multline*}
Applying \thmref{multiple} we have
\begin{multline*}
\biggl(\int_{X}\varphi_n^{-}\biggr)^r - D_r(C_k)^{r}|F_n|^{2kr}e^{-\delta_rw(F^{'})}\\
\leq \textnormal{Leb}\big(\big\{\alpha \in [0, 1]: \; u(\alpha)[1, \bm{\xi}]\Gamma \in \cap_{h \in F^{'}}h^{-1}A_n \big\}\big) \\
\leq \biggl(\int_{X}\varphi_n^{+}\biggr)^r + D_r(C_k)^{r}|F_n|^{2kr}e^{-\delta_rw(F^{'})}.\end{multline*}
Multiplying by $|F_n|^r$, subtracting $m^r$, and applying \eqref{equ:appro_A_n}  we get 
\begin{multline*}
\biggl(|F_n|\mu(A_n)-\frac{2K}{|F_n|}\bigg)^r - m^r-D_r(C_k)^{r}|F_n|^{2kr+r}e^{-\delta_rw(F^{'})}\\
\leq |F_n|^r\textnormal{Leb}\big(\big\{\alpha \in [0, 1]: \; u(\alpha)[1, \bm{\xi}]\Gamma \in \cap_{h \in F^{'}}h^{-1}A_n \big\}\big) -m^r\\
\leq \biggl(|F_n|\mu(A_n)+\frac{2K}{|F_n|}\bigg)^r-m^r + D_r(C_k)^{r}|F_n|^{2kr+r}e^{-\delta_rw(F^{'})}
\end{multline*}
Noting that $e^{-\delta_rw(F^{'})}\leq e^{-\delta_rw(F_n)}$ for all subsets $F^{'}\subset F$ of size $r$, we have 
\begin{multline*}
\max\bigg\{\bigg||F_n|^r\textnormal{Leb}\big(\big\{\alpha \in [0, 1]: \; u(\alpha)[1, \bm{\xi}]\Gamma \in \cap_{h \in F^{'}}h^{-1}A_n \big\}\big) -m^r\bigg|:F^{'}\subset F_n, |F^{'}|=r \bigg\}\\
\leq \max\bigg\{\bigg|\biggl(|F_n|\mu(A_n)+\frac{2K}{|F_n|}\bigg)^r-m^r + D_r(C_k)^{r}|F_n|^{2kr+r}e^{-\delta_rw(F_n)}\bigg|, \\ 
\bigg|\biggl(|F_n|\mu(A_n)-\frac{2K}{|F_n|}\bigg)^r - m^r-D_r(C_k)^{r}|F_n|^{2kr+r}e^{-\delta_rw(F_n)}\bigg|\bigg\}.
\end{multline*}
Now for a fixed $r$,  by hypothesis \eqref{equ:hyp1} and \eqref{equ:hyp2} we have that both $$\bigg|\biggl(|F_n|\mu(A_n)+\frac{2K}{|F_n|}\bigg)^r-m^r + D_r(C_k)^{r}|F_n|^{2kr+r}e^{-\delta_rw(F_n)}\bigg| \to 0$$ and
$$\bigg|\biggl(|F_n|\mu(A_n)-\frac{2K}{|F_n|}\bigg)^r - m^r-D_r(C_k)^{r}|F_n|^{2kr+r}e^{-\delta_rw(F_n)}\bigg| \to 0$$ as $n \to \infty$. 

Thus, for each $r$ we have  
\begin{equation*}
\lim_{n\to \infty}\max\bigg\{\bigg||F_n|^r\textnormal{Leb}\big(\big\{\alpha \in [0, 1]: \; u(\alpha)[1, \bm{\xi}]\Gamma \in \cap_{h \in F^{'}}h^{-1}A_n \big\}\big) -m^r\bigg|:F^{'}\subset F_n, |F^{'}|=r \bigg\}=0.
\end{equation*}
Thus, by \lemref{Poisson} we have $N_n \xLongrightarrow[\textnormal{Leb}]{}\textnormal{Poi}(m)$.
\end{proof}

Note that we have used the assumption that eventually all $A_n$ are $K$-smooth subsets of $G/\Gamma$ in the above proof. By following the above proof it should be clear that one can weaken the hypothesis on $A_n$ a little by removing that uniformity. Namely, one can prove the following stronger variant (which will be of use to us):

\begin{thm}\label{Poi law refined}
Let $\xi$ and $F_n$ be as in \thmref{Poi law}. Suppose $\{A_n\}_n \subset  G/\Gamma$ be a sequence of sets that are $K|F_n|^s$-smooth for all sufficiently large $n$ (i.e., for a fixed $K$ and $s$, the sets $A_n$ are eventually $K|F_n|^s$-smooth for all sufficiently large $n$). If there exists $m>0$ such that \begin{equation}\label{equ:hyp2spec}
      \lim_{n \to \infty} \mu(A_n)|F_n|=m,
  \end{equation}  then we have 
   $$N_n \xLongrightarrow[\textnormal{Leb}]{}\textnormal{Poi}(m),$$ i.e., $$\lim_{n\to \infty} \textnormal{Leb}\bigl(\bigl\{\alpha \in \mathbb{T}: \#\{h \in F_n: hu(\alpha)[1_2, \bm{\xi}]\Gamma \in A_n\}=k\bigr\}\bigr)=\frac{m^ke^{-m}}{k!}.$$
\end{thm}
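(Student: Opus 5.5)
The plan is to run the proof of \thmref{Poi law} essentially verbatim, changing only the smoothing scale and the bookkeeping of the Sobolev norms. The goal is again to verify the hypothesis of \lemref{Poisson} with $H=A^{+}$, $(Z,\nu)=([0,1],\textnormal{Leb})$ and $A_{n,h}=\{\alpha: hu(\alpha)[1_2,\bm{\xi}]\Gamma\in A_n\}$. In other words, for each fixed $r$ we must show $\Lambda_{n,r}(m)\to 0$.

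First I will apply \lemref{lemma:2} to $A_n$, which is $K|F_n|^s$-smooth, with $\delta_n:=|F_n|^{-(s+2)}$; this is $\leq \frac12$ for $n$ large. This produces $\varphi_n^{\pm}\in C^\infty(X)$ with $0\leq\varphi_n^-\leq\chi_{A_n}\leq\varphi_n^+\leq1$. They satisfy
\[
\|\varphi_n^\pm-\chi_{A_n}\|_{L^1}\leq 2K|F_n|^{s}\delta_n=\frac{2K}{|F_n|^{2}}
\quad\text{and}\quad
\|\varphi_n^\pm\|_{\Cc^k}\leq C_k\delta_n^{-k}=C_k|F_n|^{k(s+2)}.
\]
The key observation is that the constant $C_k$ in \lemref{lemma:2}(3) does not depend on the set. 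So the only effect of the non-uniform smoothness is that the norm bound becomes a larger, but still fixed, power of $|F_n|$.

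Next, for $F'\subset F_n$ with $|F'|=r$, I sandwich $\textnormal{Leb}(\bigcap_{h\in F'}A_{n,h})$ between $\int_0^1\prod_{h\in F'}\varphi_n^{\mp}(hu(\alpha)[1_2,\bm{\xi}]\Gamma)\,d\alpha$. Applying \thmref{multiple} with $\varphi\equiv1$ (so $\|\varphi\|_W=1$) gives
\[
\Bigl|\textnormal{Leb}\Bigl(\bigcap_{h\in F'}A_{n,h}\Bigr)-\Bigl(\int_X\varphi_n^{\pm}\,d\mu\Bigr)^r\Bigr|\leq D_rC_k^r|F_n|^{kr(s+2)}e^{-\delta_r w(F')},
\]
and I use $w(F')\geq w(F_n)$. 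Multiplying by $|F_n|^r$, and using $\int\varphi_n^\pm=\mu(A_n)\pm O(|F_n|^{-2})$, I get
\[
\Lambda_{n,r}(m)\leq \max_\pm\Bigl|\bigl(|F_n|\mu(A_n)\pm 2K|F_n|^{-1}\bigr)^r-m^r\Bigr|+D_rC_k^r|F_n|^{kr(s+2)+r}e^{-\delta_rw(F_n)}.
\]

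The first term tends to $0$ by \eqref{equ:hyp2spec}. For the second term, write it as $\exp\bigl(\log|F_n|\,\bigl(kr(s+2)+r-\delta_r\,w(F_n)/\log|F_n|\bigr)\bigr)$. Since $w(F_n)/\log|F_n|\to\infty$ by \eqref{equ:hyp1}, and $r,k,s$ are fixed, the bracket tends to $-\infty$, so this term tends to $0$. The one step needing care is exactly this balance: polynomial loss in $|F_n|$ from the smoothing against the superpolynomial decay $e^{-\delta_r w(F_n)}$. The lacunarity hypothesis absorbs any fixed power, which is why a fixed $s$ is harmless. \lemref{Poisson} then yields $N_n\Rightarrow\textnormal{Poi}(m)$.
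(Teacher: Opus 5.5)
Your proposal is correct and follows the paper's proof: the paper takes $\delta_n = |F_n|^{-(s+2)}$ in the proof of \thmref{Poi law} and reruns that argument unchanged. You have written out the bookkeeping the paper leaves implicit: the $L^1$ error becomes $2K|F_n|^{-2}$, the $\mathcal{C}^k$-norm cost becomes $C_k|F_n|^{k(s+2)}$, and this fixed polynomial loss is absorbed by $e^{-\delta_r w(F_n)}$ because $w(F_n)/\log|F_n|\to\infty$.
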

\begin{proof}
    Choose the $\delta_n$ in the proof of \thmref{Poi law} to be $\delta_n:=1/|F_n|^{s+2}$ and run the exact same argument as in \thmref{Poi law}.
\end{proof}
\section{Proof of \thmref{thm:wei}}\label{sec:wei}

In this section, we prove the Weibull limit laws for minima and the Fr\'echet limit law for maxima in the case when $\xi$ is of Diophantine type. The proof technique is similar to the method adopted in \cite{bjorklund2023poisson}, with the key input being \thmref{Poi law}, \thmref{Poi law refined} combined with the volume asymptotics of section \ref{inhomovol}.

\begin{proof}[Proof of \thmref{thm:wei}]
\underline{\textbf{Case 1:} $r=1$ minima}\\
Given the sequence $\Delta_n$ set $F_n \subset A^{+}$ to be the sequence of subsets of $A^{+}$, where $F_n=\{a_T:T\in \Delta_n\}$. Given such a $\Delta_n$ and a $t\geq0$, set $A_n:=A^{(1)}_{t/|\Delta_n|}$ for $n \in \N$. Then using \lemref{r=1 vol} one has $$2t\bigg(1-\frac{2t}{|\Delta_n|}\bigg)\leq|\Delta_n|\mu(A_n)\leq 2t$$ for all $n \in \N$ large enough (i.e. when $\frac{t}{|\Delta_n|} \leq \frac{1}{2}$). Letting $n \to \infty$ we have $\lim_{n \to \infty}\mu(A_n)|\Delta_n|=2t$. Using \lemref{smooth} we conclude that the sets $A_n$ are eventually $K$-smooth. The conditions on the sets $\Delta_n$ are such that the sets $F_n$ satisfy the hypothesis in \thmref{Poi law}. So, by applying \thmref{Poi law}, we get that for a fixed $\xi$ of Diophantine type one has $$\lim_{n \to \infty} \textnormal{Leb}\bigg(\bigg\{\alpha \in \T: \#\{T \in \Delta_n: a_Tu(\alpha)[1_2, -\bm{\xi}]\Gamma \in A_n\}=k \bigg\}\bigg)=\frac{(2t)^ke^{-2t}}{k!}.$$ By summing over all $k\geq 1$ one has 
$$\lim_{n \to \infty} \textnormal{Leb}\bigg(\bigg\{\alpha \in \T: \#\{T \in \Delta_n: a_Tu(\alpha)[1_2, -\bm{\xi}] \Gamma \in A_n\}>0 \bigg\}\bigg)=1-e^{-2t}.$$
Note that for large enough $n$, the set $$\bigg\{\alpha \in \T: \#\{T \in \Delta_n: a_Tu(\alpha)[1, -\bm{\xi}] \Gamma \in A_n\}>0 \bigg\}$$ is same as the set $$\bigg\{\alpha \in \T: \mathfrak{m}_\xi^{(1)}(\Delta_n; \alpha)\leq\frac{t}{|\Delta_n|} \bigg\}.$$
Therefore one has for all $t\geq0$ $$\lim_{n \to \infty} \textnormal{Leb}\bigg(\bigg\{\alpha \in \T: \mathfrak{m}_\xi^{(1)}(\Delta_n; \alpha)\leq\frac{t}{|\Delta_n|} \bigg\}\bigg)=1-e^{-2t}.$$

Thus $$|\Delta_n|\mathfrak{m}^{(1)}_\xi(\Delta_n; \cdot)\xLongrightarrow[\textnormal{Leb}]{} \textnormal{Wei}(2, 1).$$\\
\underline{\textbf{Case 2:} $r\geq 2$ minima}\\
Given such a $\Delta_n$, set $F_n \subset A^{+}$ to be exactly as in the previous case. For such a $\Delta_n$ and a $t\geq 0$, set $A_n:=A^{(r)}_{\frac{t}{\sqrt{|\Delta_n|}}}$ for $n \in \N$.  Then from \lemref{r>=2 vol} one has for large enough $n$ $$|\Delta_n|\mu(A_n)=\frac{12}{\pi^2}\bigg(\frac{1}{(r-1)^2}-\frac{1}{r^2}\bigg)t^2.$$ So one has $$\lim_{n \to \infty} |\Delta_n|\mu(A_n)=\frac{12}{\pi^2}\bigg(\frac{1}{(r-1)^2}-\frac{1}{r^2}\bigg)t^2.$$
Denote the constant $\frac{12}{\pi^2}\big(\frac{1}{(r-1)^2}-\frac{1}{r^2}\big)$ by $c_r$. Again from \lemref{smooth} one has that eventually each $A_n$ is $K$-smooth. The conditions on the sets $\Delta_n$ are set so that the sets $F_n$ satisfy the hypothesis in \thmref{Poi law}. So, by applying \thmref{Poi law}, we get that for a fixed $\xi$ of Diophantine type one has $$\lim_{n \to \infty} \textnormal{Leb}\bigg(\bigg\{\alpha \in \T: \#\{T \in \Delta_n: a_Tu(\alpha)[1_2, -\bm{\xi}]\Gamma \in A_n\}=k \bigg\}\bigg)=\frac{(c_rt^2)^ke^{-c_rt^2}}{k!}.$$
By summing over all $k\geq 1$ one has 
$$\lim_{n \to \infty} \textnormal{Leb}\bigg(\bigg\{\alpha \in \T: \#\{T \in \Delta_n: a_Tu(\alpha)[1_2, -\bm{\xi}] \Gamma \in A_n\}>0 \bigg\}\bigg)=1-e^{-c_rt^2}.$$
Note that for large enough $n$, the set $$\bigg\{\alpha \in \T: \#\{T \in \Delta_n: a_Tu(\alpha)[1_2, -\bm{\xi}]\Gamma \in A_n\}>0 \bigg\}$$ is same as the set $$\bigg\{\alpha \in \T: \mathfrak{m}_\xi^{(r)}(\Delta_n; \alpha)\leq\frac{t}{\sqrt{|\Delta_n|}} \bigg\}.$$

Therefore one has for all $t\geq0$ $$\lim_{n \to \infty} \textnormal{Leb}\bigg(\bigg\{\alpha \in \T: \mathfrak{m}_\xi^{(r)}(\Delta_n; \alpha)\leq\frac{t}{\sqrt{|\Delta_n|}} \bigg\}\bigg)=1-e^{-c_rt^2}.$$

Thus $$|\Delta_n|^{1/2}\mathfrak{m}^{(r)}_\xi(\Delta_n; \cdot)\xLongrightarrow[\textnormal{Leb}]{} \textnormal{Wei}\left(\frac{12}{\pi^2}\left(\frac{1}{(r-1)^2}-\frac{1}{r^2}\right), 2\right).$$

\noindent\underline{\textbf{Case 3:} $r=1$ maxima}\\
Given such a $\Delta_n$, set $F_n \subset A^{+}$ to be exactly as in Case 1. For such a $\Delta_n$ and a $t> 0$ we set $A_n:=C^{(0)}_{t|\Delta_n|}$. Since $C^{(0)}_t=X-A^{(1)}_t$, one has that $A_n$ are $K't^2|\Delta_n|^2$ smooth for all large enough $n$ by \lemref{smooth} and Remark \ref{triv}. Moreover, by \lemref{asymp} one has $$\lim_{n\to \infty}\mu(A_n)|\Delta_n|=\frac{1}{\pi^2t}.$$

The conditions on $\Delta_n$, and $A_n$ satisfy the hypothesis of \thmref{Poi law refined} (with $s=2$). So one has $$\lim_{n \to \infty} \textnormal{Leb}\bigg(\bigg\{\alpha \in \T: \#\{T \in \Delta_n: a_Tu(\alpha)[1, -\bm{\xi}]\Gamma \in A_n\}=k \bigg\}\bigg)=\frac{1}{k!(\pi^2t)^k}e^{-\frac{1}{\pi^2t}}.$$
Summing over $k\geq1$, we have 
$$\lim_{n \to \infty} \textnormal{Leb}\bigg(\bigg\{\alpha \in \T: \#\{T \in \Delta_n: a_Tu(\alpha)[1, -\bm{\xi}]\Gamma \in A_n\} >0\bigg\}\bigg)=1-e^{-\frac{1}{\pi^2t}}.$$ However, note that for large enough $n$ the set $$\bigg\{\alpha \in \T: \#\{T \in \Delta_n: a_Tu(\alpha)[1, -\bm{\xi}]\Gamma \in A_n\}>0 \bigg\}$$ is same as the set $$\bigg\{\alpha \in \T: \mathfrak{M}_\xi^{(1)}(\Delta_n; \alpha)> t|\Delta_n| \bigg\}.$$

So one has for all $t>0$ $$\lim_{n \to \infty} \textnormal{Leb}\bigg(\bigg\{\alpha \in \T: |\Delta_n|^{-1}\mathfrak{M}_\xi^{(1)}(\Delta_n; \alpha)> t \bigg\}\bigg)=1-e^{-\frac{1}{\pi^2t}}.$$

Thus $$|\Delta_n|^{-1}\mathfrak{M}^{(1)}_\xi(\Delta_n; \cdot)\xLongrightarrow[\textnormal{Leb}]{} \textnormal{Fre}\left(\frac{1}{\pi^2}, 1\right).$$
\end{proof}
\section{Proof of \thmref{homogeneous}}\label{sec:homo}
Analogous to the inhomogeneous case, one can prove the homogeneous versions of the above theorems once we know the volume asymptotics and effective multiple equidistribution. In this section, we give a sketch of the arguments needed to deduce the theorems of the homogeneous case. 

On the space $G'/\Gamma'$ we have the analogous effective multiple equidistribution statement.

\begin{thm}[Effective multiple equidistribution] There exists a $k \in \N$ such that for every $r\geq 1$, there exists $D'_r,\delta'_r>0$ such that for all $f_1, f_2, \ldots, f_r \in C^{\infty}_b(G'/\Gamma')$ and $\varphi \in W(\textnormal{Leb})$, we have for every $(a_{T_1}, a_{T_2}\ldots, a_{T_r}) \in (A^{+})^r$ 
    \begin{multline}
        \Bigg|\int_{0}^1 \varphi(x) \prod_{j=1}^rf_j(a_{T_j}u(x)\Gamma') \,dx -\int_{0}^1 \varphi \,dx \prod_{j=1}^r\int_{G'/\Gamma'}f_j \,d\mu' \Bigg|  \\
        \leq D'_r \mathfrak{D}_r(a_{T_1}, a_{T_2}, \ldots, a_{T_r})^{-\delta'_r} \|\varphi\|_{W}\prod_{j=1}^r\|f_j\|_{\Cc^k}  .
\end{multline}
\end{thm}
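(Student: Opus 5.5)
We propose to prove this exactly as \thmref{multiple} was proved: apply \cite[Theorem 2.1]{BMGA}, now with ambient space $G'/\Gamma'$, the one-parameter group $A$ acting on the periodic horocycle $\{u(x)\Gamma' : x\in[0,1)\}$ carrying Lebesgue measure, and $\mathcal{A}':=C^\infty_b(G'/\Gamma')$. We take all Sobolev norms $S_i$ equal to $\|\cdot\|_{\Cc^k}$, for a suitable $k$ fixed below. Checking (S1)--(S6) for $\Cc^k$ norms on $G'/\Gamma'$ is the same routine verification as in the affine case. The real content is therefore the two dynamical hypotheses (EQ1) and (EQ2) of \cite[Section 2]{BMGA}.

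For (EQ1) we need an effective equidistribution estimate
\[
\Bigl|\int_0^1 f(a_Tu(x)\Gamma')\,dx-\int_{G'/\Gamma'}f\,d\mu'\Bigr|\leq C T^{-\delta}\|f\|_{\Cc^k}.
\]
This is classical for closed horocycles on the modular surface, and we would quote it; Sarnak and Str\"ombergsson give the rate $T^{-1/2}$ up to logarithms, with a Sobolev norm. We would take it in the version for functions on the unit tangent bundle $G'/\Gamma'$, not only on $\mathbb{H}/\Gamma'$.

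An alternative route avoids quoting it. One could use the pullback $f\circ\pi$ along the projection $\pi:G/\Gamma\to G'/\Gamma'$, which satisfies $\pi(a_Tu(x)[1_2,\bm{\xi}]\Gamma)=a_Tu(x)\Gamma'$ and $\pi_*\mu=\mu'$, and then apply \corref{cor}. This needs some fixed $\xi$ of Diophantine type, such as the golden ratio. Since $\|f\circ\pi\|_{\Cc^k}\le\|f\|_{\Cc^k}$ (derivatives in the $\R^2$ directions vanish), this gives (EQ1) with no new input. We would present this reduction as the primary argument because it is self-contained within the paper.

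For (EQ2) we need decay of the matrix coefficients $\int f_1(u(x)y)f_2(y)\,d\mu'(y)$ in $|x|$ with $\Cc^k$ norms. This follows from the spectral gap of $\mathrm{SL}_2(\R)$ on $L^2_0(G'/\Gamma')$: Selberg's $3/16$ bound, or simply the fact that $\Gamma'$ is a congruence lattice. Together with the standard Howe--Moore-type quantitative bounds for $K$-finite, or Sobolev, vectors, this gives the analogue of \thmref{mixing} with some $l$ and $\tau>0$. We then set $k:=\max\{8,l\}$ and bound $L^2_l$ norms by $\Cc^k$ norms, since $\mu'$ is a probability measure, exactly as before.

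With (EQ1) and (EQ2) in hand, \cite[Theorem 2.1]{BMGA} gives the stated estimate with $\varphi\in W(\mathrm{Leb})$ and the rate $\mathfrak{D}_r^{-\delta'_r}$. Here $\mathfrak{D}_r$ agrees with the $\Delta_r$ of that paper up to a power, as noted earlier.

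The main obstacle we expect is checking that the pullback trick, or the quoted horocycle result, really gives a uniform bound in the exact form (EQ1) requires, with the norm on the right-hand side compatible with (S1)--(S6). The mixing input and the application of \cite{BMGA} are otherwise formal.
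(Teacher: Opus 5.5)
Your proposal is correct. It has the same skeleton as the paper, which likewise supplies (EQ1) and (EQ2) and then invokes \cite[Theorem 2.1]{BMGA}; the difference is where (EQ1) comes from. The paper takes your route (a). It quotes the homogeneous effective equidistribution of closed horocycles on $G'/\Gamma'$ from \cite[Section 5, eq (23)]{strom}, in the form of \cite[Remark 3.4]{Andreasdeviationofergodic}. It then bounds that error term by a power of $T$ times a norm dominated by $\|\cdot\|_{\Cc^k}$. Your primary route (b) is also sound, and it resolves the obstacle you anticipated. The map $\pi$ is induced by the homomorphism $[A,b]\mapsto A$, so $\mathcal{D}_Y(f\circ\pi)=(\mathcal{D}_{\bar Y}f)\circ\pi$, where $\bar Y$ is the $\mathfrak{sl}_2$-component of $Y$. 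Hence $\|f\circ\pi\|_{\Cc^k}\le\|f\|_{\Cc^k}$ for a basis of $\mathrm{Lie}(G)$ adapted to $\mathfrak{sl}_2\oplus\R^2$, and $\ll_k\|f\|_{\Cc^k}$ for any basis. With equivariance and $\pi_*\mu=\mu'$, \corref{cor} for one fixed Diophantine $\xi$ then gives (EQ1) verbatim, in the norm already used for (S1)--(S6). The same pullback applied to \thmref{mixing} also gives (EQ2), without a separate appeal to the spectral gap of $\mathrm{SL}_2(\Z)$. As for the trade-off, your route is shorter and internal to the paper. However, it routes a classical statement through the much deeper effective Ratner theorem for $\mathrm{ASL}_2(\R)$, whose restriction to fibre-constant functions like $f\circ\pi$ is exactly the homogeneous estimate. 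It also yields only the inexplicit exponent $\delta_1$. The paper's route is logically more primitive and gives an explicit, better exponent in (EQ1). Neither difference matters for the statement, which only asks for some $\delta'_r>0$.
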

\begin{proof}
We can derive this similar to \thmref{multiple} by invoking effective (single) equidistribution of expanding horospheres on $G'/\Gamma'$  as stated in \cite[Section 5, eq (23)]{strom}. See \cite[Remark 3.4]{Andreasdeviationofergodic} on how to derive this version of effective single equidistribution. We can bound the error term by a polynomial power in the above version and use \cite[Theorem 2.1]{BMGA} to conclude.    
\end{proof}
Recall the definition of $w(\cdot)$ from \eqref{w defn}. Then by an analogous argument to \thmref{Poi law refined} one can derive the following:
\begin{thm}[Poisson convergence in $G'/\Gamma'$]\label{Poisson homocase}
 Let $F_n$ be a sequence of finite subsets of $A^{+}$ such that 
\begin{equation*}
    |F_n| \to \infty \quad \text{and} \quad \frac{w(F_n)}{\log(|F_n|)}\to \infty.
\end{equation*}
  Suppose $\{A_n\}_n \subset  G'/\Gamma'$ be a sequence of sets that are $K|F_n|^s$-smooth for all sufficiently large $n$. If there exists $m>0$ such that \begin{equation}\label{equ:homhyp2}
      \lim_{n \to \infty} \mu'(A_n)|F_n|=m,
  \end{equation}  then we have 
   $$N_n \xLongrightarrow[\textnormal{Leb}]{}\textnormal{Poi}(m),$$ i.e., $$\lim_{n\to \infty} \textnormal{Leb}\bigl(\bigl\{\alpha \in \mathbb{T}: \#\{h \in F_n: hu(\alpha)\Gamma' \in A_n\}=k\bigr\}\bigr)=\frac{m^ke^{-m}}{k!}.$$
\end{thm}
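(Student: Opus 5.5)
The plan is to copy the proof of \thmref{Poi law} and \thmref{Poi law refined}, replacing the input \thmref{multiple} by the effective multiple equidistribution theorem on $G'/\Gamma'$ stated just above. Everything will be reduced to the method-of-moments criterion \lemref{Poisson}. We take $(Z,\nu)=([0,1],\textnormal{Leb})$, $H=A^{+}$ and $A_{n,h}=\{\alpha: hu(\alpha)\Gamma'\in A_n\}$. It then suffices to show $\Lambda_{n,r}(m)\to 0$ for every fixed $r\geq1$.

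The first step is smoothing. Since $A_n$ is $K|F_n|^s$-smooth, we apply \lemref{lemma:2} on $L/\Lambda=G'/\Gamma'$ with $\delta_n:=|F_n|^{-(s+2)}$. This gives $\varphi_n^{\pm}\in C^\infty(G'/\Gamma')$ with the following properties:
\begin{itemize}
\item $0\leq\varphi_n^-\leq\chi_{A_n}\leq\varphi_n^+\leq1$;
\item $\|\varphi_n^\pm-\chi_{A_n}\|_{L^1(\mu')}\leq 2K|F_n|^{-2}$;
\item $\|\varphi_n^\pm\|_{\Cc^k}\leq C_k|F_n|^{k(s+2)}$.
\end{itemize}
Because $\Cc^j$-boundedness holds for all $j$, these functions lie in $C^\infty_b(G'/\Gamma')$. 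For each $F'\subset F_n$ with $|F'|=r$, the monotonicity of the approximants gives
\[
\int_0^1\prod_{h\in F'}\varphi_n^-(hu(\alpha)\Gamma')\,d\alpha\;\leq\;\textnormal{Leb}\Big(\bigcap_{h\in F'}A_{n,h}\Big)\;\leq\;\int_0^1\prod_{h\in F'}\varphi_n^+(hu(\alpha)\Gamma')\,d\alpha .
\]

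The second step applies the effective multiple equidistribution theorem with $\varphi\equiv1$, for which $\|\varphi\|_W=1$. The two outer integrals are then $\big(\int\varphi_n^\pm\,d\mu'\big)^r$ up to an error bounded by
\[
D'_r C_k^r|F_n|^{kr(s+2)}e^{-\delta'_r w(F')}\;\leq\; D'_r C_k^r|F_n|^{kr(s+2)}e^{-\delta'_r w(F_n)}.
\]
Here we use the monotonicity $w(F_n)\leq w(F')$. Multiplying by $|F_n|^r$ and using $\int\varphi_n^\pm\,d\mu'=\mu'(A_n)\pm O(|F_n|^{-2})$, we get
\[
\Lambda_{n,r}(m)\leq \max_\pm\Big|\big(|F_n|\mu'(A_n)\pm 2K|F_n|^{-1}\big)^r-m^r\Big| + D'_rC_k^r|F_n|^{r+kr(s+2)}e^{-\delta'_r w(F_n)} .
\]
The first term tends to $0$ by \eqref{equ:homhyp2}.

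The second term is the only point needing care. Since $w(F_n)/\log|F_n|\to\infty$, for large $n$ we have $\delta'_r w(F_n)\geq (r+kr(s+2)+1)\log|F_n|$. Hence the term is $O(|F_n|^{-1})\to0$. The exponent $s$ is fixed and only contributes a polynomial loss, so the lacunarity hypothesis absorbs it.

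We also need to check that the $\mathfrak D_r$ in the $G'/\Gamma'$ theorem is the same quantity as the one defining $w$ in \eqref{w defn}; it is, by definition. With both terms tending to zero, \lemref{Poisson} yields $N_n\Rightarrow\textnormal{Poi}(m)$, which is the stated limit.
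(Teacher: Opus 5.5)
Your proposal is correct and follows the paper's approach. The paper gives no separate proof here; it only says the result follows by running the proof of \thmref{Poi law refined} with the effective multiple equidistribution theorem on $G'/\Gamma'$ in place of \thmref{multiple}, and that is exactly what you do: smoothing via \lemref{lemma:2} with $\delta_n=|F_n|^{-(s+2)}$, taking $\varphi\equiv 1$, absorbing the polynomial loss $|F_n|^{r+kr(s+2)}$ using $w(F_n)/\log|F_n|\to\infty$, and concluding with \lemref{Poisson}.
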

\begin{remark}
    A version of \thmref{Poisson homocase} is already established in \cite{bjorklund2023poisson} by combining \cite[Theorem 4.4]{bjorklund2023poisson} and \cite[Theorem 6.4]{bjorklund2023poisson}. 
\end{remark}
\subsection{Weibull limits laws for minima and maxima}

Note that for $t\leq1$, and $T>r$ one has $k^{(r)}_{0}(\alpha, T) \leq t$ if and only if $a_Tu(\alpha)\Gamma' \in B^{(r)}_t$ (i.e., the lattice $a_Tu(\alpha)\Gamma'$ has  at least $2r+1$ lattice points in the region $\tilde{E}_t$). 

\begin{proof}[Proof of \thmref{homogeneous}]
    \underline{\textbf{Case 1:} $r\geq 1$ minima}\\   
    Given such $\Delta_n$ we set $F_n$ to be $F_n=\{a_T: T \in \Delta_n\}$. For any $t>0$, we set $A_n:=B^{(r)}_{t/|\Delta_n|}$. Then, for a fixed $t$, one has for large enough $n$ ( by \lemref{homvol}) $\mu'(A_n)=\frac{12}{\pi^2r^2}\frac{t}{|\Delta_n|}$. Thus, $$\lim_{n \to \infty} |\Delta_n|\mu'(A_n)=\frac{12}{\pi^2r^2}t.$$
    Moreover, the sets $A_n$ are $Kt^{-1}|\Delta_n|$ smooth by \lemref{smooth homcase}.
    Thus by \thmref{Poisson homocase}, one has \begin{equation}
        \lim_{n \to \infty} \textnormal{Leb}\bigg(\bigg\{\alpha \in \T: \#\{T \in \Delta_n: a_Tu(\alpha)\Gamma' \in A_n\}=k \bigg\}\bigg)=\frac{1}{k!}\left(\frac{12}{\pi^2r^2}t\right)^ke^{-\frac{12}{\pi^2r^2}t}.
    \end{equation}
    Summing over all $k\geq 1$ we have 
    \begin{equation}
        \lim_{n \to \infty} \textnormal{Leb}\bigg(\bigg\{\alpha \in \T: \#\{T \in \Delta_n: a_Tu(\alpha)\Gamma' \in A_n\}>0 \bigg\}\bigg)=1-e^{-\frac{12}{\pi^2r^2}t}.
    \end{equation}
    Note that for large enough $n$, the set $$\bigg\{\alpha \in \T: \#\{T\in \Delta_n:a_Tu(\alpha)\Gamma' \in A_n\}>0\bigg\}$$ is same as the set $$\bigg\{\alpha \in \T: \mathfrak{m}^{(r)}(\Delta_n; \alpha) \leq \frac{t}{|\Delta_n|}\bigg\}.$$
    Thus, 
    \begin{equation*}
   |\Delta_n|\mathfrak{m}^{(r)}(\Delta_n; \cdot)\xLongrightarrow[\textnormal{Leb}]{} \textnormal{Wei}\left(\frac{12}{\pi^2r^2}, 1\right) \; \textnormal{for all} \;r\geq1 . \end{equation*}
\underline{\textbf{Case 2:} $r=1$ maxima}\\
Note that for any $\Delta_n$, $\mathfrak{M}^{(1)}(\Delta_n; \alpha) \leq 1$ by Dirichlet's theorem. Hence, studying the maxima of $\mathfrak{M}^{(1)}(\Delta_n; \cdot)$, is the same as studying the minima of $1-\mathfrak{M}^{(1)}(\Delta_n; \cdot)$.

For any $t>0$, set $$A_n:=\left\{\Ll \in G'/\Gamma': \Ll \cap \tilde{E}_{1-\frac{t}{{\sqrt{|\Delta_n|\log(|\Delta_n|)}}}}=0\right\}.$$
By \lemref{homouppervol} one has \begin{align*}\lim_{n \to \infty}\frac{\mu(A_n)}{\frac{t^2}{|\Delta_n|\log(|\Delta_n|)}\log\left(\frac{|\Delta_n|^{1/2}\log(|\Delta_n|)^{1/2}}{t}\right)}=\frac{6}{\pi^2}.
\end{align*}
Thus, $$\lim_{n \to \infty}|\Delta_n|\mu(A_n)            =\frac{3t^2}{\pi^2}.$$
Also, for large enough $n$, the sets $A_n$ are uniformly $K$-smooth. Thus by \thmref{Poisson homocase}, one has \begin{equation}
        \lim_{n \to \infty} \textnormal{Leb}\bigg(\bigg\{\alpha \in \T: \#\{T \in \Delta_n: a_Tu(\alpha)\Gamma' \in A_n\}=k \bigg\}\bigg)=\frac{1}{k!}\left(\frac{3t^2}{\pi^2}\right)^ke^{-\frac{3t^2}{\pi^2}}.
    \end{equation}
    Summing over all $k\geq 1$ we have 
    \begin{equation}
        \lim_{n \to \infty} \textnormal{Leb}\bigg(\bigg\{\alpha \in \T: \#\{T \in \Delta_n: a_Tu(\alpha)\Gamma' \in A_n\}>0 \bigg\}\bigg)=1-e^{-\frac{3t^2}{\pi^2}}.
    \end{equation}
Note that for large enough $n$, the set $$\bigg\{\alpha \in \T: \#\{T\in \Delta_n:a_Tu(\alpha)\Gamma' \in A_n\}>0\bigg\}$$ is same as the set $$\bigg\{\alpha \in \T: \mathfrak{M}^{(1)}(\Delta_n; \alpha) \geq 1-\frac{t}{\sqrt{|\Delta_n|\log{(|\Delta_n|)}}}\bigg\}.$$

Thus 
\begin{equation*}
   |\Delta_n|^{1/2}\log(|\Delta_n|)^{1/2}(1-\mathfrak{M}^{(1)}(\Delta_n; \cdot))\xLongrightarrow[\textnormal{Leb}]{} \textnormal{Wei}\left(\frac{3}{\pi^2}, 2\right).
\end{equation*}
\end{proof}

\section{Logarithm laws}\label{sec:loglaw}

In this section, we derive \corref{cor1} and \corref{cor2}. In \cref{loglawsviaevl} we derive the logarithm laws via \thmref{thm:wei} and \thmref{homogeneous}, using arguments similar to \cite[Appendix A]{bjorklund2023poisson}. In \cref{loglawwithoutevl}, we illustrate how one can derive \cref{cor11} and \cref{cor12} without using the extreme value laws, just by invoking effective double equidistribution and divergence Borel-Cantelli.
 
\subsection{Logarithm laws from the extreme value laws}\label{loglawsviaevl}

We recall the \cite[Proposition A1]{bjorklund2023poisson}  from \cite[Appendix A]{bjorklund2023poisson}. We state a special case of \cite[Proposition A1]{bjorklund2023poisson}, to use it in our context.

Let $(Z, \nu)$ be a probability space with measurable functions $f_T$ on $Z$, with $T \in [1, \infty)$. Suppose that the minima of the functions have a limiting distribution over sufficiently lacunary subsets $\Delta_n \subset [1, \infty)$ i.e., for every sequence of finite subsets $\Delta_n \subset [1, \infty)$ that satisfies 
\begin{equation} \label{lac}
    |\Delta_n| \to \infty, \frac{{\min}_{T \neq T^{'} \in \Delta_n}\{|\log T -\log T'|\}}{\log|\Delta_n|} \to \infty, \frac{{\min}_{T \in \Delta_n}\{\log T\}}{\log|\Delta_n|} \to \infty \,\, \text{ as } n \to \infty.
\end{equation}
the minima 
\begin{equation*}
    \mathfrak{m}_{\Delta_n}:=\min\{f_T: T \in \Delta_n\}
\end{equation*}
satisfy 
\begin{equation*}
    |\Delta_n|^a(\log|\Delta_n|)^b \mathfrak{m}_{\Delta_n} \xLongrightarrow[\nu]{} \textnormal{Wei}(c,d)
\end{equation*}
for some $a, c, d>0$ and  $b\geq0$.

We recall the following lemma from \cite[Proposition A1]{bjorklund2023poisson}.
\begin{lem}\label{loglawlemma}
    Under the above assumptions, for every $\delta<1$ for $\nu$ almost every $z \in Z$ one has, $$\liminf_{T\to \infty} (\log(T))^{a\delta}f_{T}(z)=0.$$
\end{lem}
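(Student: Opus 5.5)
The plan is to apply the assumed Weibull law along one explicit family of lacunary sets $\Delta_n$. The sets are chosen so that on $\Delta_n$ the weight $(\log T)^{a\delta}$ is a negative power of $|\Delta_n|$ smaller than the normalisation $|\Delta_n|^a(\log|\Delta_n|)^b$. A Fatou-type argument for $\limsup$ of events then gives the almost sure statement. Throughout, $f_T\geq 0$, as for all functions $k^{(r)}_\xi(T;\cdot)$ and $1-k^{(1)}_0(T;\cdot)$ to which the lemma is applied. It therefore suffices to show $\liminf_{T\to\infty}(\log T)^{a\delta}f_T(z)\leq \varepsilon$ almost surely for each $\varepsilon=1/k$, and then intersect over $k$. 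We may also assume $0<\delta<1$, since decreasing $\delta$ only weakens the claim.

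\emph{Construction of $\Delta_n$.} Fix $\eta>0$ with $(1+\eta)\delta<1$, and put $\theta:=a(1-(1+\eta)\delta)>0$. Take integers $N_n\to\infty$ and set
\[
\Delta_n:=\{\exp(jN_n^{\eta}) : N_n\leq j\leq 2N_n\}.
\]
Then $|\Delta_n|\asymp N_n$. Consecutive values of $\log T$ differ by $N_n^{\eta}$, and $N_n^\eta/\log N_n\to\infty$. Also $\min_{T\in\Delta_n}\log T=N_n^{1+\eta}$, so $\min\log T/\log|\Delta_n|\to\infty$. Hence the lacunarity condition \eqref{lac} holds. For every $T\in\Delta_n$ we have
\[
(\log T)^{a\delta}\leq (2N_n^{1+\eta})^{a\delta}\ll |\Delta_n|^{a}\,|\Delta_n|^{-\theta}.
\]

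\emph{Probability estimate.} Let $E_n:=\{z: \min_{T\in\Delta_n}(\log T)^{a\delta}f_T(z)\leq\varepsilon\}$. By the bound above, $E_n$ contains
\[
\big\{z:\ |\Delta_n|^a(\log|\Delta_n|)^b\,\mathfrak m_{\Delta_n}(z)\leq c'\varepsilon|\Delta_n|^{\theta}(\log|\Delta_n|)^b\big\}.
\]
Fix any $M>0$. For $n$ large the threshold on the right exceeds $M$. Since $\mathrm{Wei}(c,d)$ is continuous, weak convergence gives
\[
\liminf_n\nu(E_n)\geq 1-e^{-cM^d}.
\]
Letting $M\to\infty$ yields $\nu(E_n)\to 1$.

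\emph{Almost sure conclusion.} By reverse Fatou, $\nu(\limsup_n E_n)\geq\limsup_n\nu(E_n)=1$. So almost every $z$ lies in infinitely many $E_n$. For such $z$ there are $T_n\in\Delta_n$ with $(\log T_n)^{a\delta}f_{T_n}(z)\leq\varepsilon$, and $T_n\to\infty$ because $\min\Delta_n\to\infty$. This gives the liminf bound $\leq\varepsilon$, and intersecting over $\varepsilon=1/k$ finishes the argument.

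No real obstacle is expected. The only delicate step is choosing $\eta$ so that lacunarity and the power saving $|\Delta_n|^{-\theta}$ hold at the same time; the parametrisation $\log T\in[N^{1+\eta},2N^{1+\eta}]$ with gaps $N^\eta$ handles both. It must also be noted that the weak convergence is to a continuous limit law, so the thresholds may be taken arbitrarily large.
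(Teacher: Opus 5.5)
Your argument is correct: the family $\Delta_n=\{\exp(jN_n^{\eta}): N_n\le j\le 2N_n\}$ satisfies the lacunarity condition \eqref{lac} while keeping $\max_{T\in\Delta_n}(\log T)^{a\delta}\ll|\Delta_n|^{a-\theta}$. The Weibull law therefore forces $\nu(E_n)\to 1$, and reverse Fatou upgrades this to an almost sure statement; nonnegativity of $f_T$, which you rightly make explicit, is only needed to turn $\liminf\le 0$ into $=0$. The paper itself gives no proof here and simply cites Bj\"orklund--Gorodnik's Proposition A1, but your argument is the same as the one the paper runs for \eqref{cor13} "analogous to" this lemma: there it uses Fatou by contradiction with $\Delta_n=\{\theta_n^s\}$, $\theta_n=l_n^{\log l_n}$, and your direct reverse-Fatou formulation is just the contrapositive of that.
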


\begin{proof}[Proof of \corref{cor1}]
    By \thmref{thm:wei} one has that the minima of the function $k^{(1)}_{\xi}(T;\alpha)$ satisfy 
    \begin{equation*}
   |\Delta_n|\mathfrak{m}^{(1)}_\xi(\Delta_n; \cdot)\xLongrightarrow[\textnormal{Leb}]{} \textnormal{Wei}(2, 1)
\end{equation*}
for sufficiently lacunary subsets $\Delta_n \subset [1, \infty).$ Thus by \lemref{loglawlemma} we have for all $\delta<1$, and almost every $\alpha \in \T$  $$\liminf_{T \to \infty} \log(T)^{\delta}k^{(1)}_\xi(T;\alpha)=0.$$

Thus we have for almost every $\alpha \in \T$ $$\limsup_{T \to \infty} \frac{-\log(k^{(1)}_\xi(T;\alpha))}{\log(\log(T))} \geq 1.$$

To prove the reverse inequality, observe that any $T$ large enough can be sandwiched between two powers of $2$, i.e., there exists $n \in \N$ such that $2^{n} \leq T < 2^{n+1}$. So, one has 
\begin{equation}\label{sandwich}
    2k_{\xi}^{(1)}(2^n;\alpha)\geq Td^{(1)}_{\xi}(2^n;\alpha) \geq k_{\xi}^{(1)}(T;\alpha) \geq T d^{(1)}_{\xi}(2^{n+1};\alpha) \geq \frac{1}{2} k_{\xi}^{(1)}(2^{n+1};\alpha)
\end{equation}

So it is enough to show the reverse inequality while going along the sequence $2^n \to \infty$.

\textbf{Claim:} For any $\varepsilon>0$ and $C>0$, the inequality $$k_{\xi}^{(1)}(2^{n};\alpha) \leq \frac{C}{n^{1+\varepsilon}}$$
holds only for finitely many $n$ for a.e. $\alpha \in \T$.
\begin{proof}[Proof of claim]
    Note that for large enough $n$, one has $k^{(1)}_{\xi}(2^n;\alpha)\leq\frac{C}{n^{1+\varepsilon}}$ if and only if $a_{2^n}u(\alpha)[1, -\bm{\xi}]\Gamma \in A^{(1)}_{\frac{C}{n^{1+\varepsilon}}}$. Let $h_n^{+}$ be a smooth approximation of the set $A^{(1)}_{\frac{C}{n^{1+\varepsilon}}}$ from above (using eventual $K$-smoothness of $A^{(1)}_{\frac{C}{n^{1+\varepsilon}}}$), by taking $\delta=\frac{1}{n^2}$ in \lemref{lemma:2}

    \begin{align}
\text{Leb}\left(\left\{\alpha \in \T: k_{\xi}^{(1)}(2^{n};\alpha) \leq \frac{C}{n^{1+\varepsilon}} \right\}\right) & \leq \int_{0}^{1} h_n^+(a_{2^n} u(x) [1_2, \boldsymbol{\xi}] \Gamma) \,  dx \notag \\
& \leq \int_{G/\Gamma} h_n^+ + {C} \|h_n^+\|_{C_b^8} 2^{-\delta_1 n} \notag \\
& \leq \mu(A_{\frac{C}{n^{1+\varepsilon}}}^{(1)}) + \frac{K}{n^2} + \frac{\tilde{C} n^{M}}{2^{\delta_1 n}} \notag\\
& \leq \frac{c}{n^{1+\varepsilon}} + \frac{K}{n^2} + \frac{\tilde{C} n^M}{2^{\delta_1 n}}
\label{1111}
\end{align}
Thus we have $$\sum_{n} \textnormal{Leb} \left(\left\{\alpha \in \T: k_{\xi}^{(1)}(2^{n};\alpha) \leq \frac{C}{n^{1+\varepsilon}} \right\}\right)<\infty.$$ So by the convergence case of the Borel-Cantelli Lemma we obtain that for almost every $\alpha \in \T$, the inequality $k_{\xi}^{(1)}(2^{n};\alpha) \leq \frac{C}{n^{1+\varepsilon}}$
holds only for finitely many $n$.
\end{proof}
Thus we have for all $\varepsilon>0$, a.e. $\alpha \in \T$  $$1+\varepsilon\geq\limsup_{n \to \infty}\frac{-\log(k^{(1)}_\xi(2^n;\alpha))}{\log(n)}=\limsup_{T \to \infty} \frac{-\log(k^{(1)}_{\xi}(T;\alpha))}{\log(\log(T))}.$$
Proving that for almost every $\alpha \in \T$
$$\limsup_{T \to \infty} \frac{-\log(k^{(1)}_\xi(T;\alpha))}{\log(\log(T))} = 1.$$

One can run a similar argument for $k^{(r)}_\xi(T;\alpha)$ for $r\geq 2$. 

\noindent To obtain $$\limsup_{T \to \infty} \frac{-\log(k^{(r)}_\xi(T;\alpha))}{\log(\log(T))} \geq \frac{1}{2}$$
one can again use \lemref{loglawlemma}. To prove the $\leq \frac{1}{2}$ statement one can give an analogous convergence case Borel-Cantelli lemma argument to conclude (noting that $\mu(A^{(r)}_t)\ll t^2$ for all $r\geq 2$ and $t$ small).
 
Now we prove \cref{cor13}. We give an argument analogous to proof of \lemref{loglawlemma} to get the non-trivial side of the logarithm law.\\

\textbf{Claim:} For all $\delta>0$ and any $C>0$, one has \begin{equation*}
    \text{Leb}\left(\left\{ \alpha \in \T: k^{(1)}_\xi(T;\alpha) \leq C(\log T)^{1-\delta} \;\text{for all large enough} \; T\geq T_{0}(\alpha) \right\}\right)=0
\end{equation*}
\begin{proof}[Proof of the Claim:] Assume the contrary for the sake of contradiction, i.e. there exists $\delta >0$ and $\varepsilon>0$ such that 
    \begin{equation}\label{assumption}
    \text{Leb}\left(\left\{ \alpha \in \T: k^{(1)}_\xi(T;\alpha) \leq C(\log T)^{1-\delta} \;\text{for all large enough} \; T\geq T_{0}(\alpha) \right\}\right)>\varepsilon.
\end{equation}
Then for a sequence of $\Delta_n$ such that \cref{lac} is satisfied one has 
\begin{multline}\label{contain}
    \left\{ \alpha \in \T: k^{(1)}_\xi(T;\alpha) \leq C(\log T)^{1-\delta} \;\text{for all large enough} \; T\geq T_{0}(\alpha) \right\} \subset \\ \left\{ \alpha \in \T: \mathfrak{M}_{\xi}^{(1)}(\Delta_n; \alpha) \leq C \max_{T\in \Delta_n}\{\log(T)\}^{1-\delta} \;\text{for all large enough} \; n \right\}. 
\end{multline}
Define $\Omega_n:=\{ \alpha \in \T: \mathfrak{M}_{\xi}^{(1)}(\Delta_n;\alpha) \leq C \max_{T \in \Delta_n}\{\log(T)\}^{1-\delta}\}$. Then \cref{contain} and our assumption \cref{assumption} gives $\text{Leb}(\liminf \Omega_n)>\varepsilon$, which implies $\liminf \text{Leb}(\Omega_n)>\varepsilon$ (Fatou's lemma). Therefore one has for all large enough $n$, $\text{Leb}(\Omega_n)>\varepsilon$. Let $u>0$ be chosen to be small enough such that $e^{-\frac{1}{u\pi^2}}<\varepsilon/2$. Now if we choose a sequence of $\Delta_n \subset [1, \infty)$ such that \cref{lac} is satisfied and such that 
\begin{equation}\label{lacc}
\frac{|\Delta_n|}{\max_{T \in \Delta_n}\{\log(T)\}^{1-\delta}} \to \infty,\end{equation}
then for large enough $n$ one has that $$\Omega_n \subset \{\alpha\in \T: \mathfrak{M}^{(1)}_\xi(\Delta_n;\alpha) \leq u |\Delta_n| \}.$$

However, $\text{Leb}(\{\alpha\in \T: \mathfrak{M}^{(1)}_\xi(\Delta_n;\alpha) \leq u |\Delta_n| \}) \to e^{-\frac{1}{u\pi^2}} <\varepsilon/2.$ We get a contradiction provided we show the existence of a sequence of $\Delta_n \subset [1, \infty)$ satisfying \cref{lac}
and \cref{lacc}. It is easy to verify that the choice of $\Delta_n:=\{\theta_n^{s}\;:1\leq s\leq l_n\}$, where we can set $\theta_n=l_n^{\log(l_n)}$ for a sequence $l_n \to \infty$, satisfies \cref{lac} and \cref{lacc}. \end{proof}

From the Claim, we conclude for almost every $\alpha \in \T$ and for every $\delta>0$ we have that the inequality $$k^{(1)}_\xi(T;\alpha) >C(\log T)^{1-\delta}$$ has infinitely many solutions. Taking $\log$ we have for a.e. $\alpha \in \T$ and every $\delta>0$ $$\limsup_{T \to \infty}\frac{\log(k^{(1)}_\xi(T;\alpha))}{\log(\log(T))}\geq 1-\delta.$$

Proving that for a.e. $\alpha \in \T$ one has 
$$\limsup_{T \to \infty}\frac{\log(k^{(1)}_\xi(T;\alpha))}{\log(\log(T))}\geq 1.$$ For the $\leq 1$ side one can give an analogous convergence case Borel-Cantelli argument as in the claim of the proof of \corref{cor1}. 
\end{proof}

\begin{proof}[Proof of \corref{cor2}]
        By \thmref{homogeneous} one has that the minima of the functions $k_{0}^{(r)}(T;\alpha)$ satisfy
        \begin{equation*}
   |\Delta_n|\mathfrak{m}^{(r)}(\Delta_n; \cdot)\xLongrightarrow[\textnormal{Leb}]{} \textnormal{Wei}\left(\frac{12}{\pi^2r^2}, 1\right) \; \text{for all} \; r\geq 1 
\end{equation*}
for sufficiently lacunary subsets $\Delta_n \subset [1, \infty)$. Thus by the \lemref{loglawlemma} we have for a.e. $\alpha \in \T$ 
$$\limsup_{T \to \infty} \frac{-\log(k^{(r)}_0(T;\alpha))}{\log(\log(T))} \geq 1.$$
By an argument analogous to the Claim of the previous lemma (noting that $\mu'(B^{(r)}_t) \ll t$), one has that for every $r\geq 1$, a.e. $\alpha \in \T$
$$\limsup_{T \to \infty} \frac{-\log(k^{(r)}_0(T;\alpha))}{\log(\log(T))} \leq 1.$$
To prove \cref{cor22}, note that the random variable $1-\mathfrak{M}^{(1)}(\Delta_n; \alpha)$ is the same as the random variable $\min_{T\in \Delta_n}\{1-k^{(1)}_0(T;\alpha)\}$, thus by \thmref{homogeneous} and \lemref{loglawlemma} we have for a.e. $\alpha \in \T$
$$\limsup_{T \to \infty} \frac{-\log(1-k^{(1)}_0(T;\alpha))}{\log(\log(T))} \geq \frac{1}{2}.$$To show that for a.e. $\alpha \in \T$ we have
$$\limsup_{T \to \infty} \frac{-\log(1-k^{(1)}_0(T;\alpha))}{\log(\log(T))} \leq \frac{1}{2}, $$
we do a similar convergence case Borel-Cantelli lemma as above: \\
\textbf{Claim:} For any $\varepsilon>0$ and $C>0$, the inequality $$1-k^{(1)}_{0}(2^n;\alpha)\leq \frac{C}{n^{1/2+\varepsilon}}$$ has only finitely many solutions for a.e. $\alpha \in \T$.
\begin{proof}[Proof of the claim]
    Note that $1-k^{(1)}_{0}(2^n;\alpha)\leq \frac{C}{n^{1/2+\varepsilon}}$ if and only if $$a_{2^n}u(\alpha)\Gamma' \in \left\{\Ll \in G'/\Gamma': \#\left(\Ll \cap \tilde{E}_{1-\frac{C}{n^{1/2+\varepsilon}}}\right)=1\right\}.$$

   Also, recall from \lemref{homouppervol} that $$\mu'\left(\left\{\Ll \in G'/\Gamma': \#\left(\Ll \cap \tilde{E}_{1-\frac{C}{n^{1/2+\varepsilon}}}\right)=1\right\}\right) \leq c\frac{\log(n)}{n^{1+2\varepsilon}}$$ for all large enough $n$. Thus, by imitating the estimates in proof of claim in previous lemma we get that  
   $$\sum_{n} \textnormal{Leb}\left( \left\{\alpha \in \T: 1-k_{0}^{(1)}(2^{n};\alpha) \leq \frac{C}{n^{1/2+\varepsilon}} \right\}\right)<\infty.$$
   Hence by the convergence case of Borel-Cantelli lemma one has the claim.
\end{proof}
By the claim we have that for any $C, \varepsilon>0$ one has for a.e. $\alpha \in \T$ for all large enough $n$,  $$1-k^{(1)}_0(2^n;\alpha)\geq \frac{C}{n^{1/2+\varepsilon}}.$$

Taking logs, we get for a.e. $\alpha \in \T$
$$\limsup_{T \to \infty} \frac{-\log(1-k^{(1)}_0(T;\alpha))}{\log(\log(T))} \leq \frac{1}{2}.$$

\end{proof}
\subsection{Log law without using the extreme value law}\label{loglawwithoutevl}

We introduce the following notation just for this subsection. Let $\theta>0$ be fixed and let $$A_n^{(1)}:=A^{(1)}_{\frac{1}{n^\theta}}, \quad A^{(2)}_n:=A^{(2)}_{\frac{1}{n^\theta}}.$$ Here $A^{(1)}_{\frac{1}{n^\theta}}$ and $A^{(2)}_{\frac{1}{n^\theta}}$ on the right hand side are the regions of $G/\Gamma$ as defined in \cref{eucregion} and \cref{regions}. 
The following is the main dynamical result of this subsection.

\begin{thm}\label{thm:main_dyn}
For $\theta \leq 1$ and $\xi \in \mathbb{T}$  of Diophantine type, we have for almost all $\alpha \in \mathbb{T}$ \begin{equation} a_{2^n} u(\alpha) \left[\mathrm{1}_2, \mt{\xi}0 \right] \Gamma \in A_n^{(1)} \label{eq:1} \end{equation} for infinitely many $n$. For $\theta>1$, we have for almost all $\alpha \in \mathbb{T}$ \cref{eq:1} holds only for finitely many $n$.\\

Similarly, for $\theta \leq \frac{1}{2}$ and $\xi \in \mathbb{T}$ of Diophantine type we have for almost all $\alpha \in \mathbb{T}$ \begin{equation} a_{2^n} u(\alpha) \left[\mathrm{1}_2, \mt{\xi}0 \right] \Gamma \in A_n^{(2)} \label{eq:2} \end{equation} for infinitely many $n$. For $\theta >\frac{1}{2}$, for almost all $\alpha \in \mathbb{T}$ \cref{eq:2} only holds for finitely many $n$. 
\end{thm}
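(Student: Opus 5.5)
The convergence halves follow from the convergence Borel--Cantelli argument already used in the Claim in the proof of \corref{cor1}. For the divergence halves I would use a quasi-independence (Kochen--Stone) estimate localized to an arbitrary interval, and then upgrade positive proportion on every interval to full measure via Lebesgue's density theorem. Set $x_n(\alpha):=a_{2^n}u(\alpha)[1_2,\bm{\xi}]\Gamma$ and $E^{(i)}_n:=\{\alpha\in\T: x_n(\alpha)\in A^{(i)}_n\}$ for $i=1,2$. By \lemref{r=1 vol} and \lemref{r>=2 vol},
\[
\mu(A^{(1)}_n)\asymp n^{-\theta},\qquad \mu(A^{(2)}_n)=\tfrac{12}{\pi^2}\cdot\tfrac34\, n^{-2\theta}.
\]
So $\sum_n\mu(A^{(i)}_n)$ diverges exactly when $\theta\le 1$ for $i=1$, and exactly when $\theta\le\frac12$ for $i=2$.

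\emph{Convergence side.} By \lemref{smooth}, the sets $A^{(i)}_n$ are uniformly $K$-smooth. Take $h^+_n$ from \lemref{lemma:2} with $\delta=n^{-2}$. Then \corref{cor} gives
\[
\mathrm{Leb}(E^{(i)}_n)\le \mu(A^{(i)}_n)+2Kn^{-2}+C n^{2k}2^{-\delta_1 n}.
\]
This is summable when $\theta>1$ (resp.\ $\theta>\frac12$), and the convergence Borel--Cantelli lemma finishes this half.

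\emph{Divergence side.} Fix an interval $I=[a,b]$. For $I$ I would use the smooth majorant $\varphi_\delta$ of \lemref{apr}, with $\|\varphi_\delta\|_W\ll\delta^{-1}$; for the lower bound I would apply the same lemma to a slightly shrunk interval, giving a minorant. I take $\delta$ fixed and small relative to $|I|$. Applying \thmref{multiple} with $r=1$ and the smooth approximants $h^\pm_n$ gives
\[
\mathrm{Leb}(E_n\cap I)=|I|\mu(A_n)+O(\delta\,\mu(A_n))+O(n^{-2}),
\]
since the equidistribution error $O(n^{2k}2^{-\delta_1 n})$ is negligible. For pairs $m<n$, \thmref{multiple} with $r=2$ gives
\[
\mathrm{Leb}(E_m\cap E_n\cap I)\le (|I|+O(\delta))\,\mu(A_m)\mu(A_n)+O(m^{-2})+O\bigl(n^{4k}\min(2^m,2^{n-m})^{-\delta_2}\bigr).
\]
The last error term is summably small once $m\ge C\log n$ and $n-m\ge C\log n$. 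For the remaining close pairs ($n-m<C\log n$, or $m<C\log n$) I would use only the trivial bound $\mathrm{Leb}(E_m\cap E_n\cap I)\le\mathrm{Leb}(E_n\cap I')$, where $I'$ is a slight enlargement of $I$. This contributes at most $\ll \sum_{n\le N}\mu(A_n)\log n$.

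With $S_N:=\sum_{n\le N}\mu(A_n)$, the Kochen--Stone inequality then gives
\[
\mathrm{Leb}\Bigl(\limsup_n E_n\cap I\Bigr)\ \ge\ \limsup_{N}\frac{\bigl(|I|S_N\bigr)^2}{|I|S_N^2+O\bigl(\sum_{n\le N}\mu(A_n)\log n\bigr)+O(S_N)}.
\]

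\emph{Main obstacle: the close pairs in the borderline cases.} The critical cases are $\theta=1$ for $i=1$ and $\theta=\frac12$ for $i=2$. There $S_N\asymp\log N$ while $\sum_{n\le N}\mu(A_n)\log n\asymp(\log N)^2$. The close-pair contribution is therefore of the same order as $S_N^2$, so I cannot get full measure directly. What I would aim for instead is a lower bound
\[
\mathrm{Leb}\Bigl(\limsup_n E_n\cap I\Bigr)\ge c\,|I|^2
\]
with $c>0$ independent of $I$. It is essential that the close-pair term is bounded by $\mathrm{Leb}(E_n\cap I')\approx|I|\mu(A_n)$, carrying a factor $|I|$, and not by $\mu(A_n)$. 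Then the bound becomes
\[
\frac{|I|^2S_N^2}{|I|S_N^2+C|I|S_N^2}\gg|I|,
\]
i.e.\ $\mathrm{Leb}(\limsup_n E_n\cap I)\ge\delta_0|I|$ for a uniform $\delta_0>0$. For $\theta$ strictly below the threshold the close-pair term is $o(S_N^2)$, and the ratio tends to $|I|$ directly.

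\emph{Conclusion.} Suppose the complement of $\limsup_n E_n$ had positive measure. It would then have a Lebesgue density point, hence an interval $I$ in which the complement occupies a proportion greater than $1-\delta_0$. This contradicts $\mathrm{Leb}(\limsup_n E_n\cap I)\ge\delta_0|I|$, so $\limsup_n E_n$ has full measure.
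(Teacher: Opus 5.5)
Your proposal is correct and follows essentially the same route as the paper. The convergent halves use the convergence Borel--Cantelli lemma via \thmref{stt}. The divergent halves use a divergence Borel--Cantelli estimate localized to an arbitrary interval, built from \thmref{multiple} with $r=2$ twisted by the Wiener-norm approximant of \lemref{apr}. Pairs are split into far pairs, controlled by $\mathfrak{D}_2=\min(2^m,2^{n-m})$, and $O(\log)$-close pairs, handled by the trivial bound. This yields a uniform positive proportion on every interval, and \lemref{Leb} concludes.

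Two small slips should be fixed in a write-up. First, your stated target should be $\delta_0|I|$, not $c|I|^2$: the latter does not suffice for the density argument, and $\delta_0|I|$ is what you actually derive. Second, the pair error written as $O(m^{-2})$ must keep its product form $O(m^{-2}\mu(A_n)+n^{-2}\mu(A_m)+m^{-2}n^{-2})$, as the paper does with its $b_j$. A bare $O(m^{-2})$ summed over all pairs $m<n\le N$ gives $O(N)$, not the $O(S_N)$ you use in the Kochen--Stone bound.
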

\begin{proof}[Proof of \cref{cor11} and \cref{cor12} assuming Theorem \ref{thm:main_dyn}]

Note that if $\xi$ is of Diophantine type, then so is $-\xi$. The affine lattice corresponding to the element $a_{2^n} u(\alpha) \left[\mathrm{1}_2, -\mt{\xi}0 \right] \Gamma$ is $$\Ll=\left\{\mt{2^np+2^n\alpha q-2^n\xi}{q2^{-n}}: p, q \in \Z \right\}.$$
\thmref{thm:main_dyn} implies for infinitely many $n$ it has  at least one point in the region $E_\frac{1}{n}$ which implies that there exists infinitely many $n$ such that $d_{\xi}^{(1)}(2^n; \alpha)\leq\frac{1}{n2^n}$. By taking logarithms and rearranging this implies that for almost all $\alpha$ $$\limsup_{n \to \infty} \frac{-\log(d_{\xi}^{(1)}(2^n; \alpha))-n\log2}{\log(n)} \geq 1.$$

Since for $\theta > 1$ by \thmref{thm:main_dyn}, for almost all $\alpha$ there are only finitely many $n$ such that $d_{\xi}^{(1)}(2^n;\alpha)\leq\frac{1}{n^{\theta}2^n}$, one concludes  $$\limsup_{n\to \infty} \frac{-\log(d_{\xi}^{(1)}(2^n;\alpha))-n\log2}{\log(n)} = 1.$$
Which gives, by a sandwiching argument similar to \cref{sandwich}, for a.e. $\alpha \in \T$ $$\limsup_{T\to \infty} \frac{-\log(k_\xi(T;\alpha))}{\log(\log(T))}=1.$$
Similarly, for $r=2$ we have that the affine lattice $\Ll$ has two points in $E_\frac{1}{\sqrt{n}}$ for infinitely many $n$ implies that for almost all $\alpha$ there exists $0\leq q_1< q_2 \leq 2^n$ such that $d(q_1\alpha, \xi) \leq \frac{1}{n^{1/2}2^n}$ and  $d(q_2\alpha, \xi) \leq \frac{1}{n^{1/2}2^n}$. Which is equivalent to saying that for almost all $\alpha$ there exists infinitely many $n$ such that $d_{\xi}^{(2)}(2^n;\alpha) \leq \frac{1}{n^{1/2}2^n}$, which by similar rearrangement and argument as above gives $$\limsup_{T \to \infty} \dfrac{-\log(k_\xi^{(2)}(T;\alpha))}{\log(\log(T))}=\frac{1}{2}.$$
For $r\geq3$, note that $d^{(r)}_\xi(T; \alpha) \geq d^{(2)}_\xi(T; \alpha)$. So, 
$$\limsup_{T \to \infty} \dfrac{-\log(k_\xi^{(r)}(T;\alpha))}{\log(\log(T))}\leq\frac{1}{2}.$$

Now to prove the reverse inequality notice that if $d^{(2)}_\xi(n;\alpha)=\varepsilon$, then there exists $1\leq q_1<q_2\leq n$ such that $d(q_1\alpha, \xi) \leq \varepsilon$ and $d(q_2\alpha, \xi) \leq \varepsilon$. Set $k=q_2-q_1$, then for $s=0, \ldots, r-1$, we have $d(q_1\alpha+sk\alpha, \xi) \leq (1+2s)\varepsilon$. Thus we have $d_{\xi}^{(r)}(rn;\alpha)\leq (2r-1)d_\xi^{(2)}(n;\alpha)$. Taking $\limsup$ we get for $r\geq3$
$$\limsup_{T \to \infty} \dfrac{-\log(k_\xi^{(r)}(T;\alpha))}{\log(\log(T))}=\frac{1}{2}.$$
\end{proof}

Let us now prove \thmref{thm:main_dyn}. Recall the statement of the divergence Borel-Cantelli lemma:

\begin{lem}\label{DBC}
Let $(X, \mathcal{A}, \mu)$ be a probability space. Let $\{E_i\}_{i=1}^{\infty}$ be a sequence of events in $X$ such that:
\begin{enumerate}
    \item $\sum_{i=1}^{\infty} \mu(E_i) = \infty$
    \item There exists a constant $C > 0$ such that 
    \[ \sum_{s,t=1}^{Q} \mu(E_s \cap E_t) \leq C \left( \sum_{s=1}^{Q} \mu(E_s) \right)^2 \] 
    for infinitely many $Q \in \mathbb{N}$.
\end{enumerate}
Then, $\mu(\limsup_{n} E_n) \geq \frac{1}{C}$.
\end{lem}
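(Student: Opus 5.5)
The plan is the standard second-moment (Cauchy--Schwarz/Paley--Zygmund) argument, applied to tails of the sequence so that the conclusion concerns $\limsup_n E_n$ rather than just $\bigcup_n E_n$.

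\textbf{Setup.} Write
\[
m_Q:=\sum_{s=1}^{Q}\mu(E_s).
\]
By hypothesis (1), $m_Q\to\infty$. Let $\mathcal Q\subset\mathbb N$ be the infinite set of $Q$ for which hypothesis (2) holds. For $N\le Q$ define the counting function
\[
S_{N,Q}:=\sum_{s=N}^{Q}\chi_{E_s}.
\]
Then $\{S_{N,Q}>0\}=\bigcup_{s=N}^{Q}E_s$.

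\textbf{Key step: a lower bound on each tail union.} Fix $N$ and take $Q\in\mathcal Q$ with $Q\ge N$. Since $S_{N,Q}=S_{N,Q}\chi_{\{S_{N,Q}>0\}}$, Cauchy--Schwarz gives
\[
\Big(\int S_{N,Q}\,d\mu\Big)^2\le \int S_{N,Q}^2\,d\mu\cdot \mu(S_{N,Q}>0).
\]
This is the same trick used in the proof of \lemref{r=1 vol}. The two sides are controlled as follows.
\begin{itemize}
\item $\int S_{N,Q}\,d\mu = m_Q-m_{N-1}$.
\item Since $0\le S_{N,Q}\le S_{1,Q}$ pointwise, hypothesis (2) gives
\[
\int S_{N,Q}^2\,d\mu\le\int S_{1,Q}^2\,d\mu=\sum_{s,t=1}^{Q}\mu(E_s\cap E_t)\le C\,m_Q^2 .
\]
\end{itemize}
Combining these,
\[
\mu\Big(\bigcup_{s=N}^{Q}E_s\Big)\ \ge\ \frac{(m_Q-m_{N-1})^2}{C\,m_Q^2}.
\]

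\textbf{Passing to the limit.} Let $Q\to\infty$ along $\mathcal Q$. Since $m_{N-1}$ is fixed and $m_Q\to\infty$, the right-hand side tends to $1/C$. Hence
\[
\mu(F_N)\ge 1/C,\qquad F_N:=\bigcup_{s\ge N}E_s .
\]
The sets $F_N$ decrease in $N$ and $\limsup_n E_n=\bigcap_N F_N$. Continuity from above of the finite measure $\mu$ then gives $\mu(\limsup_n E_n)=\lim_N\mu(F_N)\ge 1/C$.

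\textbf{Main point to watch.} There is no real obstacle here. The only care needed is to work with the tail sums $S_{N,Q}$ rather than $S_{1,Q}$, because the latter only bounds the measure of $\bigcup_n E_n$. The pointwise domination $S_{N,Q}\le S_{1,Q}$ is what lets hypothesis (2), which is stated only for full sums starting at $s=1$, be applied to the tails. The divergence $m_Q\to\infty$ is exactly what makes the finitely many discarded terms negligible.
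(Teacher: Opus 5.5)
Your proof is correct and complete. The tail Cauchy--Schwarz bound $\mu\bigl(\bigcup_{s=N}^{Q}E_s\bigr)\ge (m_Q-m_{N-1})^2/(C\,m_Q^2)$, where you dominate $S_{N,Q}^2$ by $S_{1,Q}^2$ so that hypothesis (2) applies, followed by letting $Q\to\infty$ along the admissible $Q$'s and continuity from above, is the standard second-moment (Kochen--Stone type) argument. The paper gives no proof of this lemma and only recalls it as the classical divergence Borel--Cantelli lemma, so there is no alternative route to compare against.
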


Now, for such a fixed $\theta$ and $\xi$ define the sets \begin{equation*} C_n := \left\{ x \in \mathbb{T} : a_{2^n} u(x) [1_2, \boldsymbol{\xi}] \Gamma \in A_n^{(1)} \right\} \end{equation*}
\begin{equation*}
    B_n := \left\{ x \in \mathbb{T} : a_{2^n} u(x) [1_2, \boldsymbol{\xi}] \Gamma \in A_n^{(2)} \right\}.
\end{equation*}
Also let $C_\infty=\limsup C_n$ and $B_\infty=\limsup B_n.$
    \begin{definition}
Given a sub interval $[\alpha, \beta] \subseteq [0, 1]$ let $\ell_{[\alpha, \beta]}$ be the normalised Lebesgue measure on the interval $[\alpha, \beta]$ defined as $\ell_{[\alpha, \beta]}(A):=\frac{\textnormal{Leb}(A \cap [\alpha, \beta])}{\beta - \alpha}$ for all measurable subsets $A \subseteq [0,1]$.\\
\end{definition}

\begin{lem}\label{int}
 If $\theta \leq1$, there exists a $\eta>0$ such that for all intervals $[\alpha, \beta] \subseteq [0, 1]$ we have $$\ell_{[\alpha, \beta]}(C_{\infty}) = \ell_{[\alpha, \beta]}(\limsup_n C_n) \geq \eta.$$

Similarly if $\theta \leq \frac{1}{2}$, there exists a $\eta'>0$ such that for all intervals $[\alpha, \beta] \subset [0,1]$ we have $$\ell_{[\alpha, \beta]}(B_{\infty}) = \ell_{[\alpha, \beta]}(\limsup_n B_n) \geq \eta'.$$

\end{lem}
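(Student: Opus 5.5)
We will apply the divergence Borel--Cantelli lemma (\lemref{DBC}) to the probability space $([0,1],\ell_{[\alpha,\beta]})$ and the events $C_n$ (resp.\ $B_n$). This gives $\ell_{[\alpha,\beta]}(C_\infty)\geq 1/C$. We must make sure the constant $C$ can be chosen independently of the interval $[\alpha,\beta]$; then $\eta=1/C$ works. The two statements are proved identically, with $A^{(1)}_n$, $\mu(A^{(1)}_n)\asymp n^{-\theta}$ (\lemref{r=1 vol}) replaced by $A^{(2)}_n$, $\mu(A^{(2)}_n)\asymp n^{-2\theta}$ (\lemref{r>=2 vol}). In each case the relevant series $\sum n^{-\theta}$ or $\sum n^{-2\theta}$ diverges exactly when $\theta\le 1$ or $\theta\le\frac12$ respectively.

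\textbf{Step 1 (first moments).} Let $\varphi_\delta$ be the smooth bump from \lemref{apr} adapted to $[\alpha,\beta]$. We sandwich $\chi_{[\alpha,\beta]}$ between two such bumps: the one from \lemref{apr}, and an inner one obtained by shrinking the interval by $2\delta$. For $\chi_{A_n}$ we use the approximants $f^\pm_{n}$ of \lemref{lemma:2} with $\delta=n^{-2}$; this is legitimate since the sets are uniformly $K$-smooth by \lemref{smooth}. Applying \thmref{stt}, or \thmref{multiple} with $r=1$, gives
\[
\ell_{[\alpha,\beta]}(C_n)=\mu(A_n)+O\!\Big(\tfrac{\delta}{\beta-\alpha}+n^{-2}+(\beta-\alpha)^{-1}\delta^{-1}n^{2k}2^{-\delta_1 n}\Big).
\]
Summing over $n\le Q$ and taking $\delta$ to be a small fixed fraction of $\beta-\alpha$, the error is $O_{[\alpha,\beta]}(1)$. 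Hence $\sum_{n\le Q}\ell_{[\alpha,\beta]}(C_n)=S_Q+O_{[\alpha,\beta]}(1)$ with $S_Q:=\sum_{n\le Q}\mu(A_n)\to\infty$. This gives hypothesis (1) of \lemref{DBC}, and the first moments are asymptotic to $S_Q$.

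\textbf{Step 2 (second moments).} For $s<t$ we use \thmref{multiple} with $r=2$, $T_1=2^s$, $T_2=2^t$, so that $\mathfrak{D}_2=\min\{2^s,2^{t-s}\}$. This yields
\[
\ell_{[\alpha,\beta]}(C_s\cap C_t)\le\mu(A_s)\mu(A_t)+O\!\big(s^{-2}+t^{-2}\big)+O_{[\alpha,\beta]}\!\big(s^{2k}t^{2k}\min\{2^s,2^{t-s}\}^{-\delta_2}\big).
\]
The last error is only useful when both $s$ and $t-s$ exceed $c\log t$ for a suitable constant $c$. For the remaining pairs we use the trivial bound $\ell(C_s\cap C_t)\le\ell(C_t)$. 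The pairs with $t-s\le c\log t$ contribute at most $\sum_t c\log t\cdot\mu(A_t)$, and the pairs with $s\le c\log t$ contribute similarly.

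\textbf{Main obstacle.} These near-diagonal terms are too large: $\sum_t \log t\cdot t^{-\theta}$ is not $O(S_Q^2)$ uniformly, at least at the borderline $\theta=1$. The same issue arises for the $O(s^{-2})$ smoothing errors unless $\delta$ is chosen as $n^{-\theta-1}$. We will handle this by a finer estimate for nearby times. For $t-s$ small, $C_s\cap C_t$ means that $q\alpha$ is close to $\xi$ at two scales. This is controlled by \emph{single} equidistribution, \thmref{stt}, applied to the function $\chi_{A_s}\cdot(\chi_{A_t}\circ a_{2^{t-s}})$, which is smooth at a cost polynomial in $2^{t-s}$. Its integral over $X$ is bounded by the Siegel/Rogers formulas: two points in thin boxes give $\ll\mu(A_s)\mu(A_t)+\mu(A_t)2^{-(t-s)}$. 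Alternatively, we could use the exponential mixing of \thmref{mixing} on $X$. Both routes give
\[
\ell(C_s\cap C_t)\ll\mu(A_s)\mu(A_t)+\mu(A_t)\,2^{-\kappa(t-s)}+\text{summable errors}.
\]
Summing over $s<t\le Q$ then gives $\sum_{s,t\le Q}\ell(C_s\cap C_t)\le C(S_Q^2+S_Q)+O_{[\alpha,\beta]}(1)\le C' S_Q^2$ for all large $Q$, where $C'$ depends only on $\xi$, the implied constants and $\theta$. Since $S_Q\to\infty$, the $[\alpha,\beta]$-dependent $O(1)$ errors are absorbed for large $Q$. So \lemref{DBC} applies with a uniform constant, and we obtain $\ell_{[\alpha,\beta]}(C_\infty)\ge\eta:=1/C'$. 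The case of $B_n$ is the same argument.
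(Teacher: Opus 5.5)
Your outline is the paper's: divergence Borel--Cantelli for $\ell_{[\alpha,\beta]}$, first moments from \thmref{stt}, \thmref{multiple} with $r=2$ for pairs where $s$ and $t-s$ are both $\gg\log Q$, and a cruder bound near the diagonal. However, your ``main obstacle'' is a miscalculation, and the fact you deny is exactly what closes the paper's proof.

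At $\theta=1$, \lemref{r=1 vol} gives $\mu(A^{(1)}_n)\asymp n^{-1}$. So $S_Q\asymp\log Q$, while $\sum_{t\le Q}(\log t)/t\sim\frac12(\log Q)^2\asymp S_Q^2$. In general, the pairs with $s\le c\log Q$ or $t-s\le c\log Q$ contribute, by the trivial bound, at most $2c\log Q\sum_{t\le Q}\ell_{[\alpha,\beta]}(C_t)$. For $\theta\le 1$ (resp.\ $\mu(A^{(2)}_n)\asymp n^{-2\theta}\ge n^{-1}$ when $\theta\le\frac12$), your first-moment estimate gives $\sum_{t\le Q}\ell_{[\alpha,\beta]}(C_t)\ge A\log Q$ for $Q\ge Q_0([\alpha,\beta])$, with $A$ independent of the interval. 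Hence the near-diagonal part is at most $(2c/A)\bigl(\sum_{t\le Q}\ell_{[\alpha,\beta]}(C_t)\bigr)^2$, uniformly in $[\alpha,\beta]$. That is all the paper does in its Cases 2 and 3. Your write-up needs this anyway. Your ``finer estimate'' only treats small $t-s$, while the pairs with $s\le c\log t$ are still bounded trivially. As written, the argument therefore relies on the very bound you declared insufficient.

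The Siegel--Rogers refinement is sound where it applies, but it is superfluous. It smooths $\chi_{A_s}\cdot(\chi_{A_t}\circ a_{2^{t-s}})$ at a $\mathcal{C}^k$ cost of $2^{O(t-s)}$, applies \thmref{stt} at time $2^s$, and bounds $\mu(A_s\cap a_{2^{t-s}}^{-1}A_t)$ by $\mathrm{Vol}(E_{s^{-\theta}})\mathrm{Vol}(E_{t^{-\theta}})+\mathrm{Vol}(E_{s^{-\theta}}\cap a_{2^{t-s}}^{-1}E_{t^{-\theta}})$. This only works when $s\gg\log t$. The mixing alternative fails in this range: for $t-s=O(\log t)$, e.g.\ $t-s=1$, exponential decay in $t-s$ cannot compensate Sobolev norms of the approximants that grow polynomially in $t$.

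Two bookkeeping points also need fixing.
\begin{enumerate}
\item In Step 2, the additive error $O(s^{-2}+t^{-2})$ summed over $\asymp Q^2$ pairs is $\asymp Q$, which is not $O(S_Q^2)$ once $\theta>\frac12$. Taking $\delta=n^{-\theta-1}$ does not change this. Keep the product form $(\mu(A_s)+O(s^{-2}))(\mu(A_t)+O(t^{-2}))$, as the paper does via its $b_j$. Since $n^{-2}\ll\mu(A_n)$, $\delta=n^{-2}$ then suffices.
\item In Step 1, the interval-smoothing error must be relative, $(1+O(\delta/(\beta-\alpha)))\mu(A_n)$. An additive $O(\delta/(\beta-\alpha))$ with $\delta$ a fixed fraction of $\beta-\alpha$ is not summable. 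In any case \thmref{stt} is already stated on subintervals, so no bump function is needed there.
\end{enumerate}
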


\begin{proof}
Note that the sets $A_n^{(1)}$s are $K'$-smooth for some $K'$. So by \lemref{smooth}, there exists a $K>0$ such that we can choose smooth functions $h_n^{\pm}$ such that 

\begin{equation}\label{smoothappro1}
    0\leq h_n^{-}\leq\chi_{A_n}\leq h_n^{+}\leq 1, \quad \|\chi_{A_n}-h_n^{\pm}\|_{L^1}\leq\frac{1}{n^2}, \quad \|h_{n}^{\pm}\|_{\mathcal{C}^k}\leq C' n^{K}.
\end{equation}
Note that \thmref{stt} gives us for all $n \in \mathbb{N}$ 
\begin{align}
\ell_{[\alpha, \beta]}(C_n) & \geq \frac{1}{\beta - \alpha} \int_{\alpha}^{\beta} h_n^-(a_{2^n} u(x) [1_2, \boldsymbol{\xi}] \Gamma) \,  dx \notag \\
& \geq \int_{G/\Gamma} h_n^- - \frac{C}{\beta - \alpha} \|h_n^-\|_{C_b^8} 2^{-\delta_1 n} \notag \\
& \geq \mu(A_n^{(1)}) - \frac{1}{n^2} - \frac{\tilde{C} n^K}{(\beta - \alpha) 2^{\delta_1 n}}. 
\label{1}
\end{align}

In particular, since for $\theta \leq 1$ the sum $\sum_{i=1}^\infty \mu(A_{i}^{(1)})$ diverges we have that $\sum_{i=1}^{\infty} \ell_{[\alpha, \beta]} (C_i)$ diverges for all $[\alpha, \beta] \subseteq [0,1]$.\\

Now, W.L.O.G. assume that $s\geq m > n \geq 1$. For an interval $[\alpha, \beta] \subset [0,1]$ take a smooth function $\varphi_\delta$ as in Lemma \ref{apr}, with $\delta=1/s^2$. Then using \thmref{multiple} for $r=2$ and the properties  \eqref{smoothappro1} of $h_n^{+}, h_m^{+}$ and $\varphi_\delta$ we have
\begin{align}
\ell_{[\alpha, \beta]}(C_n \cap C_m) & \leq \frac{1}{\beta - \alpha} \int_{0}^{1} \varphi_\delta(x) h_n^+(a_{2^n} u(x)[1_2, \boldsymbol{\xi}] \Gamma) \, h_m^+(a_{2^m} u(x) [1_2, \boldsymbol{\xi}] \Gamma) \, dx \notag \\
& \leq \frac{1}{\beta-\alpha} \int_{0}^1 \varphi_\delta \cdot\int_{G/\Gamma} h_n^+ \cdot \int_{G/\Gamma} h_m^+ + \frac{D\|\varphi_\delta\|_{W}}{(\beta - \alpha)} \|h_n^+\|_{\Cc^k} \|h_m^+\|_{\Cc^k} \, 2^{-\delta_2 \min \{m, \; n,\; m-n\}} \notag \\
& \leq \left(1+\frac{4}{s^2(\beta-\alpha)}\right) \left( \mu(A_n^{(1)}) + \frac{1}{n^2} \right) \left( \mu(A_m^{(1)}) + \frac{1}{m^2} \right) + \frac{C_1}{\beta - \alpha} s^2n^K m^K 2^{ - \delta_2 \min \{n, m-n\}}.
\label{2}
\end{align}
So from \eqref{1} and \eqref{2} we have
\begin{align*}
\ell_{[\alpha, \beta]}(C_n \cap C_m) 
    &\leq \left( 1+ \frac{4}{s^2(\beta-\alpha)} \right) \left( \ell_{[\alpha, \beta]}(C_n) + \frac{2}{n^2} + \frac{\tilde{C}}{\beta - \alpha} \frac{n^K}{2^{\delta_1 n}} \right) \left( \ell_{[\alpha, \beta]}(C_m) + \frac{2}{m^2} + \frac{\tilde{C}}{\beta - \alpha} \frac{m^K}{2^{\delta_1 m}} \right)\\ \notag
    &\qquad + \frac{C_1}{\beta - \alpha} s^2n^K m^K 2^{-\delta_2 \min \{n, m-n\}}. \notag
\end{align*}
Set for $j \in \N$, $b_j = \frac{1}{\beta-\alpha}\left[ \frac{2}{j^2} + \frac{j^K \tilde{C}}{2^{\delta_1 j}} \right].$
Giving us,
\begin{eqnarray*}
\ell_{[\alpha, \beta]}(C_n \cap C_m) - \ell_{[\alpha, \beta]}(C_n) \ell_{[\alpha, \beta]}(C_m) 
    &\leq& \left( 1+ \frac{4}{s^2(\beta-\alpha)}\right)\left( \ell_{[\alpha, \beta]}(C_n) b_m + \ell_{[\alpha, \beta]}(C_m) b_n + b_nb_m \right) \\
    &&\qquad + \frac{4}{s^2(\beta-\alpha)} + \frac{C_1}{\beta - \alpha} s^2 n^K m^K 2^{- \delta_2 \min \{n, \; m-n\}} .
\end{eqnarray*}
Take $s$ large enough so that $\frac{4}{s^2(\beta-\alpha)}\leq 1$. Since $s \geq m > n \geq 1$, we have
\begin{eqnarray}
\ell_{[\alpha, \beta]}(C_n \cap C_m) - \ell_{[\alpha, \beta]}(C_n) \ell_{[\alpha, \beta]}(C_m) &\leq& 2\ell_{[\alpha, \beta]}(C_n) b_m + 2\ell_{[\alpha, \beta]}(C_m) b_n +2b_nb_m\\ \notag
&& +\frac{C_1}{\beta - \alpha}s^{2K+2}2^{-\delta_2 \min \{n, m-n\}}+\frac{4}{s^2(\beta-\alpha)} \notag
\end{eqnarray}

\underline{\textbf{Case 1:} When $n \geq \frac{(2K+4)}{\delta_2} \log_2 s$ and $m-n \geq \frac{2K+4}{\delta_2} \log_2 s$:}\\

Then,
\[\ell_{[\alpha, \beta]}(C_n \cap C_m) - \ell_{[\alpha, \beta]}(C_n) \ell_{[\alpha, \beta]}(C_m) \leq 2\ell_{[\alpha, \beta]}(C_n) b_m + 2\ell_{[\alpha, \beta]}(C_m) b_n + 2b_nb_m+ \frac{(C_1+4)}{(\beta - \alpha)s^2}.
\]
So,
\begin{align*}
\sum_{\substack{s \geq m > n \geq 1 \\ m-n,\; n \geq \frac{(2K+4)}{\delta_2}{\log_2 s}}} \ell_{[\alpha, \beta]}(C_n \cap C_m) - \ell_{[\alpha, \beta]}(C_n) \ell_{[\alpha, \beta]}(C_m)
&\leq 4 \left( \sum_{i=1}^{s} b_i \right) \left( \sum_{i=1}^{s} \ell_{[\alpha, \beta]}(C_i) \right)\\
& \quad \quad + \dfrac{(C_1+4)}{\beta - \alpha}+ \left(\sum_{i=1}^s b_i\right)^2.
\end{align*}
Since $\sum_{i=1}^{\infty} \ell_{[\alpha, \beta]}(C_i) = \infty$ there exists $s_0$ (depending on $[\alpha, \beta]$) such that $\forall \, s \geq s_0$

\[
\sum_{i=1}^{s} \ell_{[\alpha, \beta]}(C_i) \geq \frac{1}{\beta - \alpha} \geq 1.
\]
Noting that $\sum_{j \in \N} b_j=\frac{c}{\beta-\alpha}$ for some $c>0$ independent of $[\alpha, \beta]$, we have for all large enough $s$
\[
\sum_{\substack{s \geq m > n \geq 1 \\ m-n, \; n \geq \frac{(2k+4)}{\delta_2} \log_2 s} } \ell_{[\alpha, \beta]}(C_n \cap C_m) -\ell_{[\alpha, \beta]}(C_n) \ell_{[\alpha, \beta]}(C_m) \leq K_1 \left[ \sum_{i=1}^{s} \ell_{[\alpha, \beta]}(C_i) \right]^2
\] where $K_1$ is independent of $\alpha$ and $\beta$.\\

\underline{\textbf{Case 2:} When $1 \leq n \leq \left( \frac{2K+4}{\delta_2} \right) \log_2 s$:}\\

Then using the trivial bound $\ell_{[\alpha, \beta]}(C_n \cap C_m) -\ell_{[\alpha, \beta]}(C_n)\ell_{[\alpha, \beta]}(C_m)\leq \ell_{[\alpha, \beta]}(C_n)$, we obtain
\[
\sum_{\substack{s \geq m > n \geq 1 \\ 1 \leq n \leq \left( \frac{2K+4}{\delta_2} \right) \log_2 s}} \ell_{[\alpha, \beta]}(C_n \cap C_m) - \ell_{[\alpha, \beta]}(C_n) \ell_{[\alpha, \beta]}(C_m) \leq \left( \frac{2K+4}{\delta_2} \right) \log_2 s \left[ \sum_{i=1}^{s} \ell_{[\alpha, \beta]}(C_i) \right].
\]\\

\underline{\textbf{Case 3:} When $1 \leq m-n \leq \left( \frac{2K+4}{\delta_2} \right) \log_2 s$:}\\

Similarly using the trivial bound we have
\[
\sum_{\substack{s \geq m > n \geq 1 \\ 1 \leq m-n \leq \left( \frac{2K+4}{\delta_2} \right) \log_2 s}} \ell_{[\alpha, \beta]}(C_n \cap C_m) - \ell_{[\alpha, \beta]}(C_n) \ell_{[\alpha, \beta]}(C_m) \leq \left( \frac{2K+4}{\delta_2} \right) \log s \left( \sum_{i=1}^{s} \ell_{[\alpha, \beta]}(C_i) \right).
\]

Note that for $\theta \leq 1$ we have
\[
\ell_{[\alpha, \beta]}(C_n) \geq \mu(A_n^{(1)}) - \frac{1}{n^2} - \frac{\tilde{C} n^K}{(\beta - \alpha) 2^{\delta_1 n}}
\]
\[
\sum_{i=1}^{s} \ell_{[\alpha, \beta]}(C_i) \geq \sum_{i=1}^{s} \mu(A_i^{(1)}) - C_{[\alpha, \beta]} \geq \sum_{i=1}^{s} \frac{\tilde{C}}{i} - C_{[\alpha, \beta]} \geq A \log_2 s \text{ for some } A > 0 \text{ for all large enough} \, s.
\]

So, combining case 1, 2 and 3 we have for all $[\alpha, \beta] \subseteq [0,1]$ for all sufficiently large $s$, $$\sum_{s \geq m>n\geq 1} \ell_{[\alpha, \beta]}(C_n \cap C_m) - \ell_{[\alpha, \beta]}(C_n) \ell_{[\alpha, \beta]}(C_m) \leq K' \left(\sum_{i=1}^s \ell_{[\alpha, \beta]}(C_i)\right)^2$$ where $K'$ is independent of $[\alpha, \beta]$. Now, using the divergence Borel-Cantelli (Lemma \ref{DBC}) we conclude the claim. A similar argument also goes through for $B_\infty$, when $\theta \leq \frac{1}{2}$.
\end{proof}

Recall the following lemma (see \cite[Lemma 1.6]{harman1998metric} for a proof), which is a consequence of Lebesgue's density theorem:

\begin{lem}\label{Leb}
    Let $A$ be a subset of $\R$ such that there exists a (uniform) $\delta>0$ such that $$\textnormal{Leb}(A \cap I)\geq\delta  \; \textnormal{Leb(I)}$$
    for every finite sub-interval $I \subset \R$. Then, $A$ is a set of full measure.
\end{lem}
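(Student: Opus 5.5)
The plan is to argue by contradiction through the Lebesgue density theorem, applied to the complement $B:=\R\setminus A$. Suppose $B$ has positive measure. By Lebesgue's density theorem, almost every point of $B$ is a density point. In particular there is some $x_0\in B$ with
$$\lim_{h\to 0^+}\frac{\textnormal{Leb}(B\cap[x_0-h,x_0+h])}{2h}=1.$$
To be safe about measurability of $A$, I will work with $A$ measurable, as it is in all our applications: $C_\infty$ and $B_\infty$ are $\limsup$ sets of measurable sets. If needed, I would replace $A$ by a measurable hull; the hypothesis concerns $\textnormal{Leb}(A\cap I)$, so I take $A$ measurable from the outset.

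Next, fix the uniform $\delta>0$ from the hypothesis. Choose $h>0$ small enough that $\textnormal{Leb}(B\cap I)>(1-\delta)\textnormal{Leb}(I)$, where $I=[x_0-h,x_0+h]$. Since $A\cap I$ and $B\cap I$ partition $I$, this gives
$$\textnormal{Leb}(A\cap I)=\textnormal{Leb}(I)-\textnormal{Leb}(B\cap I)<\delta\,\textnormal{Leb}(I).$$
This contradicts the hypothesis applied to the finite interval $I$. Hence $\textnormal{Leb}(B)=0$, so $A$ has full measure.

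The only genuine input is the Lebesgue density theorem. The step that needs care is quoting it correctly, using symmetric intervals centred at the density point, or any intervals shrinking to the point. Nothing else is delicate.

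For the application in the paper, where everything is inside $[0,1]$ or $\T$, the same argument works with intervals $I\subset[0,1]$. Combined with \lemref{int}, taking $\eta$ as the uniform $\delta$, this yields $\textnormal{Leb}(C_\infty)=1$ for $\theta\le 1$ and $\textnormal{Leb}(B_\infty)=1$ for $\theta\le\frac12$, which is the divergence half of \thmref{thm:main_dyn}.
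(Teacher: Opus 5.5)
Your proof is correct and is the standard Lebesgue density argument; the paper gives no proof of its own and simply cites \cite[Lemma 1.6]{harman1998metric} as a consequence of Lebesgue's density theorem, which is exactly the route you take. One small caution: drop the measurable-hull remark. For non-measurable $A$ (e.g.\ a Bernstein set, which has full outer measure in every interval) the outer-measure version of the hypothesis does not give full measure, so measurability of $A$ is implicitly required by the statement. That is harmless here, since $C_\infty$ and $B_\infty$ are measurable.
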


\begin{proof}[Proof of Theorem \ref{thm:main_dyn}]
\underline{\textbf{Divergence case:}}\\
Note that $C_\infty$ and $B_\infty$ are sets that are 1 periodic when viewed as subset of $\R$. By Lemma \ref{int}, we obtain that for $\theta \leq 1$ for any interval $I$ of $\R$ we have $\textnormal{Leb}(C_\infty \cap I) \geq \eta \; \textnormal{Leb}(I)$. Thus by Lemma \ref{Leb} we obtain $C_\infty$ is a set of full measure for $\theta \leq 1$. A similar argument works for showing that the set $B_\infty$ is a set of full measure for $\theta \leq \frac{1}{2}$.

\noindent \underline{\textbf{Convergence case:}}\\
We have by Theorem \ref{stt}

\begin{align}
\ell_{[0,1]}(C_n) & \leq \int_{0}^{1} h_n^+(a_{2^n} u(x) [1_2, \boldsymbol{\xi}] \Gamma) \,  dx \notag \\
& \leq \int_{G/\Gamma} h_n^+ + {C} \|h_n^+\|_{C_b^8} 2^{-\delta_1 n} \notag \\
& \leq \mu(A_n^{(1)}) + \frac{1}{n^2} + \frac{\tilde{C} n^K}{2^{\delta_1 n}} \notag\\
& \leq \frac{c}{n^{\theta}} + \frac{1}{n^2} + \frac{\tilde{C} n^K}{2^{\delta_1 n}} \nonumber
\end{align}
Thus when $\theta>1$, we have $\sum_{n} \ell_{[0,1]}(C_n)<\infty$. So by the convergence case of Borel-Cantelli Lemma we obtain that $\textrm{Leb}(C_\infty)=0$. Similar argument for $B_\infty$ shows that if $\theta>\frac{1}{2}$ we have $\textrm{Leb}(B_\infty)=0$.

\end{proof}
\bibliographystyle{plain}
\nocite{*}
\bibliography{reference.bib}

\end{document}